\newcommand{\abs}[1]{\vert #1 \vert}
\newcommand{\norm}[1]{\| #1 \|}
\newcommand{\parent}[1]{\left(#1\right)}
\newcommand{\parentbig}[1]{\biggl(#1\biggr)}
\newcommand{\set}[1]{\bigl\{#1\mathclose{}\bigr\}}
\newcommand{\bigo}[1]{\mathcal{O}\mathopen{}\left(#1\right)}
\newcommand{\japbrak}[1]{\langle#1\rangle}
\newcommand{\interval}[4]{\mathopen{#1}#2\mathclose{}\mathpunct{},#3\mathclose{#4}}
\newcommand{\intervalcc}[2]{\interval{[}{#1}{#2}{]}}
\newcommand{\R}{\mathbb{R}}
\newcommand{\Z}{\mathbb{Z}}
\newcommand{\N}{\mathbb{N}}
\newcommand{\T}{\mathbb{T}}
\newcommand{\classeC}{\mathcal{C}}
\renewcommand{\epsilon}{\varepsilon}
\DeclareMathOperator{\supp}{supp}
\renewcommand\d{\,{\mathrm d}}
\newcommand\e{\,{\mathrm e}}
\newcommand{\longrightarroww}[2] {\mathop{\longrightarrow}\limits_{#1}^{#2}}
\newtheorem{mydef}{Definition}[section]
\newtheorem{thm}[mydef]{Theorem}
\newtheorem{lem}[mydef]{Lemma}
\newtheorem{prop}[mydef]{Proposition}
\newtheorem{cor}[mydef]{Corollary}
\theoremstyle{remark}
\newtheorem{rk}[mydef]{Remark}
\newtheorem*{hyp*}{Assumption}
\def\subsubsection{\@startsection{subsubsection}{4}%
  \z@\z@{-\fontdimen2\font}%
  {\newline\normalfont\bfseries}}
\title[Pathological set for NLS]{\scshape{\textbf{Pathological set of initial data for scaling-supercritical nonlinear Schrödinger equations}}}
\author{Nicolas Camps}
\address{Universit\'e Paris-Saclay, Laboratoire de mathématiques d'Orsay, UMR 8628 du CNRS, B\^atiment 307, 91405 Orsay Cedex,}
\email{nicolas.camps@universite-paris-saclay.fr}
\author{Louise Gassot}
\address{ICERM, Brown University, 121 South Main Street, Providence, RI 02903, USA
}
\email{louise.gassot@normalesup.org}
\subjclass[2010]{35A01 primary}
\keywords{Nonlinear Schrödinger equation, Cauchy theory, Ill-posedness, Norm inflation}
\date{\today}
\begin{document}

\begin{abstract}

The purpose of this work is to evidence a pathological set of initial data for which the regularized solutions by convolution experience a norm-inflation mechanism, in arbitrarily short time. The result is in the spirit of the construction from Sun and Tzvetkov~\cite{SunTzvetkov2020pathological}, where the pathological set contains superposition of profiles that concentrate at different points. Thanks to finite propagation speed of the wave equation, and given a certain time, at most one profile exhibits significant growth. However, for Schrödinger-type equations, we cannot preclude the profiles from interacting between each other. Instead, we propose a method that exploits the regularizing effect of the approximate identity which, at a given scale, rules out the norm inflation of the profiles that are concentrated at smaller scales.
\end{abstract}

\ \vskip -1cm  \hrule \vskip 1cm \vspace{-0.5cm}
 \maketitle 
{ \textwidth=4cm \hrule}

\maketitle

\tableofcontents
\vspace{-1.5cm}
\section{Introduction}

Our model will be the semi-linear Schrödinger equation on $\R^3$ 
\begin{equation}\label{eq:NLS}
\tag{NLS}
i\partial_t u+\Delta u+\sigma|u|^{p-1}u=0\,,\quad (t,x)\in\R\times\R^3\,,\quad p>1\,,
\end{equation}
where $\sigma=1$ in the focusing case and $\sigma=-1$ in the defocusing case, respectively. Our main result is the description of a {\it pathological set} for equation~\eqref{eq:NLS}, based on the approach of Sun and Tzvetkov~\cite{SunTzvetkov2020pathological} for the nonlinear wave equation. We prove that if we regularize the initial data using a given convolution profile, there exists a dense set of initial data in~$H^s(\R^3)$ such that the regularized solutions exhibit norm inflation.  The precise statement can be found in Theorem~\ref{thm-NLS} below. Such a result aims at precising the probabilistic well-posedness theory, summarized in our context in Theorem~\ref{theo-lwp}. Let us first recall the motivation for this result. In the study of the Cauchy theory for dispersive equations, one generally encounters a critical regularity threshold that dictates whether the equation is well-posed or not, in the Hadamard sense. We recall that a general dispersive equation on a Riemannian manifold $(M,g)$ can be written under the form
\begin{equation}
\label{eq:disp}
i\partial_t u + Lu = F(u)\,,\quad u(0)\in H^s(M)\,,  
\end{equation}
where $L$ is a well-defined operator on $H^s(M)$ such that the linear propagator  $\set{\exp(itL)}_{t\in\R}$ is a group of isometries of $H^s(M)$, whereas the nonlinear interaction, denoted $F(u)$, has to make sense in $H^s(M)$. Given a time interval $I\subset \R$ containing $t_0$, the curve $t\in I\subset\R\mapsto u(t)\in C(\intervalcc{t_0}{T},H^s(M))$ is a strong solution to~\eqref{eq:disp}, with initial condition $u(t_0)=u_0\in H^s(M)$, if it satisfies the following Duhamel integral representation formula, in the distributional sense
\begin{equation}
    \label{eq:duhamel}
    u(t) = \e^{itL}u_0 -i\int_{t_0}^t\e^{i(t-\tau)L}F(u(\tau))d\tau\,.
\end{equation}
For dispersive equations, a suitable notion of well-posedness, that we recall here, can be found in~\cite{TzvetkovIll2007}.
\begin{mydef}[Local well-posedness]
\label{def:wp}
Given a general dispersive equation as~\eqref{eq:disp}, we say that the Cauchy problem is well-posed in $H^s(M)$ if for every bounded set $B\subset H^s(M)$, there exist $T=T(B)>0$ and a Banach space $X_T$ continuously embedded in $C(\intervalcc{t_0}{T}, H^s(M))$ such that for any $u_0\in H^s(M)$, there exists a solution $u\in X_T$ satisfying the Duhamel integral representation formula~\eqref{eq:duhamel}. Moreover, the following holds.
\begin{enumerate}
    \item (Uniqueness)~\footnote{This statement is sometimes referred as \emph{conditional uniqueness}, whereas unconditional uniqueness in $H^s(M)$ holds when it is possible to prove uniqueness in $C(\intervalcc{t_0}{T},H^s)$ and not just in $X_T$.} Let $u_1, u_2\in X_T$ satisfying~\eqref{eq:duhamel}. If, for some $t_0\leq t\leq T$, $u_1(t)=u_2(t)$, then $u_1=u_2$. 
    \item (Continuity) The flow map $u_0\in B\mapsto u\in X_T$ is continuous.
    \item (Persistence of regularity) If $u_0\in H^\sigma(M)$ with $\sigma>s$, then $u\in C(\intervalcc{t_0}{T}),H^\sigma(M))$.
\end{enumerate}
In addition, when the flow-map is uniformly continuous, we say that the Cauchy problem is semi-linearly well-posed.
\end{mydef}
This definition deserves some comments, especially in the case of critical regimes where the time of existence does not only depend on the bounded set $B$ but also on the profile of the initial data. In Definition~\ref{def:wp}, we ask for some persistence of regularity in order to see the solutions in $H^s(M)$ as limits of regularized smooth solutions, and this motivates the study of the Cauchy problem at low regularity. A natural question that arises is whether the flow map is always uniformly continuous, as soon as it is continuous. In other words, is the critical threshold for the well-posedness equals to the critical threshold for the semi-linear well-posedness ? In general, the answer is no, especially in the context of KdV type equations, see~\cite{Bourgain1993KdV, TakaokaTsutsumi2004}.  We refer to~\cite{TzvetkovIll2007} for discussions on this  question, and for further comments on Definition~\ref{def:wp}.  

Sometimes, an invariance property of the equation under the action of a group of symmetries indicates the aforementioned well-posedness threshold. The first example, that we exploit in this work, is the \textit{scaling} symmetry. If an equation is invariant under a scaling symmetry
\[
u(t,x) \longmapsto u_\lambda(t,x) :=\lambda^{\alpha} u(\lambda^\beta t,\lambda^\gamma x)\,,\quad \lambda>0\,,
\]
then, in the scale of homogeneous Sobolev spaces, its critical regularity $s_c$ corresponds to the Sobolev norm left invariant under the action of the scaling 
\[
\norm{u_\lambda(0,x)}_{\dot{H}^{s_c}} = \norm{u(0,\cdot)}_{\dot{H}^{s_c}}\,.
\]
We expect well-posedness at \emph{scaling-subcritical} regularities $s\geq s_c$, and ill-posedness at \emph{scaling-supercritical} regularities $s<s_c$. Note, however, that this heuristic is quite informal and in general, there might exist other properties of the equation that actually dictate the well-posedness threshold.

For instance, in the case of integrable equations, the scaling, semi-linear and well-posedness thresholds often differ. A classic example is the KdV equation on $\T$, which is continuously well-posed in $H^s(\T)$ if $s\geq -1$, but the solution map is locally uniformly continuous only when $s\geq -\frac{1}{2}$ (in which case it is real analytic). Besides, the next equation in the KdV hierarchy is continuously well-posed for $s\geq -\frac{1}{2}$, but the solution map is nowhere locally uniformly continuous for this range of exponents~\cite{KappelerMolnar2018kdv2}, whereas these equations are invariant under the scaling transformation with critical Sobolev index $s_c=-\frac{3}{2}$. Similarly, the Benjamin-Ono equation on $\T$ is continuously well-posed in $H^s(\T)$ for $s>-\frac{1}{2}$~\cite{GerardKappelerTopalov2020}, the flow map is locally uniformly continuous only when  $s\geq 0$ (in which case it is real analytic)~\cite{GerardKappelerTopalov2021analyticity},. The next equation in the Benjamin-Ono hierarchy is continuously well-posed for $s\geq 0$~\cite{bo_hierarchie}, but an adaptation of~\cite{KappelerMolnar2018kdv2} implies that the flow map is nowhere uniformly continuous. For all the equations in the Benjamin-Ono hierarchy, the scaling-critical Sobolev exponent is $s_c=-\frac{1}{2}$. Geometry also influences the dispersion property of the equation, for instance the NLS equation on $\T$, with scaling-critical Sobolev exponent $s_c=-\frac{1}{2}$, is ill-posed for $s<0$~\cite{ChristCollianderTao2003instability} but continuously well-posed when $s\geq 0$ with a smooth solution map~\cite{Bourgain1993}. In dimension $2$, NLS is well-posed on $\T^2$ for $s_c=0<s$, whereas the flow map of NLS on the sphere $\mathbb{S}^2$ is not uniformly locally well-posed as soon as $s<\frac{1}{4}$, see~\cite{Banica04Sphere,BGT2002Insta}. We also point out that the flow map for the NLS equation on the Heisenberg group cannot be $\classeC^3$ in $H^s(\mathbb{H}^1)$ for $s<2$, whereas the scaling-critical exponent is $s_c=1$, see for instance~\cite{GerardGrellier2010}.
 
For the nonlinear Schrödinger equation~\eqref{eq:NLS}, as well as for the nonlinear wave equation in $\R^3$ or $\T^3$, the scaling symmetry 
\[
u\longmapsto u_{\lambda}(t,x)=\lambda^{\frac{2}{p-1}} u(\lambda^2 t,\lambda x)
\]
actually dictates the local well-posedness threshold, which is independent of the focusing or defocusing nature of the equation, see~\cite{ChristCollianderTao2003norm}.

\begin{thm}[Well-posedness~\cite{CazenaveWeissler1990,ChristCollianderTao2003norm}]
Let $p>1$.~\footnote{If $p$ is not an odd integer we further assume that $\lfloor s \rfloor + 1 < p$, so that the nonlinearity makes sense in $H^s$. }
\begin{itemize}
    \item The Cauchy problem associated to~\eqref{eq:NLS} is well-posed  in $H^s(\R^3)$ in the sense of Definition~\ref{def:wp} when $\max(0,s_c)<s$.
    \item When $0\leq s_c$, the Cauchy problem is well-posed in $H^{s_c}(\R^3)$ but the time of existence does depend on the initial data and  not just on its $H^{s_c}$-norm.
    \item When $s<s_c$ or when $s<0$, the Cauchy problem is ill-posed. 
\end{itemize}
\end{thm}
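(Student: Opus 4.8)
The plan is to treat the three items separately, each time working from the Duhamel formulation~\eqref{eq:duhamel} with $L=\Delta$, $F(u)=-\sigma|u|^{p-1}u$ and $t_0=0$, and setting $\Phi(u)(t)=\e^{it\Delta}u_0+i\sigma\int_0^t\e^{i(t-\tau)\Delta}|u|^{p-1}u\,d\tau$. For the subcritical range $\max(0,s_c)<s$, I would run a contraction argument for $\Phi$ in a space $X_T$ built from the admissible Strichartz norms $L^q_tL^r_x$ (and their $\dot H^s$-valued analogues) intersected with $C(\intervalcc{0}{T},H^s(\R^3))$. The two analytic inputs are the Strichartz estimates for $\e^{it\Delta}$ on $\R^3$, which bound $\norm{\Phi(u)}_{X_T}$ by $\norm{u_0}_{H^s}$ plus a dual-Strichartz norm of $|u|^{p-1}u$, and a nonlinear estimate for that norm via the fractional Leibniz (Kato--Ponce) rule together with Sobolev embedding; because $s>s_c$, Hölder in time extracts a positive power $T^{\theta}$, $\theta=\theta(s,p)>0$, from this estimate. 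That gain turns $\Phi$ into a contraction on a ball of $X_T$ once $T=T(\norm{u_0}_{H^s})$ is small, giving existence and conditional uniqueness in $X_T$; Lipschitz dependence of $\Phi$ on $u_0$ in the same norms yields (uniform, hence semilinear) continuity of the flow map, and rerunning the fixed point at regularity $s'>s$ with the same $T$ gives persistence of regularity.

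When $s=s_c\geq 0$ the same scheme applies, but the nonlinear estimate is now scaling invariant and carries no $T^{\theta}$ factor. Instead one closes the contraction on the ball of radius $\sim\norm{\e^{it\Delta}u_0}_{X_T}$, using that this Strichartz norm of the free evolution tends to $0$ as $T\to0$ for each fixed $u_0\in H^{s_c}(\R^3)$ (approximate $u_0$ by Schwartz data and apply dominated convergence). Hence $T$ depends on the profile of $u_0$ and not only on $\norm{u_0}_{H^{s_c}}$, and the small-data global theory follows from the same estimate with $T=+\infty$. Testing the critical Strichartz norm of $\e^{it\Delta}u_0$ against a family of rescaled, translated bubbles shows that it genuinely cannot be controlled by $\norm{u_0}_{H^{s_c}}$ alone, so the profile dependence is real.

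For the ill-posedness, in the range $0\leq s<s_c$ I would use the small-dispersion norm-inflation mechanism of Christ--Colliander--Tao: take data $u_0^{\lambda}$ of the form $\lambda^{a}\phi(\lambda\,\cdot\,)$, possibly modulated, so that under the scaling $u\mapsto\lambda^{2/(p-1)}u(\lambda^2 t,\lambda x)$ the data has $H^s$ norm $\sim\lambda^{s-s_c}\to0$; on very short times the term $\Delta u$ is negligible against the nonlinearity, so $u$ is well approximated by the solution of the pointwise ODE $i\partial_t v=-\sigma|v|^{p-1}v$, namely $v(t,x)=\e^{i\sigma t|u_0^{\lambda}(x)|^{p-1}}u_0^{\lambda}(x)$, whose $H^s$ norm grows like a positive power of $t$. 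Transferring this growth through a rigorous estimate on $u-v$ produces solutions with arbitrarily small data and arbitrarily large $H^s$ norm at arbitrarily short times, contradicting continuity at $0$. For $s<0$ the obstruction is instead the Galilean invariance of~\eqref{eq:NLS}: boosting a fixed profile to frequency $N$ keeps the $H^s$ norm of the data $\sim N^{s}\to0$ but makes two such data, close in $H^s$, evolve into solutions that decohere and separate in $H^s$ after a time $\to0$, so the flow map is not even locally uniformly continuous; a variant of the same computation yields outright norm inflation.

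The most delicate step is the supercritical ill-posedness $0\leq s<s_c$: one must make quantitative the heuristic that dispersion is negligible at small scales, i.e. control the Duhamel remainder $u-v$ in a norm strong enough to detect the $H^s$ inflation of $v$ yet weak enough to be closed by Strichartz or energy estimates on the (very short) lifespan, and balance the scaling exponent $a$ and the amplitude so that this error is beaten by the main term. By contrast, the well-posedness parts are a careful but routine application of Strichartz theory and fractional calculus, the only genuine subtlety being the profile dependence of the lifespan in the critical case.
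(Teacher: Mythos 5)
The paper does not prove this theorem; it is recalled as background and cited directly to \cite{CazenaveWeissler1990,ChristCollianderTao2003norm}, so there is no internal proof to compare against. Your reconstruction is nevertheless faithful to the standard literature: the contraction-mapping scheme in Strichartz spaces, with the $T^{\theta}$ gain at subcritical regularity and the profile-dependent lifespan coming from the vanishing of $\|\e^{it\Delta}u_0\|_{X_T}$ as $T\to 0$ at criticality, is exactly the Cazenave--Weissler theory; and the small-dispersion/ODE-bubble analysis for supercritical $0\leq s<s_c$ is the Christ--Colliander--Tao norm-inflation mechanism, which Section~2 of the present paper adapts and makes quantitative (with convolution) in its own setting.

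One imprecision worth flagging in your last item: in \cite{ChristCollianderTao2003norm} the Galilean-boost argument establishes failure of \emph{uniform} continuity of the flow map for all $s<0$, whereas outright norm inflation (the high-to-low frequency cascade) is proved there only for $s<-d/2$. Norm inflation throughout the negative range requires the later work (Kishimoto, Oh) that the paper cites in a separate remark, and those arguments are not simple variants of the Galilean computation. So your sentence ``a variant of the same computation yields outright norm inflation'' overstates what the Galilean mechanism delivers; the ill-posedness claim for $s<0$ is still correct, but you should distinguish failure of uniform continuity from norm inflation when attributing the mechanism.
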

We comment on the ill-posedness result due to~\cite{ChristCollianderTao2003norm} in the next paragraph. Before that, we recall that the mass and the energy are formally conserved under the flow, we denote them

\[
M(u) = \frac{1}{2}\int_{\R^3} \abs{u(x)}^2\d x\,,\quad H(u) = \frac{1}{2}\int_{\R^3} \abs{\nabla u(x)}^2\d x - \frac{\sigma}{p+1}\int_{\R^3}\abs{u(x)}^{p+1} \d x\,.
\]
Note that these conservation laws ensure the existence of smooth global solutions in the energy-subcritical regime $p\leq 5$, when the energy is coercive.

\subsubsection{Ill-posedness issues.}  Assume that the Cauchy problem is known to be well-posed for smooth initial data in the sense of Definition~\ref{def:wp}, say in $H^\sigma$ for some $\sigma$ large enough. Then, one can try to prove ill-posedness in a low-regularity space $H^s$ by showing that the flow-map, defined on $H^\sigma$ for some $\sigma>s$, does not extend continuously to $C(\intervalcc{0}{T},H^s)$, with $T$ being arbitrarily small. This strategy is motivated by the persistence of regularity of the flow, that enables one to see the solutions in $H^s$, if they exist, as a limit of smoother solutions in $H^\sigma$. 

Before going further, let us point out that it is easier to prove ill-posedness in the presence of special nonlinear solutions. For instance, for focusing Schrödinger equations where nonlinear solutions are accessible, we have the following~\cite{SulemSulem1999}. 
 If $s_c<0$, the Schrödinger equation is mass-subcritical and there exist blow-up solutions that, combined with scaling arguments and virial identities, lead to solutions blowing up in arbitrarily short time in $H^s$ when $s<s_c$. In the mass-critical case, a blow-up solution is obtained by applying a pseudo-conformal transformation to the ground state solution, and for the mass-supercritical case~\cite{KenigPonceVegaIll} proved that the flow map is not uniformly continuous, respectively. Meanwhile, in the defocusing case, we do not have explicit nonlinear profiles generating blow-up solutions, except in the Schrödinger case with $p=3, d=1$ where one can make use of modified scattering solutions~\cite{ChristCollianderTao2003Freq}. The norm inflation mechanism we describe in the next paragraph is by no mean the only way to indicate that a dispersive PDE is ill-posed, and we refer to~\cite{TzvetkovIll2007,KenigPonceVegaIll} for the description of ill-posedness results in a broader context.

\subsubsection{Norm inflation} In the case of the Schrödinger or wave type equations, the discontinuity of the flow-map at scaling-supercritical regularities is stronger. Indeed, a norm inflation mechanism occurs, and consists in constructing a sequence of smooth initial data going to zero in $H^s$, such that the norm of the solution in $H^s$ goes to infinity for arbitrarily short times. Norm inflation has been evidenced by~\cite{Lebeau2001norm,Lebeau2005norm,Lindblad1993} for the wave equation~\footnote{In~\cite{Lindblad1993}, the norm inflation is due to another concentration phenomenon related with the Lorentz invariance of the wave equation.}, and extended to the Schrödinger equation in~\cite{ChristCollianderTao2003norm}. 

The general strategy to prove norm inflation in the scaling-supercritical regime is to perform a \emph{small dispersion analysis}. Namely, by rescaling an arbitrary bump function, one generates a sequence of smooth initial data $(\psi_n)_n$ that go to zero in $H^s$, while spatially concentrating around a point. Then, one considers the smooth solution $u_n$ with initial data $\psi_n$, and studies the \emph{bubble} solution $v_n$ to the dispersionless ODE
\begin{equation*}
    \begin{cases}
    i\partial_t v_n = \sigma \abs{v_n}^{p-1}v_n\,,\\
    v_n(0) = \psi_n\,.
    \end{cases}
\end{equation*}
The ODE profile $v_n$, that captures the oscillations of $u_n$, grows in $H^s$ after a time $t_n$ going to zero as $n$ goes to infinity. Then, one uses a priori energy estimates up to time $t_n$ and establishes that uniformly in $n$,
\[
\norm{u_n(t_n)-v_n(t_n)}_{H^s}\lesssim1\,.
\]
Therefore, when $0<s_c$ and $s<s_c$, the oscillations are stronger than the dispersion. In some sense, the norm inflation mechanism reflects a frequency cascade from low Fourier modes to high Fourier modes, responsible for the discontinuity of the flow-map.

 We note that such a mechanism is independent of the focusing or defocusing nature of the equation. Specifically, in the case of Schrödinger equation, the results from~\cite{ChristCollianderTao2003norm} state as follows. Norm inflation occurs when $s<-\frac{d}{2}$, and also in the mass-supercritical case when $0<s<s_c$. Furthermore, by using the frequency modulation method, Christ Colliander and Tao evidenced in~\cite{ChristCollianderTao2003norm} the \emph{high to low} frequency cascade when $s<-\frac{d}{2}$, and the \emph{low to high} frequency cascade when $0<s$. They also proved that the flow map fails to be uniformly continuous for $s<0$ by using the Galilean symmetries of~\eqref{eq:NLS}. Note that a similar small dispersion analysis has been performed earlier, in~\cite{Kuksin1987}. We refer to~\cite{burq-tzvetkov-2008I,BurqGerardTzvetkov2005multilinear} for some subsequent norm inflation results on compact manifolds, with similar proofs that rely on concentrating profiles in a point. We also mention the norm inflation results in negative Sobolev spaces from~\cite{Kishimoto,OhInflation}. In the first one, norm inflation occurs at zero for polynomial Schrödinger equations. In the second one, norm inflation occurs at any data for the periodic cubic Schrödinger equation, respectively.  

\subsubsection{Some variations} The norm inflation does not only occur around the zero solution, but also around any solution, as shown in~\cite{Xia2021generic} for the wave equation. The proof does not rely on finite propagation speed, so that it can be adapted to~\eqref{eq:NLS}, see also~\cite{Xia2022} establishing the norm inflation at any point for the fourth-order Schrödinger equation. The general idea is that if the regularizing sequence is arbitrary, norm inflation is very likely to occur for a dense set of initial data. However, as presented in the next paragraph, the picture is more intricate when one uses the convolution by an approximate identity, which is the most natural regularization procedure. It has the property to commute with the linear flow and to be uniformly bounded on $L^p$ when $p<\infty$.  

To push this result further, a key construction consists in gluing together blow-up profiles that concentrate at different points. Such a construction is inspired from the work of Lebeau~\cite{Lebeau2001norm}, and was used in~\cite{burq-tzvetkov-2008I,Xia2021generic} for wave equations to generate initial data from which any \emph{solution} $u$ satisfying the finite propagation speed inflates instantaneously
\[
\underset{t\to 0^+}{\limsup}\norm{u(t)}_{H^s} = +\infty\,.
\]
To give a precise meaning of the above inflating solution associated with such a pathological initial data,~\cite{SunTzvetkov2020pathological} adapted the construction in the presence of a regularizing convolution. As described in the next paragraph, this is also motivated by the probabilistic Cauchy theory at supercritical regularities.

Let us conclude this paragraph by mentioning the result of~\cite{Lebeau2001norm}, that evidenced a loss of regularity mechanism for the energy-supercritical wave equation by using some nonlinear geometric optics. For Schrödinger equation we refer to~\cite{AlazardCarles2009loss,ChironRousset2009}, that indicate as well loss of regularity. 

\subsubsection{Pathological set in the context of probabilistic well-posedness}

After the pioneering contributions from Bourgain~\cite{bourgain94,bourgain96-gibbs} for the cubic Schrödinger equation on $\mathbb{T}^2$, followed by the works from Burq and Tzvetkov for the nonlinear wave equation on compact Riemannian manifolds without boundary~\cite{burq-tzvetkov-2008I,burq-tzvetkov-2008II}, many authors have investigated the well-posedness issue at scaling-supercritical regularities by using a statistical approach. Specifically, given a Sobolev space $H^s(M)$ where the equation is known to be ill-posed, or semi-linearly ill-posed, one can search for a non-degenerate probability measure supported on a dense subset of $H^s(M)$, and prove that the equation is well-posed for initial data on the support of this measure. Such an approach stems from the study of Gibbs measures~\cite{bourgain96-gibbs,BurqThomannTzvetkov-Gibbs}, and has been extensively developed in other contexts since then. We refer to~\cite{BenyiOhPocovnicuSurvey2019} for a comprehensive survey on the so-called probabilistic Cauchy theory. Concerning the cubic Schrödinger equation in the Euclidean space $\R^d$, with $d\geq3$, Bényi, Oh, and Pocovnicu~\cite{benyi2015-local} proved generic well-posedness by using a Wiener randomization procedure, based on a unit-scale decomposition of the frequency space that leads to refined Strichartz estimates. To state the result in the context of the present work, we fix an approximate identity $(\rho_{\varepsilon})_{\varepsilon>0}$ defined as
\begin{equation}
\label{eq:rho-eps}
    \rho_{\varepsilon}(x):=\frac{1}{\varepsilon^3}\rho\left(\frac{x}{\varepsilon}\right)\,,
\end{equation}
where $\rho\in\classeC_c^{\infty}(\R^3)$, valued in $[0,1]$, satisfies $\int_{\R^3}\rho(x)\d x=1$. Let us consider the cubic~\eqref{eq:NLS} equation, with $p=3$, at a scaling-supercritical regularity $s< s_c=\frac{1}{2}$.
\begin{thm}[Local well-posedness on a full-measure set for cubic~\eqref{eq:NLS}]
\label{theo-lwp}
Let $\frac{1}{4}<s<\frac{1}{2}$. There exist a probability measure $\mu$ supported on $H^s(\R^3)$ and a dense set $\Sigma\subset H^s(\R^3)$ with full $\mu$-measure, such that the following holds. For all $f_0$ in $\Sigma$, the smooth local solutions $(u^\epsilon)$ to the cubic~\eqref{eq:NLS} with initial data $(f_0\ast\rho_\epsilon)$ are well-defined up to a time $T=T(f_0)>0$, and they converge to a limiting object $u$
\begin{equation}
    \label{eq:approx-proba}
    \underset{\epsilon\to0}{\lim} \norm{u^\epsilon-u}_{L^\infty(\intervalcc{0}{T};H^s(\R^3))} =0\,,\quad u \in C(\intervalcc{0}{T}; H^s(\R^3))\,.
\end{equation}
Moreover, $u$ solves~\eqref{eq:NLS} in the distributional sense~\eqref{eq:duhamel} on $\intervalcc{0}{T}$, with initial data $f_0$.
\end{thm}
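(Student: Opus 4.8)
The plan is to realize $\mu$ as a Wiener randomization measure and, for each fixed realization, to solve for the nonlinear part of $u^\epsilon$ by a contraction argument in a subcritical space, with every estimate uniform in the regularization parameter $\epsilon$. Fix a smooth partition of unity $\sum_{k\in\Z^3}\psi(\xi-k)\equiv1$ with $\psi$ supported in a fixed unit cube, write $\psi_k(D)$ for the Fourier multiplier with symbol $\psi(\cdot-k)$, and, given $f_0\in H^s(\R^3)$, set $f_0^\omega:=\sum_k g_k(\omega)\,\psi_k(D)f_0$, where the $g_k$ are i.i.d.\ standard real Gaussians on a probability space $(\Omega,\mathcal{A},\mathbb{P})$. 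The randomization leaves the $H^s$-norm essentially unchanged and does not improve Sobolev regularity, so that $\mu$, the law of $f_0^\omega$, is supported on $H^s(\R^3)$ (and one may take $f_0\notin\bigcup_{s'>s}H^{s'}$ so that the construction is genuinely supercritical); density of the full-measure set is obtained by the usual device of randomizing a countable dense family $(f^{(j)})_j\subset H^s$ and setting $\mu:=\sum_j2^{-j}\mu_j$, $\Sigma:=\bigcup_j\Sigma_j$, with $\mu_j$ the randomization of $f^{(j)}$ and $\Sigma_j$ the good set below --- since $f^{(j)}$ lies in the closed linear span of $(\psi_k(D)f^{(j)})_k$, hence in $\supp(\mu_j)=\overline{\Sigma_j}$. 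For clarity we describe the argument for a single reference datum $f_0$.

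The gain of randomization is that, on a set of full measure, the free evolution obeys improved space--time bounds beyond the $H^s$ deterministic Strichartz range: for a suitable range of exponents $(q,r)$ one has $\|e^{it\Delta}f_0^\omega\|_{L^q_tL^r_x([0,1]\times\R^3)}\lesssim_\omega\|f_0\|_{H^s}$. This follows from the unit-scale frequency localization (Bernstein upgrades $L^2$ to $L^r$ at a fixed scale) combined with Gaussian moment bounds applied after taking $L^q_tL^r_x$ norms and a Chebyshev argument; it is precisely here that the hypothesis $s>\frac14$ makes the relevant range of exponents nonempty. Crucially, convolution with $\rho_\epsilon$ is the Fourier multiplier $\widehat\rho(\epsilon\,\cdot\,)$, which is bounded by $1$, commutes with $e^{it\Delta}$ and with every $\psi_k(D)$, and is bounded on each $L^p_x$ with norm at most $\|\rho\|_{L^1}=1$ uniformly in $\epsilon$; hence the same bounds hold for $e^{it\Delta}(f_0^\omega*\rho_\epsilon)$ with $\epsilon$-independent constants, and moreover $e^{it\Delta}(f_0^\omega*\rho_\epsilon)\to e^{it\Delta}f_0^\omega$ as $\epsilon\to0$, both in these norms and in $C([0,1];H^s)$ (since $f_0^\omega*\rho_\epsilon\to f_0^\omega$ in $H^s$).

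Write $u^\epsilon=z^\epsilon+v^\epsilon$ with $z^\epsilon:=e^{it\Delta}(f_0^\omega*\rho_\epsilon)$, so that $v^\epsilon=-i\sigma\int_0^te^{i(t-\tau)\Delta}\big(|z^\epsilon+v^\epsilon|^2(z^\epsilon+v^\epsilon)\big)\d\tau$. Expanding the cubic nonlinearity, all terms containing at least one factor $v^\epsilon$ are estimated by deterministic Strichartz in a \emph{subcritical} space $X^\sigma_T=C([0,T];H^\sigma)\cap(\text{Strichartz norms})$ for a suitable $\sigma>s_c=\frac12$, while the purely linear term $|z^\epsilon|^2z^\epsilon$ is placed in $L^1([0,T];H^\sigma)$ using the probabilistic bounds of the previous paragraph together with the fractional Leibniz rule --- again it is the threshold $s>\frac14$ that allows this source term to be controlled at a subcritical regularity. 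A standard fixed-point argument then produces, for each $\epsilon$, a unique $v^\epsilon\in X^\sigma_T$ with $T=T(\|f_0\|_{H^s},\omega)>0$ and $\|v^\epsilon\|_{X^\sigma_T}\lesssim_\omega1$, both uniform in $\epsilon$ --- uniformity being available precisely because the estimates are subcritical and all $\epsilon$-dependent quantities are uniformly bounded by the previous paragraph. The solution map of this problem is Lipschitz in the source $z^\epsilon$ in the relevant norms, and $z^\epsilon\to z:=e^{it\Delta}f_0^\omega$ there, so $v^\epsilon\to v$ in $X^\sigma_T$ with $v$ the remainder attached to $z$. Setting $u:=z+v\in C([0,T];H^s)$ (using $H^\sigma\hookrightarrow H^s$), we obtain $u^\epsilon\to u$ in $L^\infty([0,T];H^s)$, which is \eqref{eq:approx-proba}; passing to the limit in the Duhamel formula for $u^\epsilon$, using the uniform a priori bounds and dominated convergence in the cubic term, shows that $u$ satisfies \eqref{eq:duhamel} with initial datum $f_0^\omega$. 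Taking for $\Sigma$ the full-measure set of realizations for which the above holds concludes the proof.

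The main obstacle is not any individual estimate but the \emph{uniformity in $\epsilon$}: one must obtain the probabilistic Strichartz inequalities and close the contraction on a time interval whose length does not shrink to zero as $\epsilon\to0$, even though $\|f_0*\rho_\epsilon\|_{H^\sigma}\to\infty$ for every $\sigma>s$. This is what forces all nonlinear estimates to be strictly subcritical and is where the restriction $\frac14<s$ enters, through the refined unit-scale Strichartz inequalities underlying the Wiener randomization; and it is the commutation of $\rho_\epsilon\ast(\cdot)$ with $e^{it\Delta}$ and with the frequency projections $\psi_k(D)$, together with its $\epsilon$-uniform boundedness on $L^p$, that makes this uniformity possible.
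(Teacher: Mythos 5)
Your overall strategy is the one the paper follows: Wiener randomization giving improved space–time bounds for the free evolution; linear–nonlinear decomposition $u^\epsilon = e^{it\Delta}(f_0^\omega\ast\rho_\epsilon)+v^\epsilon$; $\epsilon$-uniformity obtained from the commutation of $\rho_\epsilon\ast$ with $e^{it\Delta}$ and its uniform $L^p$ boundedness; convergence $v^\epsilon\to v$ via Lipschitz dependence of the fixed point on the source term, together with dominated convergence to show $\delta_\epsilon := e^{it\Delta}\big((f_0^\omega\ast\rho_\epsilon)-f_0^\omega\big)\to 0$ in the probabilistic Strichartz norms. (Your density device — randomizing a countable dense family and taking a convex combination of measures — differs from the paper's, which uses a non-degeneracy condition making the support of a single randomization measure dense; both are standard and fine.)

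There is, however, a genuine gap at the heart of the fixed-point argument. You propose to close the contraction in a space $X^\sigma_T = C_tH^\sigma\cap(\text{Strichartz})$ with $\sigma>\tfrac12$ by estimating the purely random source $\mathcal{N}(z^\epsilon)=|z^\epsilon|^2z^\epsilon$ via probabilistic Strichartz and fractional Leibniz, and the mixed terms by deterministic Strichartz on the $v$ factors. This does not close at $s\in(\tfrac14,\tfrac12)$: the probabilistic Strichartz estimates improve \emph{integrability} (Bernstein at unit frequency scale plus Gaussian decoupling) but yield no gain in \emph{derivatives}, so three factors at regularity $s<\tfrac12$ cannot be lifted to $H^\sigma$, $\sigma>\tfrac12$, by Leibniz and Hölder alone. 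The paper — following Bényi–Oh–Pocovnicu — instead carries out the contraction in Fourier restriction spaces $X^{\sigma,b}_T$ and relies on Bourgain's bilinear refinement (Lemma~\ref{lem:bili}) to prove the trilinear estimates of Lemma~\ref{lem:trilinear}; the bilinear smoothing in the high-high-low and high-low-low interactions is exactly where a half derivative is recovered and is the true source of the threshold $s>\tfrac14$. Your attribution of $s>\tfrac14$ to the admissibility of the probabilistic Strichartz exponents is incorrect — those estimates hold for every $s>0$. Once the contraction is run in $X^{\sigma,b}_T$ with the trilinear estimates in hand, the rest of your argument (uniform-in-$\epsilon$ bound, Lipschitz stability, bootstrap for $w_\epsilon:=v^\epsilon-v$) goes through exactly as in Section~\ref{sec:proba-convol}.
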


The local well-posedness result is due to~\cite{benyi2015-local} in dimension 4, and~\cite{benyi2015} in dimension $d\geq3$ for cubic~\eqref{eq:NLS}, written in the functional framework of $U^p, V^p$ spaces  in order to prove scattering for small data. However, the convergence of the approximate solutions~\eqref{eq:approx-proba} is not written in~\cite{benyi2015}, and we prove it in Section~\ref{sec:proba} by adapting the analysis performed in~\cite{benyi2015-local,benyi2015}. Note that the regularization by convolution is essential since, as pointed out in~\cite{Xia2021generic} for the wave equation, norm inflation occurs over a dense set of initial data when the regularization procedure is arbitrary. This result is in the spirit of Theorems~2.6 and~2.7 form~\cite{TzvetkovRandomWave} for the wave equation. Let us mention that the large-time behavior of the probabilistic solution has been investigated by many authors, by combining the probabilistic local well-posedness result of Theorem~\ref{theo-lwp} with deterministic methods. Concerning the cubic Schrödinger equation in $\R^3$,~\cite{benyi2015,PoiretRobertThomann2014} established almost-sure scattering for small data, and~\cite{camps2020} proved almost-sure asymptotic stability for small ground states. Outside the small data regime,~\cite{killip-murphy-visan-2019,dodson-luhrmann-mendelson-20,Spitz2021} proved almost-sure scattering for the cubic in $\R^4$, and ~\cite{camps2021,SofferShenWu2021} extended the scattering result in $\R^3$, where the cubic~\eqref{eq:NLS} is energy-subcritical. 

Outside this full-measure set where the statistical approach provides a strong notion of well-posedness, there exist initial data that concentrate in a point so that a low to high frequency cascade occurs. From the result of~\cite{Xia2021generic}, we know that if the regularization procedure is arbitrary, such an instability phenomenon may appear in the vicinity of any initial data. In addition, Sun and Tzvetkov proved in~\cite{SunTzvetkov2020pathological} that for nonlinear wave equations, when the initial data are regularized by convolution, norm inflation occurs on a {\it pathological set} containing a dense $G_\delta$ set, even when a probabilistic well-posedness result in the spirit of Theorem~\ref{theo-lwp} holds. 

\subsubsection{Main result}
Following the approach of Sun and Tzvetkov~\cite{SunTzvetkov2020pathological} for the nonlinear wave equation, we describe the pathological set for equation~\eqref{eq:NLS}, which is the counterpart of the probabilistic well-posedness theory presented in Theorem~\ref{theo-lwp}. We show that even if we naturally regularize the initial by convolution, there still exist norm inflation solutions arising from a dense set of initial data in~$H^s$.
\begin{thm}[Pathological set of scaling-supercritical data for~\eqref{eq:NLS}]
\label{thm-NLS}
Let $p\geq3$ be an odd integer~\footnote{This assumption is technical, and might be removed. It guarantees that the bubbles constructed in paragraph~\ref{sec:bubble} are $C^\infty$.} and $0< s<s_c=\frac{3}{2}-\frac{2}{p-1}$. There exists a dense set $S\subset H^s(\R^3)$ such that for every $f_0\in S$, the family of local solutions $(u^{\varepsilon})$ of~\eqref{eq:NLS}  with initial data $(f_0\ast\rho_{\varepsilon})$ does not converge because of norm inflation. More precisely, there exist $\varepsilon_n\to 0$ and $t_n\to 0$ such that $u^{\varepsilon_n}$ exists in $C(\intervalcc{0}{t_n},H^s(\R^3))$, and
\begin{equation}
    \label{eq:limsup}
    \lim_{n\to\infty}\|u^{\varepsilon_n}(t_n)\|_{H^s(\R^3)}=\infty\,.
\end{equation}
Moreover, in the defocusing energy-subcritical and critical cases $p\in\set{3,5}$, where~\eqref{eq:NLS} is known to be globally~\footnote{We refer to~\cite{CKSTTCritical2008} for the proof of global well-posedness and scattering in the energy space of the quintic ($p=5$) defocusing Schrödinger equation.} well-posed in $H^1$, the pathological set $\mathcal{P}$ contains a dense $G_\delta$ set.
\end{thm}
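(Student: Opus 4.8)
The plan is to adapt the Sun--Tzvetkov construction~\cite{SunTzvetkov2020pathological}, substituting the smoothing of the approximate identity $(\rho_{\varepsilon})$ for the finite propagation speed available in the wave setting.

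\emph{Single bubble.} First I would set up the small-dispersion analysis of~\cite{ChristCollianderTao2003norm}. Fix $\chi\in\classeC_c^{\infty}(\R^3)$ with $\|\chi\|_{\dot{H}^{s_c}}>0$, and for $\lambda\in(0,1)$, $x_\ast\in\R^3$ consider the profile $\Psi=\lambda^{-\frac{2}{p-1}}\chi\bigl(\tfrac{\cdot-x_\ast}{\lambda}\bigr)$, so that $\|\Psi\|_{\dot{H}^{s_c}}\sim1$ and $\|\Psi\|_{H^s}\sim\lambda^{s_c-s}$. Let $v$ solve the dispersionless equation $i\partial_t v=\sigma|v|^{p-1}v$ with $v(0)=\Psi$, i.e. $v(t,x)=\Psi(x)\e^{-i\sigma t|\Psi(x)|^{p-1}}$: its spatial support is frozen, and computing the fractional derivative of the phase shows that $\|v(t)\|_{H^s}/\|\Psi\|_{H^s}$ grows like a positive power of $t/\tau(\Psi)$ once $t\gtrsim\tau(\Psi):=\|\Psi\|_{L^\infty}^{-(p-1)}$. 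Writing $w=u-v$, whose equation has forcing $\Delta v$ and remainders polynomial in $(u,v)$ (here $p$ odd and $s>0$ are used), a Gronwall estimate gives $\|u(t)-v(t)\|_{H^s}\lesssim1$ on a time interval $[0,T]$ with $T/\tau(\Psi)$ a positive power of $1/\lambda$ — this power being positive precisely because $s<s_c$ — so that $u$ inflates in $H^s$ over $[0,T]$ by an arbitrarily large factor; running the same estimate at a regularity $\sigma>s$, where $\|v(t)\|_{H^\sigma}<\infty$, also shows that $u$ persists there.

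\emph{Superposition.} Next I would glue such bubbles at well-separated points. Fix $g\in\mathcal{S}(\R^3)$, points $x_n$ with mutual distances $d_n$, scales $\lambda_n\to0$, and weights with $|c_n|\lambda_n^{s_c-s}=n^{-2}$ (so $c_n$ grows and the $n$-th profile is strong while small in $H^s$), and set $f_0=g+\sum_{n\geq1}c_n\Psi_n$ with $\Psi_n:=\lambda_n^{-\frac{2}{p-1}}\chi\bigl(\tfrac{\cdot-x_n}{\lambda_n}\bigr)$; the series converges in $H^s(\R^3)$ since the $\Psi_n$ have disjoint supports and $\sum_n|c_n|\lambda_n^{s_c-s}<\infty$. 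The crucial observation is the effect of $\Psi_j\ast\rho_\varepsilon$ according to the ratio $\lambda_j/\varepsilon$: for $\lambda_j=\varepsilon$ one gets $\lambda_j^{-\frac{2}{p-1}}(\chi\ast\rho)\bigl(\tfrac{\cdot-x_j}{\varepsilon}\bigr)$, again a smooth bump at scale $\varepsilon$ with comparable amplitude, covered by the single-bubble analysis; for $\lambda_j\gg\varepsilon$ the bubble is essentially unchanged; and for $\lambda_j\ll\varepsilon$ the convolution spreads it to scale $\varepsilon$ and divides its height by $(\varepsilon/\lambda_j)^3$, thereby delaying the inflation time of its dispersionless profile by the large factor $(\varepsilon/\lambda_j)^{3(p-1)}$. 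I would then take $\varepsilon_n:=\lambda_n$ and $t_n$ equal to a large multiple $M_n\to\infty$ of the inflation time $\tau_n:=\|c_n\Psi_n\|_{L^\infty}^{-(p-1)}$, and choose $(\lambda_n,d_n,M_n)$ — with $\lambda_n$ decreasing very fast — to meet a finite list of separations: $t_n\to0$; $t_n$ smaller than the inflation times of all the bubbles $j\neq n$ after convolution by $\rho_{\varepsilon_n}$ (which holds since these are $\gtrsim\tau_{n-1}\gg\tau_n$ for $j<n$, and $\sim\tau_j(\lambda_n/\lambda_j)^{3(p-1)}\gg\tau_n M_n$ for $j>n$ thanks to the convolution delay); and $M_n$ large enough that $M_n^{s}\gg n^2$ while still inside the energy-estimate window. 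With these choices, over $[0,t_n]$ the bubbles remain essentially decoupled — the dispersionless profiles $v_j^{(n)}$ of $c_j(\Psi_j\ast\rho_{\varepsilon_n})$ have disjoint supports so their nonlinear cross-terms vanish identically, while the frequencies generated by the $n$-th bubble travel a distance $\ll d_n$ under $\e^{it\Delta}$ on that time scale — so that $\|u^{\varepsilon_n}(t_n)-\sum_j v_j^{(n)}(t_n)\|_{H^s}\lesssim1$. As each bubble $j\neq n$ has not yet inflated at $t_n$, one has $\sum_{j\neq n}\|v_j^{(n)}(t_n)\|_{H^s}\lesssim\sum_{j}|c_j|\lambda_j^{s_c-s}\lesssim1$, whence by the triangle inequality
\[
\|u^{\varepsilon_n}(t_n)\|_{H^s}\geq\|v_n^{(n)}(t_n)\|_{H^s}-C(f_0)\gtrsim M_n^{s}\,|c_n|\lambda_n^{s_c-s}-C(f_0)=\frac{M_n^{s}}{n^2}-C(f_0)\xrightarrow[n\to\infty]{}+\infty,
\]
which is~\eqref{eq:limsup} with $\varepsilon_n,t_n\to0$. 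The set $S$ of such $f_0$ is dense, since $g$ ranges over $\mathcal{S}(\R^3)$ while the bubble series can be made arbitrarily small in $H^s$ by scaling the $c_n$, and a fixed smooth background only perturbs the preceding by $O(1)$ in $H^s$ on $[0,t_n]$, the single-bubble mechanism being insensitive to smooth perturbations~\cite{Xia2021generic}.

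\emph{Main obstacle.} The heart of the argument — and the step with no wave-equation analogue — is the multi-bubble a priori estimate on $[0,t_n]$: one must quantify the dispersive spreading of each (possibly strongly inflating) bubble, verify it does not reach the neighbouring bubbles within time $t_n$, and absorb the resulting error together with $\sum_j\Delta v_j^{(n)}$ into a Gronwall argument, all while juggling the three convolution regimes $\lambda_j<\varepsilon_n$, $\lambda_j=\varepsilon_n$, $\lambda_j>\varepsilon_n$ and keeping $(\lambda_n,d_n,M_n,c_n)$ mutually compatible. The conceptual ingredient that replaces finite propagation speed is the quantitative claim that, once convolved with $\rho_{\varepsilon_n}$, a bubble concentrated at a scale $\ll\varepsilon_n$ cannot inflate within the short time $t_n$ — this is what rules out the sub-scale bubbles — and it demands care because those bubbles are small only in $L^\infty$, not in $H^s$.

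\emph{A dense $G_\delta$ for $p\in\{3,5\}$.} Finally, in the defocusing cases $p\in\{3,5\}$, where~\eqref{eq:NLS} is globally well-posed in $H^1(\R^3)$ (see~\cite{CazenaveWeissler1990} and~\cite{CKSTTCritical2008}) and $f_0\ast\rho_\varepsilon\in H^\infty(\R^3)$, for each fixed $\varepsilon>0$ and $t\geq0$ the map $f_0\in H^s(\R^3)\mapsto u^\varepsilon_{f_0}(t)\in H^s(\R^3)$ is continuous: convolution by $\rho_\varepsilon$ is bounded $H^s\to H^N$ for every $N$, and the global $H^1$-flow together with persistence of regularity is continuous in higher norms. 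Hence each
\[
\mathcal P_k:=\bigcup_{0<\varepsilon<\frac1k}\ \bigcup_{0<t<\frac1k}\ \bigl\{f_0\in H^s(\R^3):\ \|u^\varepsilon_{f_0}(t)\|_{H^s}>k\bigr\}
\]
is open in $H^s(\R^3)$, and it is dense because it contains $S$ (for $f_0\in S$ the construction supplies $\varepsilon_n,t_n\to0$ with $\|u^{\varepsilon_n}_{f_0}(t_n)\|_{H^s}\to\infty$). By Baire's theorem $\mathcal P:=\bigcap_{k\geq1}\mathcal P_k$ is a dense $G_\delta$ subset of $H^s(\R^3)$, and every $f_0\in\mathcal P$ admits $\varepsilon_k,t_k\to0$ with $\|u^{\varepsilon_k}_{f_0}(t_k)\|_{H^s}>k$: the regularised solutions suffer norm inflation, so $\mathcal P$ is contained in the pathological set.
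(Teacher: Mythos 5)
Your overall skeleton — rescaled bubbles at scales $\lambda_n\to 0$, observation that convolution by $\rho_{\varepsilon_n}$ delays the inflation of sub-scale bubbles by $(\varepsilon_n/\lambda_j)^{3(p-1)}$, choice $\varepsilon_n\sim\lambda_n$ and $t_n$ a large multiple of the ODE time, and the Baire-category argument for the dense $G_\delta$ in the globally well-posed defocusing cases — matches the paper. However, your plan for the key a priori estimate is not the paper's, and it is precisely the step where the approach you propose is unlikely to close.

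You keep the Sun--Tzvetkov device of well-separated bubble centers $x_n$ with mutual distances $d_n$, and propose to justify decoupling on $[0,t_n]$ by arguing that the ODE profiles have disjoint supports while ``the frequencies generated by the $n$-th bubble travel a distance $\ll d_n$ under $\e^{it\Delta}$.'' But the Schrödinger group has no finite propagation speed; an inflating bubble of frequency $\sim n_k$ spreads instantaneously, and the tails are only polynomially small, not compactly supported. Controlling those tails in $H^s$ uniformly over the infinitely many bubbles, on a time window that is chosen to be long enough for the $n$-th bubble to inflate by an unbounded factor, is exactly the difficulty the paper's introduction flags as the reason the wave-equation argument does not transfer — and you yourself label it the ``main obstacle'' without producing a mechanism to overcome it. The paper's actual innovation is to sidestep spatial separation entirely. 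It introduces a \emph{linear correction}
\[
u_L^{\epsilon_k} :=\e^{it\Delta}u_0^{\epsilon_k}+\sum_{l=k_0}^{k-1}\e^{it\Delta}v_{0,l}^{\epsilon_k}\,,
\]
i.e.\ the free Schrödinger evolution of the smooth background and of the large-scale bubbles, and compares $u^{\epsilon_k}$ directly to $u_L^{\epsilon_k}+v_n^{\epsilon_k}$ via a semi-classical energy
$E_n(t) = \bigl(n^{2s}\|w\|_{L^2}^2 + n^{2(s-2)}\|w\|_{H^2}^2\bigr)^{1/2}$
for $w = u^{\epsilon_k} - u_L^{\epsilon_k} - v_n^{\epsilon_k}$. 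The point is that $w(0)$ consists only of the sub-scale bubbles $\sum_{l>k}v_{0,l}\ast\rho_{\epsilon_k}$, which by the smoothing of the convolution make $E_n(0)\lesssim n^{-c_0}$ small (Lemma~\ref{lem:scale}), and the source terms coming from $u_L^{\epsilon_k}$ can be estimated using $\|u_L^{\epsilon_k}\|_{H^m}\lesssim n_{k-1}^{m-s}\ll n_k^{m-s}$ — no disjoint supports, no dispersive-spreading bound, and no cross-term cancellation. Indeed the paper remarks that one can take all $x_k=0$, so the bubbles may be concentric (and the construction is radial).

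One small inaccuracy to flag as well: you write that the sub-scale bubbles are ``small only in $L^\infty$, not in $H^s$.'' Before convolution, each bubble $v_{0,l}$ already has $\|v_{0,l}\|_{H^s}\sim\kappa_{n_l}\to 0$ (logarithmically), and after convolution by $\rho_{\epsilon_k}$ with $\epsilon_k\gg n_l^{-1}$ the smoothing actually gives \emph{polynomial} gain
$\sum_{l>k}\|\rho_{\epsilon_k}\ast v_{0,l}\|_{L^2}\lesssim n_{k+1}^{-s}(n_k/n_{k+1})^{3/2}$
(Lemma~\ref{lem:scale}), which is what makes $E_n(0)$ small enough to absorb the source terms. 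It is this estimate, rather than an $L^\infty$-based delay in inflation time, that the Gronwall argument hinges on. So the conceptual ingredient replacing finite propagation speed is not a lower bound on the delayed inflation time of sub-scale bubbles, but the smallness of those bubbles in $L^2$-based norms after convolution, combined with the linear correction $u_L^{\epsilon_k}$ that prevents the super-scale bubbles from polluting $w(0)$.

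The $G_\delta$ paragraph is essentially the paper's Proposition~\ref{prop:dense} and is fine, modulo the fact that you need the first part (the norm-inflation bound along $\varepsilon_n,t_n$) to establish that $\mathcal P_k$ is dense, so the gap in the a priori estimate propagates there too.
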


To exhibit the pathological set, we use the same construction as in~\cite{SunTzvetkov2020pathological} that consists in putting side by side an infinity of inflating bubbles concentrated at arbitrarily small scales. As discussed above, this construction is contained in~\cite{burq-tzvetkov-2008I,ChristCollianderTao2003norm}, and was inspired from~\cite{Lebeau2001norm}. In order to generate a dense set in $H^s$ and not only around zero,~\cite{SunTzvetkov2020pathological} implemented the idea from~\cite{Xia2021generic} that consists in performing the norm inflation around any data in $H^s$. The proof proposed in~\cite{SunTzvetkov2020pathological} revisits the small dispersion analysis from~\cite{ChristCollianderTao2003norm} with an additional convolution. Indeed, since their motivation is to make precise the probabilistic well-posedness result, the smooth initial data need to be obtained by convolution with an approximate identity. Then, they used finite propagation speed to indicate that the different bubbles do not interact in short time.

In the case of Schrödinger type equations, we do not have such a finite propagation speed principle. To overcome this, we play with the regularizing convolutions. The observation is that given a fixed scale of regularization $\epsilon>0$, the convolution by $\rho_\epsilon$ breaks the bubbles that concentrate at scale $\lambda^{-1}\ll\epsilon$. Hence, at a certain suitable time $t_\epsilon\ll1$, the bubble concentrated at scale $\lambda^{-1}\sim\epsilon$ is the only one to inflate. This observation indicates that there is no need to use the finite propagation speed in the proof of Theorem~\ref{thm-NLS}, even in the case of the nonlinear wave equation analyzed in~\cite{SunTzvetkov2020pathological}. Therefore, the same approach probably extends to any equation of Schrödinger type, provided there exists a reasonable Cauchy theory for smooth initial data. Indeed, we do not directly use the particular dispersion of~\eqref{eq:NLS} in the analysis. Let us mention features of the approach developed in this work.
\subsubsection*{\emph{\textmd{Finite propagation speed and localization of bubbles}}}
Since we do not use any finite propagation speed, we have access to Schrödinger type equations. Another consequence is that we can superpose the bubbles in the construction. Indeed, we do not exploit the absence of interaction between the different bubbles, that was crucial in the short time analysis from~\cite{SunTzvetkov2020pathological} about the wave equation. Hence, we can take the center of the bubbles to be $x_k = 0$, so that the pathological set is left invariant under the action of the rotations and the construction persists in the radial case. Note, however, that we are not able to prove that the solution inflates along the whole sequence as $\epsilon$  goes to zero, but only along the subsequence $\epsilon_n$ (see remark~1.1 in~\cite{SunTzvetkov2020pathological}, and remark~\ref{rem:eps} below). 

\subsubsection*{\emph{\textmd{Focusing and defocusing equations and range of nonlinearities}}}
We show that the a priori control on the growth of Sobolev norms is sufficient to establish a lower bound on the time of existence for the smooth solutions obtained by convolution. Hence, we can bypass the use of the coercive energy, and we treat the focusing and defocusing case without distinction. Also, we have access to the energy-supercritical case $p>5$. In such a case, we point out that the norm inflation of the $H^1(\R^3)$-norm does not contradict the conservation of the energy, since when $p>5$ the potential energy of an initial data in $H^1(\R^3)$ is infinite.  However, to prove that $\mathcal{P}$ contains a dense $G_\delta$ set we need to assume that the smooth solutions are global, say in $H^{\frac{3}{2}+}(\R^3)$, see Proposition~\ref{prop:dense}. Hence, the reason for the technical constraint on $p\in\set{3,5}$ in the statement of the second part of Theorem~\ref{thm-NLS}.

\subsubsection*{\emph{\textmd{Lower bound on \texorpdfstring{$s$}{s}}}} Sun and Tzvetkov observed in~\cite{SunTzvetkov2020pathological,SunTzvetkov2020JFA} that a lower bound on $s$ might be required. However, in the statement of Theorem~\ref{thm-NLS}, we do not need such a lower bound, and we propose an explanation of this difference in remark~\ref{rem:lower-bound}. Yet, we do not know if the limitation in the case of the wave equation is just technical or not. Note that we do not have $s=0$ because of mass conservation. When $s<0$ other mechanisms operate, and the possible norm inflation is due to a \emph{high to low} frequency cascade. 

\subsubsection*{\emph{\textmd{Other geometries and dimensions}}}
Since the construction of the pathological set is local, up to working in local coordinates, our proof works as well in the periodic case, or in the case of a compact Riemannian manifold. Nevertheless, we decided to state the result in the Euclidean case, since there is no general probabilistic Cauchy theory in the spirit of~\cite{burq-tzvetkov-2008I} for~\eqref{eq:NLS} on a compact manifold of dimension 3. Our proof can also be adapted to higher dimension, but we write it in the context of~\cite{SunTzvetkov2020pathological} for simplicity, in the sense that we only need to compute two derivatives to control the $L^\infty$-norm of a function in $\R^3$. In higher dimensions, we would have to use a semiclassical energy with higher order derivatives. 

\subsubsection*{Conclusion}

In the scaling-supercritical regime, and at least in the cubic case, we have two different behaviors for the smooth solutions to~\eqref{eq:NLS} initiated from regularized initial data obtained by convolution with an approximate identity. On the one hand, Theorem~\ref{thm-NLS} states that the {\it pathological} set of initial data where norm inflation occurs contains a dense $G_\delta$ set. On the other hand, the statistical approach from Theorem~\ref{theo-lwp} provides of dense full-measure set of initial data for which the regularized solutions converge to a limiting object that is a strong solution to~\eqref{eq:NLS}. This is the counterpart for Schrödinger-type equations to the result from Sun and Tzvetkov~\cite{SunTzvetkov2020pathological} for wave equations, and the same comments apply. Namely, there are two different notions of genericity. While the norm inflation set from Theorem~\ref{thm-NLS} is generic for the topology, it is negligible for the measure. Meanwhile, we know from the Baire category Theorem that the well-posedness set from Theorem~\ref{theo-lwp}, which has full measure, cannot be a dense $G_\delta$ set.

\subsubsection{Outline of the paper}

We first establish the generic ill-posedness Theorem~\ref{thm-NLS} in section~\ref{section:pathological}. To achieve such a goal, we set up the fundamental norm inflation bubbles $v_n$ in part~\ref{sec:bubble},  and we construct the pathological set in part~\ref{sec:pathological} by superposing these bubbles. In paragraph~\ref{sec:perturbative-analysis} we fix a given scale $\epsilon_n$, its corresponding inflation time $t_n$, and we compare in $H^s(\R^3)$ the solution to~\eqref{eq:NLS} initiated from an initial data in the pathological set, with the bubble concentrated at scale $\epsilon_n$, supposed to inflate after the time $t_n$. In a second section~\ref{sec:proba}, we prove the generic well-posedness result. After introducing preliminaries for the probabilistic method in part~\ref{sec:prelim} and reviewing relevant probabilistic Cauchy theory results in part~\ref{sec:proba-lwp}, we prove in part~\ref{sec:proba-convol} the convergence in $H^s(\R^3)$ of the regularized solution by convolution to the solution generated from a randomized initial data.

\begin{center}
\textbf{Acknowledgements}
\end{center}
The authors would like to thank Chenmin Sun and Nikolay Tzvetkov for presenting them the problem and the \emph{``tanghuru''} construction. They also thank Nicolas Burq and Frédéric Rousset for helpful discussions, and Tristan Robert for some references.

\section{Pathological set of data where norm inflation occurs}\label{section:pathological}

The purpose of this section is to prove Theorem~\ref{thm-NLS}. First, we follow the strategy from~\cite{SunTzvetkov2020pathological} and we study the unstable profile, or the \emph{bubble}, regularized by an approximate identity. Our key observation can be stated as follows. When convoluted with an approximated identity $(\rho_\epsilon)$, a blow-up profile concentrated at scale $n^{-1}$, and supposed to inflate at time $t_n$, actually gets smoother and remains bounded at time $t_n$, as soon as $n^{-1}\ll \epsilon$. We draw the attention to Lemmas~\ref{lem:growth_vn} and~\ref{lem:scale} that indicate such a mechanism. Owing to this observation, we show in Section~\ref{sec:perturbative-analysis} how to generalize the small-dispersion analysis from~\cite{SunTzvetkov2020pathological} without using finite propagation speed. 

\subsection{Unstable profile}\label{sec:bubble}

We construct unstable profiles by following the approach in~\cite{BurqGerardTzvetkov2005multilinear}. The construction of the bubbles, that have growing $H^s$-norm, is based on the solution $V(t)=e^{i\sigma t}$ to the dispersionless ODE  
\[
\begin{cases}
iV'+\sigma |V|^{p-1}V=0
\\
V(0)=1\,.
\end{cases}
\]
Note that $V$ is a bounded periodic function. Set a smooth and radial profile  $\varphi\in\classeC_{c}^{\infty}(\{\|x\|\leq 1\})$, such that $0\leq \varphi\leq 1$, from which we define a bubble concentrated at scale $n$ 
\begin{equation*}
v_n(0,x):=\lambda_n^{\frac{2}{p-1}}\varphi(nx)=\kappa_n n^{\frac{3}{2}-s}\varphi(nx)\,.
\end{equation*}
Given $\gamma>0$ to be determined later in the analysis, the parameters of the above bubble are
\[
\kappa_n:=(\log(n))^{-\gamma}
,\quad \lambda_n:=\kappa_n^\frac{p-1}{2} n^{(\frac{3}{2}-s)\cdot\frac{p-1}{2}}\,.
\]
We recall that we have fixed an approximate identity defined in~\eqref{eq:rho-eps},
and consider the bubble after convolution by the approximate identity
\[
v_n^{\varepsilon}(0):=\rho_{\varepsilon}\ast v_n(0).
\]
Given a time $t$, we consider the profile
\begin{equation*}
v_n^{\varepsilon}(t,x):=v_n^{\varepsilon}(0,x) V(t|v_n^{\varepsilon}(0,x)|^{p-1})\,,
\end{equation*}
solution to the dispersionless ODE
\begin{equation*}
    \begin{cases}
        i\partial_t v_n^{\varepsilon}+\sigma|v_n^{\varepsilon}|^{p-1}v_n^{\varepsilon}=0\,.\\
        v_n^{\varepsilon}(0) = \rho_{\varepsilon}\ast v_n(0)\,,
    \end{cases}
\end{equation*}
We now fix the parameters,
\begin{equation}
    \label{eq:parameters}
    \varepsilon_n:=\frac{1}{100n},\quad
t_n:=\log(n)^{\beta(p-1)}n^{(s-\frac{3}{2})(p-1)}=\lambda_n^{-2} \log(n)^{(\beta-\gamma)(p-1)}\,,
\end{equation}
for suitable $0<\gamma<\beta<1$ adjusted in the analysis, and we follow~\cite{SunTzvetkov2020pathological} to adapt the bounds from~\cite{BurqGerardTzvetkov2005multilinear} in the presence of the convolution parameter.
\begin{lem}[Growth of the profile]\label{lem:growth_vn} Let $0\leq s<\frac{3}{2}$.~\footnote{Note that when $s=0$ the lower bound goes to zero as $n$ goes to zero. This is in agreement with the mass conservation.}
\begin{enumerate}
\item There exists $c>0$ independent of $n$ such that for all $\varepsilon\leq\varepsilon_n$, we have the lower bound 
\[
\|v_n^{\varepsilon}(t_n)\|_{H^s}\geq c \kappa_n(\lambda_n^2t_n)^s\,.
\]
\item For all $m\in\N$, there exists $C>0$ such that for all $n\in\N$, $t\in\R$ and $\epsilon>0$,
\[\||\nabla|^m v_n^{\varepsilon}(t)\|_{L^2}\leq C \kappa_n n^{m-s}(1+(t\lambda_n^2)^{m})\min\left(1,\frac{1}{\varepsilon n}\right)^{m+\frac{3}{2}}\,.
\]
In addition, 
\[\||\nabla|^m v_n^{\varepsilon}(t)\|_{L^{\infty}}\leq C \kappa_n n^{m+\frac{3}{2}-s}(1+(t\lambda_n^2)^{m})\min\left(1,\frac{1}{\varepsilon n}\right)^{m+\frac{3}{2}}\,.
\]
\end{enumerate}
\end{lem}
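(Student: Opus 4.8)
The plan is to treat the mollification and the dispersionless flow separately. First I would record, for the regularized bubble $v_n^{\varepsilon}(0)=\rho_{\varepsilon}\ast v_n(0)$, the estimates
\[
\norm{|\nabla|^m v_n^{\varepsilon}(0)}_{L^2}\lesssim \kappa_n n^{m-s}\mu^{m+\frac32},\qquad \norm{|\nabla|^m v_n^{\varepsilon}(0)}_{L^{\infty}}\lesssim \kappa_n n^{m+\frac32-s}\mu^{m+\frac32},\qquad \mu:=\min\!\Big(1,\tfrac{1}{\varepsilon n}\Big).
\]
Since $m$ is an integer, each derivative may be placed either on $\rho_{\varepsilon}$ or on the bump $v_n(0)=\kappa_n n^{\frac32-s}\varphi(n\cdot)$ and the two estimates follow from Young's inequality. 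Placing all derivatives on $v_n(0)$ and using $\norm{\rho_{\varepsilon}}_{L^1}=1$ gives the bound with $\mu=1$; placing all of them on $\rho_{\varepsilon}$ and pairing $\norm{|\nabla|^m\rho_{\varepsilon}}_{L^2}\sim\varepsilon^{-m-\frac32}$ with $\norm{v_n(0)}_{L^1}\sim\kappa_n n^{-\frac32-s}$ (via $L^2\ast L^1\hookrightarrow L^2$), respectively with $\norm{v_n(0)}_{L^2}\sim\kappa_n n^{-s}$ (via $L^2\ast L^2\hookrightarrow L^{\infty}$), produces the factor $(\varepsilon n)^{-m-\frac32}$; taking the smaller of the two gives $\mu^{m+\frac32}$. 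In particular $\norm{v_n^{\varepsilon}(0)}_{L^{\infty}}\lesssim\kappa_n n^{\frac32-s}\mu^{3/2}$, which already encodes that the bubble is ``broken'' when $\varepsilon n\gg1$.

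Second, I would propagate these bounds through the ODE factor $v_n^{\varepsilon}(t,x)=v_n^{\varepsilon}(0,x)\,V\!\big(t|v_n^{\varepsilon}(0,x)|^{p-1}\big)$, $V(\tau)=e^{i\sigma\tau}$. Here the hypothesis that $p$ is an odd integer is used: $|z|^{p-1}=(z\bar z)^{(p-1)/2}$ is a polynomial in $(z,\bar z)$, so $w_n^{\varepsilon}:=|v_n^{\varepsilon}(0)|^{p-1}$ is smooth, and the Leibniz rule applied to its $p-1$ factors together with the bounds above yields $\norm{|\nabla|^k w_n^{\varepsilon}}_{L^{\infty}}\lesssim \lambda_n^2\, n^{k}\,\mu^{k+\frac{3(p-1)}{2}}$, recalling $\lambda_n^2=(\kappa_n n^{\frac32-s})^{p-1}$. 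Differentiating $v_n^{\varepsilon}(t)$ with the Leibniz rule and Faà di Bruno's formula, using $|V^{(r)}|\le C_r$, a generic term distributes the $m$ derivatives so that the phase is hit in $r$ blocks with $0\le r\le m$, producing a factor $(t\lambda_n^2)^{r}\le 1+(t\lambda_n^2)^{m}$ and a power $\mu^{\ell}$ with $\ell\ge m+\frac32$, which we lower to $\mu^{m+\frac32}$ since $\mu\le1$; estimating one factor in $L^2$ (resp.\ $L^{\infty}$) and the remaining ones in $L^{\infty}$ then gives part~(2). Checking that at least $m+\frac32$ powers of $\mu$ always survive this nonlinear composition is the first mildly delicate bookkeeping point.

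For the lower bound~(1), the key remark is that when $\varepsilon\le\varepsilon_n=\frac{1}{100n}$ the mollification is a negligible perturbation of the bump $v_n(0)$, which lives at scale $1/n$: one has $\norm{|\nabla|^k(v_n^{\varepsilon}(0)-v_n(0))}_{L^{\infty}}\lesssim (\varepsilon n)\,\kappa_n n^{k+\frac32-s}\le\frac{1}{100}\kappa_n n^{k+\frac32-s}$, so $v_n^{\varepsilon}(0)$ is still $\kappa_n n^{\frac32-s}$ times a fixed non-degenerate profile up to a factor $1+O(\tfrac{1}{100})$, and there is a transition region $R_n$ of measure $\sim n^{-3}$ on which $|v_n^{\varepsilon}(0)|\sim\kappa_n n^{\frac32-s}$ while $|\nabla v_n^{\varepsilon}(0)|\sim\kappa_n n^{\frac52-s}$. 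The ODE preserves $|v_n^{\varepsilon}(t,x)|$, and on $R_n$ the phase $\theta_n(x):=t_n|v_n^{\varepsilon}(0,x)|^{p-1}$ satisfies $|\nabla\theta_n|\sim t_n\lambda_n^2 n=:N_0$, with $N_0\gg n$ for $n$ large since $t_n\lambda_n^2=\log(n)^{(\beta-\gamma)(p-1)}\to\infty$. Plugging $v_n^{\varepsilon}(t_n,x)=v_n^{\varepsilon}(0,x)\,e^{i\sigma\theta_n(x)}$ into the Gagliardo--Slobodeckij expression $\norm{f}_{\dot H^s}^2\sim\iint\frac{|f(x)-f(y)|^2}{|x-y|^{3+2s}}\,dx\,dy$ and retaining only pairs with $x\in R_n$, $|x-y|\sim N_0^{-1}$, and $|\theta_n(x)-\theta_n(y)|\in[\tfrac14,\tfrac12]$ — a positive fraction of them — one gets $|v_n^{\varepsilon}(t_n,x)-v_n^{\varepsilon}(t_n,y)|\gtrsim\kappa_n n^{\frac32-s}$ there, hence
\[
\norm{v_n^{\varepsilon}(t_n)}_{H^s}^2 \ge \norm{v_n^{\varepsilon}(t_n)}_{\dot H^s}^2 \gtrsim (\kappa_n n^{\frac32-s})^2\, N_0^{3+2s}\, n^{-3}N_0^{-3} = \kappa_n^2\, n^{-2s}N_0^{2s} = \kappa_n^2\,(\lambda_n^2 t_n)^{2s},
\]
which is exactly the claimed bound. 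The main obstacle is this last step: one must pin down the oscillation frequency $N_0$ of $v_n^{\varepsilon}(t_n)$ and run the difference-quotient estimate carefully — the non-degeneracy of $R_n$ and the fact that the amplitude varies slowly, $|v_n^{\varepsilon}(0,x)-v_n^{\varepsilon}(0,y)|\lesssim N_0^{-1}\kappa_n n^{\frac52-s}\ll\kappa_n n^{\frac32-s}$, compared with the oscillation enter here — whereas the upper bounds in~(2) are comparatively routine calculus once the $\mu$-powers are tracked.
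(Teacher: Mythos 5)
Your part~(2) is essentially the same as the paper's: differentiate the convolution by placing the derivatives either on $\varphi$ or on $\rho_\varepsilon$, apply Young's inequality (with $L^1\ast L^2\hookrightarrow L^2$ when the derivatives go on $\rho_\varepsilon$, and $L^2\ast L^2\hookrightarrow L^\infty$ for the sup-bound), take the minimum to obtain the $\mu^{m+3/2}$ factor, and then propagate through the ODE factor $V(t|v_n^\varepsilon(0)|^{p-1})$ by Leibniz/Fa\`a di Bruno, each hit on the phase contributing one $t\lambda_n^2$. Your bookkeeping of the $\mu$-powers (the phase factors always contribute at least zero extra powers, so $m+\tfrac32$ always survives) is correct.

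Your part~(1), however, is a genuinely different argument. The paper interpolates the unknown $H^s$-norm between the integer Sobolev norms $H^1$ and $H^2$ (resp.\ $H^2$ and $H^3$ when $1\le s<\frac32$), uses the \emph{upper} bound from part~(2) for the higher norm, and then lower-bounds the integer norm by isolating the leading Leibniz term $t_n|v_n^\varepsilon(0)|^{p-1}\nabla v_n^\varepsilon(0)\,V'$, whose $L^2$-norm is computed explicitly after rescaling. Your route is more hands-on: you go straight for the Gagliardo--Slobodeckij seminorm, identify the oscillation scale $N_0\sim n\,t_n\lambda_n^2$, isolate a transition annulus $R_n$ of measure $\sim n^{-3}$ where both the amplitude and the phase gradient are non-degenerate, and read off the contribution of pairs $(x,y)$ at distance $N_0^{-1}$. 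The arithmetic $\kappa_n^2(\lambda_n^2t_n)^{2s}$ checks out, the linear phase approximation is legitimate since the quadratic error is $O(n/N_0)\to0$, and the amplitude variation $\lesssim\kappa_n n^{5/2-s}N_0^{-1}\ll\kappa_n n^{3/2-s}$ is indeed negligible. What your version buys is a more direct picture of what is oscillating and at what scale, and it avoids having to know the correct direction of the interpolation inequality; what the paper's version buys is brevity and the fact that it only manipulates objects already estimated in part~(2).

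One genuine gap: the first-difference formula $\norm{f}_{\dot H^s}^2\sim\iint|f(x)-f(y)|^2|x-y|^{-3-2s}\,dx\,dy$ that you invoke is only valid for $0<s<1$, whereas the lemma is stated for the full range $0\le s<\frac32$ (used e.g.\ for $p\ge7$, where $s_c>1$). The paper handles $1\le s<\frac32$ by switching to $H^2$--$H^3$ interpolation. In your framework you would need to either pass to second differences, or replace $v_n^\varepsilon(t_n)$ by $\nabla v_n^\varepsilon(t_n)$ and work in $\dot H^{s-1}$; either fix is routine (the derivative hits the phase, multiplying the amplitude by $N_0$ and reproducing the same final count), but as written the proposal silently assumes $s<1$. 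You should also note that the existence of a set of positive measure in $\{\varphi>0\}\cap\{|\nabla\varphi|>0\}$ requires $\varphi$ to be non-degenerate, which the paper leaves implicit; the interpolation route sidesteps this because it only needs $\norm{\varphi^{p-1}\nabla\varphi}_{L^2}>0$, which follows at once from $\varphi\not\equiv0$.
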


The factor $\min\left(1,\frac{1}{\varepsilon n}\right)^{m+\frac{3}{2}}$ obtained on top of Lemma 2.1 in~\cite{SunTzvetkov2020pathological} comes from the fact that we get better estimates when estimating the derivatives and the $L_x^{\infty}$ norm of $\rho_{\varepsilon}$ instead of those of $v_n$ when $\varepsilon$ is large compared to $\frac{1}{n}$.
\begin{proof}
We first establish the upper bounds.
\subsubsection{Upper bounds}
Let $T_n(f):=f(n\cdot)$ be the scaling operator.
We have
\begin{align*}
v_n^{\varepsilon}(0,x)
	&=\kappa_n n^{\frac{3}{2}-s} \int\varphi(n(x-x'))\frac{1}{\varepsilon^3}\rho\left(\frac{x'}{\varepsilon}\right)\d x'\\
	&=\frac{\kappa_n n^{\frac{3}{2}-s}}{\varepsilon^3}(T_n\varphi\ast T_\frac{1}{\epsilon}\rho)(0,x),
\end{align*}
so that when $\alpha=(\alpha_1,\alpha_2,\alpha_3)\in\N^3$, denoting $m=\abs{\alpha}=\alpha_1+\alpha_2+\alpha_3$,
\begin{align*}
\partial^{\alpha}v_n^{\varepsilon}(0,x)
	&=\kappa_n n^{\frac{3}{2}-s} n^{m} \int T_n(\partial^{\alpha}\varphi)(x-x')\frac{1}{\varepsilon^3}\rho\left(\frac{x'}{\varepsilon}\right)\d x'	\\
	&=\frac{\kappa_n n^{\frac{3}{2}-s}}{\varepsilon^3}n^{m}(T_n(\partial^\alpha\varphi)\ast T_\frac{1}{\epsilon}\rho)(0,x).
\end{align*}
The Young's convolution inequality implies
\[
\|\partial^{\alpha}v_n^{\varepsilon}(0)\|_{L^{\infty}}
	\lesssim \kappa_n n^{\frac{3}{2}-s} n^{m}\|T_n(\partial^{\alpha}\varphi)\|_{L^{\infty}}\|\rho_{\varepsilon}\|_{L^1}
	\lesssim \lambda_n^{\frac{2}{p-1}} n^{m},
\]
\[
\|\partial^{\alpha}v_n^{\varepsilon}(0)\|_{L^{2}}
	\lesssim \kappa_n n^{\frac{3}{2}-s} n^{m}\|T_n(\partial^{\alpha}\varphi)\|_{L^{2}_{x}}\|\rho_{\varepsilon}\|_{L^1}
	\lesssim \kappa_n n^{m-s}\,.
\]
But one can also derivate the $\rho$ part
\begin{align*}
\partial^{\alpha}v_n^{\varepsilon}(0,x)
	&=\kappa_n n^{\frac{3}{2}-s} \left(\frac{1}{\varepsilon}\right)^{m} \int T_n(\varphi)(x')\frac{1}{\varepsilon^3}(\partial^{\alpha}\rho)\left(\frac{x-x'}{\varepsilon}\right)\d x' \\
	&=\frac{\kappa_n n^{\frac{3}{2}-s}}{\varepsilon^3}\left(\frac{1}{\varepsilon}\right)^{m}(T_n(\varphi)\ast T_{\frac{1}{\epsilon}}(\partial^\alpha\rho)(0,x),
\end{align*}
so that there also holds
\[
\|\partial^{\alpha}v_n^{\varepsilon}(0)\|_{L^{\infty}}
	\lesssim \kappa_n n^{\frac{3}{2}-s}\left(\frac{1}{\varepsilon}\right)^{m}\|T_n(\varphi)\|_{L^{1}}\frac{1}{\varepsilon^3}\|T_\frac{1}{\epsilon}(\partial^{\alpha}\rho)\|_{L^\infty}
	\lesssim \lambda_n^{\frac{2}{p-1}} \left(\frac{1}{\varepsilon}\right)^{m}\frac{1}{(\varepsilon n)^3},
\]
\begin{align*}
\|\partial^{\alpha}v_n^{\varepsilon}(0)\|_{L^{2}}
	&\lesssim \kappa_n n^{\frac{3}{2}-s} \left(\frac{1}{\varepsilon}\right)^{m}\|T_n(\varphi)\|_{L^{1}}\frac{1}{\varepsilon^3}\|T_\frac{1}{\epsilon}(\partial^{\alpha}\rho)\|_{L^2}\\
	&\lesssim\kappa_n n^{\frac{3}{2}-s} \left(\frac{1}{\varepsilon}\right)^{m-\frac{3}{2}}\frac{1}{(\varepsilon n)^3}\\
	&=\kappa_n n^{m-s}\frac{1}{(\varepsilon n)^{m+\frac{3}{2}}}\,.
\end{align*}
Note that when $m\leq 1$, we can still improve this estimate using that
\begin{equation*}
\|\partial^{\alpha}v_n^{\varepsilon}(0)\|_{L^{2}}
	\lesssim \kappa_n n^{\frac{3}{2}-s} \left(\frac{1}{\varepsilon}\right)^{m}\|T_n(\varphi)\|_{L^{2}}\frac{1}{\varepsilon^3}\|T_\frac{1}{\epsilon}(\partial^{\alpha}\rho)\|_{L^1}\\
	\lesssim \kappa_n n^{m-s}\frac{1}{(\varepsilon n)^3}\,.
\end{equation*}

These two observations sum up as
\[
\|\partial^{\alpha}v_n^{\varepsilon}(0)\|_{L^{\infty}}
	\lesssim \lambda_n^{\frac{2}{p-1}} n^{m}\min\left(1,\frac{1}{\varepsilon n}\right)^{m+3},
\]
\[
\|\partial^{\alpha}v_n^{\varepsilon}(0)\|_{L^{2}}
	\lesssim \kappa_n n^{m-s}\min\left(1,\frac{1}{\varepsilon n}\right)^{m+\frac{3}{2}}\,.
\]

Next, we deduce the estimates at time $t$. Using $V$ being bounded by $1$, we have
\[
\|v_n^{\varepsilon}(t)\|_{L^{\infty}}
	\lesssim \lambda_n^{\frac{2}{p-1}} \min\left(1,\frac{1}{\varepsilon n}\right)^{3},
\]
\[
\|v_n^{\varepsilon}(t)\|_{L^{2}}
	\lesssim \kappa_n n^{-s} \min\left(1,\frac{1}{\varepsilon n}\right)^{\frac{3}{2}}\,.
\]
Finally, we compute 
\begin{equation}\label{ineq:maj_dx_vn}
|\nabla v_n^{\varepsilon}(t)|
	\lesssim t|\abs{v_n^{\varepsilon}(0)}^{p-1} \nabla v_n^{\varepsilon}(0) V'(t|v_n^{\varepsilon}(0)|^{p-1})|+|\nabla v_n^{\varepsilon}(0)V(t|v_n^{\varepsilon}(0)|^{p-1})|,
\end{equation}
and
\begin{equation*}
    \begin{split}
        |\Delta v_n^{\varepsilon}(t)|
	\lesssim & t^2 |\abs{v_n^{\varepsilon}(0)}^{2p-3} (\nabla v_n^{\varepsilon}(0))^2 V''(t|v_n^{\varepsilon}(0)|^{p-1})| \\
	&+2 t |\abs{v_n^{\varepsilon}(0)}^{p-2}\abs{\nabla v_n^{\varepsilon}(0)}^2 V'(t|v_n^{\varepsilon}(0)|^{p-1})|\\
	&+t|\abs{v_n^{\varepsilon}(0)}^{p-1}\Delta v_n^{\varepsilon}(0)V'(t |v_n^{\varepsilon}(0)|^2)|\\
	&+|\Delta v_n^{\varepsilon}(0)V(t|v_n^{\varepsilon}(0)|^{p-1})|\,.
    \end{split}
\end{equation*}
Iterating, we obtain a similar formula when $\alpha\in\N^m$ is arbitrary, the leading term being the one where all the derivatives fall on the ODE profile $V$
\[
\abs{\partial^\alpha v_n^{\varepsilon}(t)}\lesssim \abs{\abs{v_n^\epsilon(0)}^{(p-1)m}\nabla v_n^\epsilon(0)}^m \abs{v_n^\epsilon(0)}t^m \abs{V^{(m)}\parent{t\abs{v_n^\epsilon(0)}^{p-1}}}\,.
\]
Since $|V|$ and its derivatives are bounded by $1$, we deduce that
\[
\|\nabla v_n^{\varepsilon}(t)\|_{L^{\infty}}\lesssim \lambda_n^{\frac{2}{p-1}} n (1+t\lambda_n^2) \min\left(1,\frac{1}{\varepsilon n}\right)^{1+3},
\]
\[
\|\Delta v_n^{\varepsilon}(t)\|_{L^{\infty}}
	\lesssim \lambda_n^{\frac{2}{p-1}} n^2(1+t\lambda_n^2 +(t\lambda_n^2)^2) \min\left(1,\frac{1}{\varepsilon n}\right)^{2+3}\,.
\]
On the $L^2$-based spaces, we also get
\[
\|\nabla v_n^{\varepsilon_n}(t)\|_{L^2}
	\lesssim \kappa_n n^{1-s}(1+t\lambda_n^2)\min\left(1,\frac{1}{\varepsilon n}\right)^{1+\frac{3}{2}},
\]
\[
\|\Delta v_n^{\varepsilon}(t)\|_{L^2}
	\lesssim \kappa_n n^{2-s}(1+(t\lambda_n^2)^2) \min\left(1,\frac{1}{\varepsilon n}\right)^{2+\frac{3}{2}}\,.
\]
The other estimates follow similarly.

\subsubsection{Lower bound} We now establish the lower bound $\|v_n^{\varepsilon}(t_n)\|_{H^s}\geq c \kappa_n(t_n\lambda_n^2)^s$  for every $\varepsilon\leq\varepsilon_n$, where we recall that $t_n\lambda_n^2\geq 1$ when $n\gg 1$. Fix $\varepsilon\leq \varepsilon_n$. When $0\leq s<1$, we have from interpolation that
\[
\|v_n^{\varepsilon}(t)\|_{H^{1}}^{2-s}
	\lesssim \|v_n^{\varepsilon}(t)\|_{H^{s}}\|v_n^{\varepsilon}(t)\|_{H^{2}}^{1-s}\,.
\]
Using the upper bound 
\[
\|v_n^{\varepsilon}(t)\|_{H^{2}}\lesssim \kappa_n n^{2-s}(1+(t\lambda_n^2)^2),
\]
it is enough to establish the lower bound 
\[
\|v_n^{\varepsilon}(t_n)\|_{H^{1}}\gtrsim  \kappa_n(t_n \lambda_n^2 ) n^{1-s}\,.
\]
Since the second term on the right-hand side in~\eqref{ineq:maj_dx_vn} has the upper bound
\[\|\nabla v_n^{\varepsilon}(0)V(t_n|v_n^{\varepsilon}(0)|^{p-1})\|_{L^2}\lesssim \kappa_n n^{1-s},\]
it is sufficient to establish the lower bound for the leading term
\[
\|t_n \abs{v_n^{\varepsilon}(0)}^{p-1} \nabla v_n^{\varepsilon}(0)V'(t_n|v_n^{\varepsilon}(0)|^{p-1})\|_{L^2}\,.
\]
Since $V'$ is of modulus one,
\begin{align*}
\|t_n \abs{v_n^{\varepsilon}(0)}^{p-1} \nabla v_n^{\varepsilon}(0)V'(t_n|v_n^{\varepsilon}(0)|^{p-1})\|_{L^2}
	&=t_n\|\abs{v_n^{\varepsilon}(0)}^{p-1} \nabla v_n^{\varepsilon}(0)\|_{L^2}\\
	&=t_n\lambda_n^{2+\frac{2}{p-1}} n\|\abs{T_n(\varphi)\ast\rho_{\varepsilon}}^{p-1}\left(T_n(\nabla \varphi)\ast\rho_{\varepsilon}\right)\|_{L^2}\\
	&=t_n\lambda_n^{2+\frac{2}{p-1}} n^{1-\frac{3}{2}}\|\abs{\varphi\ast\rho_{n\varepsilon}}^{p-1}\left(\nabla \varphi\ast\rho_{n\varepsilon}\right)\|_{L^2},
\end{align*}
the latter equality coming from the change of variable
\begin{align*}
T_nf\ast\rho_{\varepsilon}(x)
	&=T_n(f\ast\rho_{n\varepsilon}).
\end{align*}
Thanks to the choice of $\varepsilon$ such that $n\varepsilon\leq n\varepsilon_n= \frac{1}{100}$, the $L^2$ norm of $\varphi\ast\rho_{n\varepsilon}$ admits a lower bound that does not depend on $n$. Moreover, it is bounded from below by $c_0>0$ since $\varphi\ast\rho_{n\varepsilon}$ and its derivative tends to $\varphi$ and its derivative in the $L^p$ spaces when $n\varepsilon$ goes to $0$. 
To conclude, it only remains to observe that 
\[
t_n\lambda_n^{2+\frac{2}{p-1}} n^{1-\frac{3}{2}}=\kappa_n (t_n\lambda_n^2) n^{1-s}\,.
\]

When $1\leq s<\frac{3}{2}$, the idea is the same except that we need to interpolate between higher order Sobolev spaces
\[
\|v_n^{\varepsilon}(t)\|_{H^{2}}^{3-s}
	\lesssim \|v_n^{\varepsilon}(t)\|_{H^{s}}\|v_n^{\varepsilon}(t)\|_{H^{3}}^{2-s}\,.
\]

The leading term in the expression for $\|v_n^{\varepsilon}(t)\|_{H^{2}}$ is
\begin{equation*}
    \begin{split}
        \|t_n^2 \abs{v_n^{\varepsilon}(0)}^{2p-3} \abs{\nabla v_n^{\varepsilon}(0)}^2V''(t_n|v_n^{\varepsilon}(0)|^{p-1})\|_{L^2}
	&=t_n^2\|\abs{v_n^{\varepsilon}(0)}^{2p-3} (\nabla v_n^{\varepsilon}(0))^2\|_{L^2}\\
	&\hspace{-35pt}=t_n^2\lambda_n^{\frac{2(2p-1)}{p-1}} n^2\|\abs{T_n(\varphi)\ast\rho_{\varepsilon}}^{2p-3}\left(T_n(\nabla \varphi)\ast\rho_{\varepsilon}\right)^2\|_{L^2}\\
	&\hspace{-35pt}=t_n^2\lambda_n^{4+\frac{2}{p-1}} n^{2-\frac{3}{2}}\|\abs{\varphi\ast\rho_{n\varepsilon}}^{2p-3}\left(\nabla \varphi\ast\rho_{n\varepsilon}\right)^2\|_{L^2}\,,
    \end{split}
\end{equation*}
where $t_n^2\lambda_n^{4+\frac{2}{p-1}} n^{2-\frac{3}{2}}=\kappa_n (t_n\lambda_n^2)^2 n^{2-s}$.
To conclude, it only remains to use the upper bound
\[
\|v_n^{\varepsilon}(t_n)\|_{H^{3}}\lesssim \kappa_n n^{3-s}(t_n\lambda_n^2)^3\,.\qedhere
\]
\end{proof}

\subsection{Pathological set}\label{sec:pathological}
The pathological set is defined as
\begin{equation}
\label{eq:patho}
\mathcal{P}:=\{f\in H^s(\R^3)\mid \underset{\epsilon,t\to0}{\limsup}\,\|\Phi(t)(\rho_\epsilon\ast f)\|_{H^s}\to\infty\}\,.
\end{equation}
In this part, we prove that the pathological contains a dense $G_\delta$ set, that stems from the {\it ``tanghuru''} construction~\cite{SunTzvetkov2020pathological} consisting in the superposition of inflating bubbles. We fix $a\gg1$. For arbitrary $k\in\N$, $x_k\in\R^3$, we set $n_k:=e^{a^k}$, and define the $k$-th bubble centered at $x_k\in\R^3$, and concentrated at scale $n_k^{-1}$ $v_{0,k}$, writes
\[
v_{0,k}(x):=v_{n_k}(0,x-x_k)
	=\log(n_k)^{-\gamma}n_k^{\frac{3}{2}-s}\varphi(n_k(x-x_k))\,.
\]
\begin{rk}[Position of the bubbles]
In the construction from~\cite{SunTzvetkov2020pathological}, the $k$-th bubble concentrates in the point $x_k=\frac{1}{k}$, so that the distance between two consecutive bubbles is much larger than the scale of the bubble. Namely, $\abs{x_k-x_{k-1}}\sim \frac{1}{k^2}\gg e^{-a^{k}}$. Hence, the different bubbles are far enough and do not interact in short time, thanks to the finite propagation speed. Since we do not make use of this, we take arbitrary $x_k$ in the construction. Note that when $x_k=0$ for all $k$, the construction is radial and provides a pathological set even in the radial case.
\end{rk}

\begin{mydef}[Dense subset of the pathological set]
We denote by $S$ the set of initial data $f_0$ that can be decomposed under the form
\[
    f_0=u_0+\sum_{k=k_0}^{\infty}v_{0,k}\,, \quad k_0\geq 1\,,\quad u_0\in \classeC_c^{\infty}(\R^3)\,.
\]
\end{mydef}
Our aim is to show that the pathological set contains $S$, which is dense in $H^s(\R^3)$, and a $G_\delta$ subset as soon as every smooth solution to~\eqref{eq:NLS} is global. For this, we first establish upper bounds for the series in the definition of $S$, after regularization by convolution. Recall that we have set up the parameters in~\eqref{eq:parameters}, and, given $k\in\N$, we may denote for simplicity
\[
n = n_k = e^{a^k}\,,\quad\epsilon_k = \epsilon_{n_k}\,,\quad t_k = t_{n_k}\,,
\]
with $a\gg1$ to be chosen later in the analysis. At time $t_{n_k}$ and at scale $\epsilon_{n_k}$ we expect the $k$-th bubble to be responsible for the norm inflation, while the other bubbles concentrated at the larger scales $l^{-1}> k^{-1}$ and at smaller scales $l^{-1}< k^{-1}$ do not inflate.

\begin{lem}[$H^m$-bounds for initial data in the pathological set]
\label{lem:scale}
Let $k_0\geq1$, $k\geq k_0$, and denote $\epsilon_k=(100n_k)^{-1}$, with $n_k=e^{a^k}$\,. If $m<s$, there holds
\begin{align}
\label{eq:L2-k-1}
    \sum_{l=k_0}^{k-1}\norm{\rho_{\epsilon_k}\ast v_{0,l}}_{H^m(\R^3)}&\lesssim 1 \,,\\
\label{eq:L2-k+1}
    \sum_{l=k+1}^{\infty}\norm{\rho_{\epsilon_k}\ast v_{0,l}}_{H^m(\R^3)}&\lesssim n_{k+1}^{m-s} \left(\frac{n_k}{n_{k+1}}\right)^\frac{3}{2}\,.
\end{align}
Moreover, for every $m>s$, we have
\begin{align}
\label{eq:Hm-k-1}
    \sum_{l=k_0}^{k-1}\norm{\rho_{\epsilon_k}\ast v_{0,l}}_{H^m(\R^3)}&\lesssim n_{k-1}^{m-s} \,,\\
    \label{eq:Hm-k+1}
    \sum_{l=k+1}^{\infty}\norm{\rho_{\epsilon_k}\ast v_{0,l}}_{H^m(\R^3)}&\lesssim n_k^m n_{k+1}^{-s} \left(\frac{n_k}{n_{k+1}}\right)^\frac{3}{2}\,.
\end{align}
\end{lem}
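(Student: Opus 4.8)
The strategy is to estimate each single term $\norm{\rho_{\epsilon_k}\ast v_{0,l}}_{H^m}$ via Lemma~\ref{lem:growth_vn} (at time $t=0$), paying attention to the value of $\epsilon_k n_l$, and then sum the resulting geometric-type series using $n_l = e^{a^l}$. Recall from the second part of Lemma~\ref{lem:growth_vn} that, for $v_{0,l}=v_{n_l}(0,\cdot - x_l)$ (translation does not affect Sobolev norms), one has $\norm{|\nabla|^m v_{n_l}^{\epsilon}(0)}_{L^2}\lesssim \kappa_{n_l} n_l^{m-s}\min(1,\tfrac{1}{\epsilon n_l})^{m+3/2}$, with $\kappa_{n_l}=(\log n_l)^{-\gamma}=a^{-\gamma l}\le 1$. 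Since $\epsilon_k=(100n_k)^{-1}$, the key quantity is $\epsilon_k n_l = n_l/(100 n_k)$, which is $\lesssim 1$ when $l\le k$ (so the $\min$ is $1$, but when $l<k$ it is actually $\ll1$ and the $\min$ equals $n_l/n_k$ up to constants) and is $\gg1$ when $l>k$ (so the $\min$ equals $100 n_k/n_l$). I will organize the proof into the two regimes $l\le k-1$ and $l\ge k+1$, and within each, treat $m<s$ and $m>s$.

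\medskip

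For the \emph{low-scale} bubbles $l\le k-1$: here $\epsilon_k n_l \lesssim n_l/n_k \le 1/100$, so $\min(1,\tfrac{1}{\epsilon_k n_l})^{m+3/2}\lesssim (n_k/n_l)^{m+3/2}$ (we derivate the $\rho$-factor, as in the computation of the upper bounds in Lemma~\ref{lem:growth_vn}), giving
\[
\norm{\rho_{\epsilon_k}\ast v_{0,l}}_{H^m}\lesssim \sum_{j\le m}\kappa_{n_l} n_l^{j-s}\Bigl(\frac{n_k}{n_l}\Bigr)^{j+\frac32}\lesssim n_l^{-s}\Bigl(\frac{n_k}{n_l}\Bigr)^{\frac32}\,n_k^{m}\bigl(1+o(1)\bigr),
\]
where the dominant contribution is $j=m$ when $m>0$. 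For $m<s$ this reads $\lesssim n_k^{m} n_l^{-s-3/2}n_k^{3/2}= n_k^{m+3/2}n_l^{-s-3/2}$; summing over $l\le k-1$ the series is geometric in $n_l^{-1}$ and the total is $\lesssim n_k^{m+3/2}n_{k-1}^{-s-3/2}$. Since $n_{k-1}=n_k^{1/a}$ and $a\gg 1$, this is $\lesssim 1$, proving~\eqref{eq:L2-k-1}. For $m>s$, the bound $n_k^{m+3/2}n_{k-1}^{-s-3/2}$ equals $n_{k-1}^{m-s}\cdot n_k^{m+3/2}n_{k-1}^{-m-3/2}=n_{k-1}^{m-s}(n_{k-1}/n_k)^{m+3/2}\lesssim n_{k-1}^{m-s}$, giving~\eqref{eq:Hm-k-1}. (One must check the sum is dominated by its last term $l=k-1$, which follows from $n_l^{-s-3/2}$ decaying super-geometrically.)

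\medskip

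For the \emph{high-scale} bubbles $l\ge k+1$: now $\epsilon_k n_l \gg 1$ and we keep the $\rho$-factor undifferentiated, so $\min(1,\tfrac{1}{\epsilon_k n_l})^{m+3/2}\lesssim (n_k/n_l)^{m+3/2}$ as well, and Lemma~\ref{lem:growth_vn} gives $\norm{\rho_{\epsilon_k}\ast v_{0,l}}_{H^m}\lesssim \kappa_{n_l}n_l^{m-s}(n_k/n_l)^{m+3/2}\lesssim n_l^{-s-3/2}n_k^{m+3/2}$, the \emph{same} type of bound as before but now summed over $l\ge k+1$; the series is again geometric and dominated by $l=k+1$, yielding $\lesssim n_{k+1}^{-s-3/2}n_k^{m+3/2}$. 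For $m<s$ rewrite this as $n_{k+1}^{m-s}\cdot n_{k+1}^{-m-3/2}n_k^{m+3/2}=n_{k+1}^{m-s}(n_k/n_{k+1})^{m+3/2}$; since $m+3/2>3/2$ this is $\lesssim n_{k+1}^{m-s}(n_k/n_{k+1})^{3/2}$, which is~\eqref{eq:L2-k+1}. For $m>s$ rewrite the same quantity as $n_k^m n_{k+1}^{-s}\cdot n_k^{3/2}n_{k+1}^{-3/2}=n_k^m n_{k+1}^{-s}(n_k/n_{k+1})^{3/2}$, which is~\eqref{eq:Hm-k+1}.

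\medskip

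\textbf{Main obstacle.} The genuinely delicate point is to extract the correct powers of $n_k$ versus $n_l$ from the factor $\min(1,\tfrac{1}{\epsilon n})^{m+3/2}$ in Lemma~\ref{lem:growth_vn} and to be sure that in each regime it is legitimate to place \emph{all} $m+3/2$ copies of the gain on the $\rho$-side (respectively: when $l<k$ we must differentiate $\rho$ rather than $\varphi$ to see the smoothing; when $l>k$ we must leave $\rho$ as an $L^1$ mass and let $\varphi$ carry the derivatives). Once the single-bubble bound $\norm{\rho_{\epsilon_k}\ast v_{0,l}}_{H^m}\lesssim n_k^{m+3/2}n_l^{-s-3/2}$ (valid for \emph{all} $l\neq k$) is in hand, the summation is routine because $l\mapsto n_l^{-s-3/2}=e^{-(s+3/2)a^l}$ decays doubly-exponentially, so each series is controlled by its extreme term, and the four stated estimates follow by algebraic rearrangement using $n_{k\pm1}=n_k^{a^{\pm1}}$ and $a\gg1$. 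A secondary bookkeeping point is the appearance of the logarithmic factors $\kappa_{n_l}=a^{-\gamma l}$ and the lower-order terms $j<m$ in the $H^m$-norm; these only help (they are $\le 1$, resp. subdominant when $a$ is large), so they can be absorbed into the implicit constants without affecting the powers.
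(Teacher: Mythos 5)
Your treatment of the \emph{high-scale} bubbles $l\ge k+1$ is correct and essentially coincides with the paper's argument: there $\varepsilon_k n_l\gg 1$, so the factor $\min(1,\tfrac{1}{\varepsilon_k n_l})^{m+\frac32}$ genuinely supplies the gain $(n_k/n_l)^{m+\frac32}$, the series is dominated by its first term $l=k+1$ (super-geometric decay), and the algebraic rearrangements give~\eqref{eq:L2-k+1} and~\eqref{eq:Hm-k+1}.

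However, the treatment of the \emph{low-scale} bubbles $l\le k-1$ contains a genuine error, and the direction of your reasoning is inverted. For $l<k$ you have $\varepsilon_k n_l\le \tfrac{1}{100}<1$, hence $\min\bigl(1,\tfrac{1}{\varepsilon_k n_l}\bigr)=1$; there is \emph{no} smoothing gain from the convolution here, because the bubble $v_{0,l}$ lives at scale $n_l^{-1}\gg\varepsilon_k$, i.e.\ it is already coarser than $\rho_{\varepsilon_k}$. Your claim ``we derivate the $\rho$-factor to see the smoothing'' has the regimes swapped: differentiating $\rho_{\varepsilon_k}$ costs $\varepsilon_k^{-1}\sim n_k$ per derivative, which is \emph{worse} than the cost $n_l$ from differentiating the profile $\varphi(n_l\,\cdot)$ when $n_l<n_k$; the $\min$ in Lemma~\ref{lem:growth_vn} automatically selects the cheaper option, which for $l<k$ is simply the trivial one. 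Replacing $\min(\cdots)=1$ by the larger quantity $(n_k/n_l)^{m+\frac32}$ is a valid but catastrophic overestimate: your per-bubble bound $n_k^{m+\frac32}n_l^{-s-\frac32}$, summed over $l\le k-1$, is dominated by $l=k_0$ (not by $l=k-1$ --- for a super-geometrically \emph{decaying} sequence the sum is controlled by its \emph{first} term), so the total is $\lesssim n_k^{m+\frac32}n_{k_0}^{-s-\frac32}\sim n_k^{m+\frac32}$, which is unbounded in $k$ and certainly not $\lesssim 1$. The further manipulation ``$n_k^{m+\frac32}n_{k-1}^{-s-\frac32}=n_{k-1}^{m-s}(n_{k-1}/n_k)^{m+\frac32}$'' also contains a ratio flipped the wrong way: $n_k^{m+\frac32}n_{k-1}^{-m-\frac32}=(n_k/n_{k-1})^{m+\frac32}\gg1$, not $\ll1$. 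The correct argument for $l\le k-1$ is the straightforward one the paper uses: with $\min=1$, each term is $\lesssim \kappa_{n_l}n_l^{m-s}$; for $m<s$ the series decreases and is controlled by $l=k_0$, giving~\eqref{eq:L2-k-1}, while for $m>s$ it increases and is controlled by $l=k-1$, giving~\eqref{eq:Hm-k-1}.
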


\begin{rk}
\label{rem:eps}
Note that the estimate for the sum of the profiles concentrated at scale smaller than $n_k$ is only valid when $\varepsilon\geq\varepsilon_k$. As a consequence, we cannot evidence the norm inflation along the whole sequence of regularized initial data $(\rho_\epsilon\ast f_0)_{\epsilon>0}$, but only along the subsequence $(\rho_{\epsilon_k}\ast f_0)_{k\geq k_0}$, with $\epsilon_k=(100n_k)^{-1}$. For this reason, we only prove~\eqref{eq:limsup} instead of 
\begin{equation}
\label{eq:lim}
    \underset{T,\epsilon\to0}{\lim}\norm{u^\epsilon}_{L_t^\infty(\intervalcc{0}{T};H^s(\R^3))} =\infty\,.
\end{equation}
The norm inflation mechanism was also exhibited along a subsequence in  ~\cite{burq-tzvetkov-2008I,BurqGerardTzvetkov2005multilinear,Xia2021generic}. However, thanks to the finite propagation speed,~\cite{SunTzvetkov2020pathological} proved that the norm inflation for energy-subcritical nonlinear wave equations occurs along the whole sequence of regularized data. Nevertheless, defining the pathological set with the convergence type~\eqref{eq:limsup} is enough for our purpose. Indeed, in the context of probabilistic well-posedness, the convergence of the regularized strong solutions to the local solution from a randomized initial data holds for any sequence $\epsilon\to0$. Therefore, the pathological set and the good set of randomized initial data are disjoint.
\end{rk}

\begin{proof}
 When $m<s$, the large bubbles (i.e. with small index $l$ compared to $k$) contribute the most to the mass of the initial data.
\begin{equation*}
    \begin{split}
\sum_{l=k_0}^{k-1}\norm{\rho_{\epsilon_k}\ast v_{0,l}}_{H^m}
\lesssim \sum_{l=k_0}^{k-1}\kappa_{n_l} n_l^{m-s}
\lesssim n_{k_0}^{m-s}\,,
    \end{split}
\end{equation*}
whereas
\begin{equation*}
 \sum_{l=k+1}^\infty\norm{\rho_{\epsilon_k}\ast v_{0,l}}_{H^m} \lesssim
	 \sum_{l=k+1}^\infty \kappa_{n_l}n_l^{m-s}\left(\frac{n_k}{n_l}\right)^{\frac{3}{2}}
 	\lesssim n_{k+1}^{m-s}\left(\frac{n_k}{n_{k+1}}\right)^{\frac{3}{2}}\,.
\end{equation*}
This proves~\eqref{eq:L2-k-1} and~\eqref{eq:L2-k+1}, respectively. Next, we assume that $m>s$ and we establish~\eqref{eq:Hm-k-1} and~\eqref{eq:Hm-k+1}. It follows from Lemma~\ref{lem:growth_vn} that
\begin{equation*}
            \sum_{l=k_0}^{k-1}\|\rho_{\varepsilon_k}\ast v_{0,l}\|_{H^{m}}
	\lesssim\sum_{l= k_0}^{k-1}\kappa_{l} n_{k_l}^{m-s}\min\left(1,\frac{1}{\varepsilon_k n_l}\right)^{m+\frac{3}{2}}\lesssim n_k^{m-s}\,,
\end{equation*}
Similarly,
\begin{equation*}
\sum_{l= k+1}^{\infty}\kappa_{n_l} n_l^{m-s}\min\left(1,\frac{1}{\varepsilon_k n_l}\right)^{m+\frac{3}{2}}
	\lesssim \left(\frac{1}{\varepsilon_k}\right)^{m}\sum_{l= k+1}^{\infty}\kappa_{n_l}n_{l}^{-s}(\varepsilon_kn_l)^{-\frac{3}{2}}\\
	\lesssim n_k^{m}n_{k+1}^{-s}\parent{\frac{n_k}{n_{k+1}}}^{\frac{3}{2}}\,.
\end{equation*}
\end{proof}

\subsection{Perturbative analysis}\label{sec:perturbative-analysis}
In this part, we fix  $f_0\in S$ and consider the solution  $u^{\epsilon_k}$ to~\eqref{eq:NLS} with initial data $f_0\ast\rho_{\epsilon_k}$. We prove that in the scaling-supercritical regime $s<s_c$, and up to the time $t_{n_k}$, the nonlinear part of $u^{\epsilon_k}$ dominates the dispersion. Specifically, we prove that $u^{\epsilon_k}$ behaves like the ODE profile $v_k^{\epsilon_k}$, which is at scale ${n_k}$. Therefore, $u^{\epsilon_k}$ experiences a norm inflation at time $t_{n_k}$ as well. To do so, we perform a perturbative analysis by proving some a priori energy estimates on the difference between the ODE profile and $u^\epsilon_k$.

As a byproduct of the analysis, we can deduce from the a priori energy estimates combined with a standard continuity argument that the regularized solution $u^{\epsilon_k}$ exists in $C(\intervalcc{0}{t_{n_k}};H^2(\R^3))$.

\begin{prop}[Perturbative analysis]
\label{prop:perturbative} Let $k\geq 1$, $n_k=e^{a^k}$, $\varepsilon_k:=(100n_k)^{-1}$. When $s<s_c$, the local solution $u^{\epsilon_k}$ to~\eqref{eq:NLS} with initial data $f_0\ast \rho_{\epsilon_k}$ can be extended in $H^2(\R^3)$ up to time $t_{n_k}$. Moreover, there exists $C>0$ such that for $k$ large enough, 
\begin{equation}
\label{eq:pert}
    \norm{u^{\epsilon_k}(t_{n_k})-v_n^{\epsilon_k}(t_{n_k})}_{H^s(\R^3)}\leq C\,.
\end{equation}
\end{prop}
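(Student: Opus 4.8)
The plan is to run a standard small-dispersion / perturbative argument in the semiclassical energy associated with the scale $n_k$, but organized so that the convolution parameter $\epsilon_k = (100 n_k)^{-1}$ does the work that finite propagation speed did in \cite{SunTzvetkov2020pathological}. Write $n = n_k$, $t_n = t_{n_k}$, and decompose the regularized initial data as $f_0 \ast \rho_{\epsilon_k} = v_k^{\epsilon_k}(0) + w_0$, where $w_0 = \rho_{\epsilon_k}\ast u_0 + \sum_{l \neq k} \rho_{\epsilon_k}\ast v_{0,l}$. By Lemma~\ref{lem:scale} (the cases $m<s$ and $m>s$, interpolated, together with the trivial bound on $u_0\in\classeC_c^\infty$) one gets $\norm{w_0}_{H^s} \lesssim 1$ and, more importantly, a bound on $\norm{w_0}_{H^2}$ that is much smaller than $\norm{v_k^{\epsilon_k}(0)}_{H^2} \sim \kappa_n n^{2-s}$ — precisely because the small-scale bubbles $l > k$ are smoothed by $\rho_{\epsilon_k}$ (the factor $\min(1,1/(\epsilon n))^{m+3/2}$) and the large-scale bubbles $l<k$ are genuinely low-frequency at scale $n$. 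This is the quantitative incarnation of the paper's key observation.

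Next I would set $r := u^{\epsilon_k} - v_k^{\epsilon_k}$ and write the equation it solves: $r$ has zero initial data, and $i\partial_t r + \Delta r = -\Delta v_k^{\epsilon_k} + \sigma(|u^{\epsilon_k}|^{p-1}u^{\epsilon_k} - |v_k^{\epsilon_k}|^{p-1}v_k^{\epsilon_k}) + (\text{the } w_0 \text{ data carried along})$; more precisely it is cleaner to let $r = u^{\epsilon_k} - v_k^{\epsilon_k} - e^{it\Delta}w_0$ so that $r(0)=0$, absorbing the linear evolution of the perturbation, and noting $\norm{e^{it\Delta}w_0}_{H^s}=\norm{w_0}_{H^s}\lesssim 1$. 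One then runs an energy estimate for $r$ in the semiclassical norm $\|r\|_{L^2} + n^{-1}\|\nabla r\|_{L^2} + n^{-2}\|\Delta r\|_{L^2}$ (only two derivatives are needed in $\R^3$ to control $L^\infty$, as the authors remark). The forcing term $\Delta v_k^{\epsilon_k}$ contributes, after integrating over $[0,t_n]$ and using the upper bounds of Lemma~\ref{lem:growth_vn} at scale $\epsilon = \epsilon_k$ (so the $\min$-factor is $\sim 1$), something of size $t_n \cdot \kappa_n n^{2-s}(1+(t_n\lambda_n^2)^2)\cdot n^{-2} \sim \kappa_n (t_n\lambda_n^2)^2 n^{-s}$; choosing the exponents $0<\gamma<\beta<1$ in \eqref{eq:parameters} appropriately (as in \cite{SunTzvetkov2020pathological}) this is $\lesssim 1$ in $H^s$ while the bubble's own $H^s$-norm $\gtrsim \kappa_n(t_n\lambda_n^2)^s \to \infty$. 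The nonlinear difference term is handled by the algebra/Moser estimate for $|u|^{p-1}u$ (here $p$ odd makes this a polynomial, hence clean), bounding $\||u^{\epsilon_k}|^{p-1}u^{\epsilon_k} - |v_k^{\epsilon_k}|^{p-1}v_k^{\epsilon_k}\|$ by $(\|u^{\epsilon_k}\|_{L^\infty}+\|v_k^{\epsilon_k}\|_{L^\infty})^{p-1}\|r + e^{it\Delta}w_0\|$ in the relevant norms, with $\|v_k^{\epsilon_k}(t)\|_{L^\infty}\lesssim \lambda_n^{2/(p-1)}$ and a comparable a priori bound on $\|u^{\epsilon_k}\|_{L^\infty}$ coming from the bootstrap; this produces a Gronwall factor $\exp(C t_n \lambda_n^2) = \exp(C\log(n)^{(\beta-\gamma)(p-1)})$, which is subpolynomial in $n$ and is absorbed by the logarithmic gains built into $\kappa_n$.

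The argument is closed by a continuity/bootstrap: assume $\|r(t)\|$ in the semiclassical norm stays $\leq 1$ on a maximal subinterval; the estimates above then show it is actually $\leq 1/2$ there, so by the standard continuity argument the bound persists up to $t_n$, which simultaneously furnishes the claimed extension of $u^{\epsilon_k}$ to $C([0,t_n];H^2(\R^3))$ (the $H^2$ bound follows from $n^{-2}\|\Delta r\|_{L^2}\lesssim 1$ plus the known $H^2$ bound on $v_k^{\epsilon_k}(t_n)$ from Lemma~\ref{lem:growth_vn} and on $e^{it\Delta}w_0$). Finally $\|u^{\epsilon_k}(t_n) - v_k^{\epsilon_k}(t_n)\|_{H^s} \leq \|r(t_n)\|_{H^s} + \|e^{it_n\Delta}w_0\|_{H^s} \lesssim 1$, which is \eqref{eq:pert}. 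The main obstacle is the bookkeeping of the Gronwall exponential against the logarithmic weights: one must check that the window $0<\gamma<\beta<1$ can be chosen so that (i) $t_n\lambda_n^2 = \log(n)^{(\beta-\gamma)(p-1)}\to\infty$ (to get growth), (ii) $\kappa_n(t_n\lambda_n^2)^2 n^{-s}\to 0$ wait — rather the $H^s$-scaled forcing $\kappa_n(t_n\lambda_n^2)^2 n^{s-s}=\kappa_n(t_n\lambda_n^2)^2\cdot n^{0}$ must be controlled, i.e. one needs $s_c>s$ to get the genuine negative power of $n$ from the mismatch between $t_n\lambda_n^2$ and the $n$-powers, and (iii) $e^{Ct_n\lambda_n^2}\kappa_n^{-1}\cdot(\text{small power of }n) \to 0$; this is exactly the computation performed in \cite{SunTzvetkov2020pathological}, adapted here with the $\min(1,1/(\epsilon_k n))$ factors set to $1$ since $\epsilon_k \sim n^{-1}$, and it is where the scaling-supercriticality $s<s_c$ is used decisively.
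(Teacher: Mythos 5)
Your proposal follows essentially the same route as the paper: decompose the regularized data around the scale-$n_k$ bubble, use a semiclassical energy $\norm{r}_{L^2}+n^{-2}\norm{\Delta r}_{L^2}$, apply Lemma~\ref{lem:growth_vn} at scale $\epsilon_k\sim n^{-1}$ (so the $\min$-factor is $1$), close by bootstrap and Gronwall, and observe that $s<s_c$ is what makes the $\Delta v_n^{\epsilon_k}$ source term and the cross-terms decay by a genuine negative power of $n$. The one organizational difference is where you put the small-scale bubbles $l>k$: you fold \emph{all} $l\neq k$ bubbles into the linear correction $e^{it\Delta}w_0$ so that $r(0)=0$, whereas the paper's $u_L^{\epsilon_k}$ contains only the low-frequency part ($u_0$ and $l<k$), leaving $w(0)=\sum_{l>k}v_{0,l}\ast\rho_{\epsilon_k}$ as (small) initial data for $w$ and verifying $E_n(0)\lesssim n^{-c_0}$. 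Both bookkeepings work, because the high-$l$ bubbles are crushed by $\rho_{\epsilon_k}$: the smallness you need shows up either as $E_n(0)\ll1$ (paper) or as negligible source terms from $e^{it\Delta}w_0$ (your version). Two points worth tightening in your write-up: (i) your displayed heuristic for the $\Delta v_k^{\epsilon_k}$ contribution misplaces a factor — $t_n\cdot\kappa_n n^{2-s}(t_n\lambda_n^2)^2\cdot n^{s-2}$ is $t_n\kappa_n(t_n\lambda_n^2)^2$, not $\kappa_n(t_n\lambda_n^2)^2 n^{-s}$, and the correct route (as in the paper's estimate~\eqref{eq:incr-source}) is to bound $n^s\norm{\Delta v_n^{\epsilon_k}}_{L^2}+n^{s-2}\norm{\Delta^2 v_n^{\epsilon_k}}_{L^2}\lesssim n^{(3/2-s)(p-1)-\delta_0}$ with $\delta_0>0$ coming from $2<(\tfrac32-s)(p-1)$, which is exactly $s<s_c$; and (ii) you do not mention the role of taking $a\gg1$, which the paper needs so that $n_{k-1}/n_k=n_k^{-\theta}$ with $\theta=1-1/a$ close enough to $1$ to make the cross-terms between $u_L^{\epsilon_k}$ and $v_n^{\epsilon_k}$ decay, i.e.\ the condition $s<\theta(\tfrac32-s)(p-1)$ in~\eqref{eq:c}. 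Neither is a gap in the \emph{idea}; both are bookkeeping details that the paper carries out explicitly.
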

\begin{cor}[Norm inflation]
\label{cor:inflation}
For all $0<s<s_c$ there exist $0<c\ll1$ and a suitable choice of the parameters $\beta,\gamma$, such that for any $k$ large enough we have
\[
\norm{u^{\epsilon_k}(t_{n_k})}_{H^s} \geq c\log(n_k)^c\,.
\]
\end{cor}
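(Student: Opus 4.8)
The plan is to combine the lower bound on the bubble from Lemma~\ref{lem:growth_vn}(1) with the perturbative estimate~\eqref{eq:pert} from Proposition~\ref{prop:perturbative}, using the triangle inequality in the reversed direction: $\norm{u^{\epsilon_k}(t_{n_k})}_{H^s}\geq\norm{v_n^{\epsilon_k}(t_{n_k})}_{H^s}-\norm{u^{\epsilon_k}(t_{n_k})-v_n^{\epsilon_k}(t_{n_k})}_{H^s}\geq c\kappa_{n_k}(\lambda_{n_k}^2 t_{n_k})^s - C$. So everything reduces to showing that the main term $\kappa_{n_k}(\lambda_{n_k}^2 t_{n_k})^s$ grows at least like a positive power of $\log(n_k)$, which will dominate the fixed constant $C$ for $k$ large.

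First I would substitute the definitions. Recall $\kappa_n=(\log n)^{-\gamma}$ and from~\eqref{eq:parameters} that $\lambda_n^2 t_n=\log(n)^{(\beta-\gamma)(p-1)}$. Hence
\[
\kappa_{n_k}(\lambda_{n_k}^2 t_{n_k})^s = (\log n_k)^{-\gamma}\,\bigl(\log(n_k)^{(\beta-\gamma)(p-1)}\bigr)^s = (\log n_k)^{\,s(\beta-\gamma)(p-1)-\gamma}\,.
\]
So the exponent of $\log n_k$ in the lower bound is $\delta:=s(\beta-\gamma)(p-1)-\gamma$. The task is to choose $0<\gamma<\beta<1$ (consistently with whatever constraints Proposition~\ref{prop:perturbative} imposes — which is where the parameters $\beta,\gamma$ are actually pinned down by the scaling-supercriticality $s<s_c$) so that $\delta>0$. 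Since $s>0$ and $p\geq3$, for any fixed admissible $\beta$ one can take $\gamma>0$ small enough that $s(\beta-\gamma)(p-1)>\gamma$, i.e. $\gamma< \frac{s\beta(p-1)}{1+s(p-1)}$; one then sets $c:=\delta/2>0$ (say), so that $\kappa_{n_k}(\lambda_{n_k}^2 t_{n_k})^s\geq (\log n_k)^{2c}$, and for $k$ large $c\kappa_{n_k}(\lambda_{n_k}^2 t_{n_k})^s - C\geq c(\log n_k)^c$ after shrinking $c$ once more to absorb the constants $c$ (from Lemma~\ref{lem:growth_vn}) and the additive $C$. This is just bookkeeping: $\log n_k=a^k\to\infty$, so any positive power beats the constant.

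The only subtlety — and the one point requiring care rather than calculation — is checking that the parameter window needed here, $\gamma<\frac{s\beta(p-1)}{1+s(p-1)}$ together with $0<\gamma<\beta<1$, is compatible with the constraints on $(\beta,\gamma)$ coming from the proof of Proposition~\ref{prop:perturbative} (those constraints, which encode the scaling-supercritical condition $s<s_c$, are what force $\beta$ into a definite sub-interval of $(0,1)$). Granting that compatibility, the corollary follows immediately; I do not expect any genuine obstacle beyond confirming the parameters can be chosen simultaneously, which the Proposition's proof is set up to allow precisely because $s<s_c$ leaves room. I would phrase the final line as: fix such $\beta,\gamma$, set $c>0$ small enough, and conclude $\norm{u^{\epsilon_k}(t_{n_k})}_{H^s}\geq c\log(n_k)^c$ for all $k$ large enough.
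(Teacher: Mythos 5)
Your argument is correct and matches the paper's proof exactly: reverse triangle inequality, then the lower bound from Lemma~\ref{lem:growth_vn}(1) combined with the upper bound~\eqref{eq:pert} from Proposition~\ref{prop:perturbative}, reducing everything to making the exponent $s(\beta-\gamma)(p-1)-\gamma$ positive by choosing $\gamma$ small relative to $\beta$. Your extra remark about checking compatibility with the constraints $(p-1)\beta<\tfrac12$, $\gamma<\tfrac12\beta$ fixed at the end of the proof of Proposition~\ref{prop:perturbative} is a useful precision; since all constraints are just upper bounds on $\gamma$ (and on $\beta$), they are simultaneously satisfiable.
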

Corollary~\ref{cor:inflation}, from which we deduce Theorem~\ref{thm-NLS}, follows from a priori estimate~\eqref{eq:pert} in the small-dispersion analysis, combined with the lower bound on $v_n^{\epsilon_k}$ from Lemma~\ref{lem:growth_vn}
\[
\norm{u^{\epsilon_k}(t_{n_k})}_{H^s} \geq c\log(n)^{s(\beta-\gamma)(p-1)-\gamma} - C\,.
\]
To prove Proposition~\ref{prop:perturbative}, we perform a standard perturbative analysis inspired from~\cite{BurqGerardTzvetkov2005multilinear} and~\cite{SunTzvetkov2020pathological}, using semi-classical energy estimates. The main difference is the presence of a linear correction, 
\[u_L^{\epsilon_k} :=\e^{it\Delta}u_0^{\epsilon_k}+\sum_{l=k_0}^{k-1}\e^{it\Delta}v_{0,l}^{\epsilon_k}\,.
\]
This correction ensures the smallness of the $L^2$-norm of $w(0)$. The correction is quite harmless up to time $t_{n_k}$, since $u_L^{\epsilon_k}$ concentrates at scale at most $(n_{k-1})^{-1}$, which is much larger than the scale $(n_k)^{-1}$ of the inflating bubble provided $a$ is chosen large enough in the construction. 

\begin{rk}
We remove the linear evolution of the large bubbles in $u_L^{\epsilon_k}$, in order to have an initial upper bound for the mass of order at most $n_k^{-s}$ initially. Indeed, as a consequence of Lemma~\ref{lem:scale} we may have 
\[
\norm{\sum_{l=k_0}^{k-1}v_{0,l}^{\epsilon_k}}_{L^2} \sim 1\,.
\]
Removing this contribution would deteriorate a bit the estimates on the Sobolev norms $H^m$ of $u_L^{\epsilon_k}$, that will be of size $n_{k-1}^m$ instead of~$C$. Nevertheless, since we run the argument up to time $t_{n_k}\ll n_{k-1}^m$ we shall be able to close the estimates as explained above.
\end{rk}

\begin{rk}
\label{rem:lower-bound}
In the case of the wave equation~\cite{SunTzvetkov2020pathological}, the norm inflation occurs much later, at time $t_{n_k}(NLW)\sim \lambda_{n_k}^{-1}$, since it occurs at time $t_{n_k}(NLS)\sim \lambda_{n_k}^{-2}$ for~\eqref{eq:NLS}, respectively. Consequently, a lower bound on $s$ is needed in~\cite{SunTzvetkov2020pathological} to control the growth of some source terms (containing $u_L^{\epsilon_k}$) up to time $t_{n_k}(NLW)\gg t_{n_k}(NLS)$. Nevertheless, it should be possible to remove this condition by using a refined nonlinear correction instead of $u_L^{\epsilon_k}$ in the perturbative analysis.
\end{rk}
\begin{proof}[Proof of Proposition~\ref{prop:perturbative}]
 By using the local Cauchy theory for~\eqref{eq:NLS} in $H^\sigma$ for $\sigma>\frac{3}{2}$, we see that a lower bound on the time of existence of $u^\epsilon_k$ in say $H^2$ follows from an a priori bound on its $H^2$-norm. We deduce such an a priori bound from  energy estimates. To do so, we write 
\[
w := u^{\epsilon_k} - u_L^{\epsilon_k} -v_n^{\epsilon_k}\,.
\]
Using the definition of the linear correction $u_L^{\epsilon_k}$, we see that initially,
\begin{equation}
    \label{eq:w0}
    w(0) = \sum_{l=k+1}^\infty v_{0,l}\ast\rho_{\epsilon_k}\,.
\end{equation}
We shorten the notation by denoting $n:=n_k$, and we consider the following semi-classical energy
\[
E_n(t) = \parentbig{n^{2s}\norm{w}_{L^2}^2 + n^{2(s-2)}\norm{w}_{H^2}^2}^\frac{1}{2}\,. 
\]
From the Gagliardo-Nirenberg inequality and the interpolation, we have that for $\sigma\in\intervalcc{0}{2}$,
\begin{equation}
    \label{eq:GN-E}
    \norm{w(t)}_{L^\infty}\lesssim n^{\frac{3}{2}-s}E_n(t)\,,\quad \norm{w(t)}_{H^\sigma} \lesssim n^{\sigma-s}E_n(t)\,.
\end{equation}
In particular, 
\(
\norm{w(t)}_{H^s} \lesssim E_n(t)\,.
\)
Moreover, the $H^s$-norm of the linear correction is bounded uniformly in $n$ since
\begin{equation}\label{eq:uL_Hs}
    \begin{split}
        \norm{u_L^{\varepsilon_k}}_{L_t^\infty H_x^s} 
        \leq \norm{u_0^{\varepsilon_k}}_{H^s} + \sum_{l=k_0}^{k-1}\norm{v_{0,l}^{\epsilon_k}}_{L^2}^{1-s}\norm{v_{0,l}^{\epsilon_k}}_{H^1}^s 
        \lesssim \norm{u_0}_{H^s} + \sum_{k=k_0}^{k-1}\kappa_{l}
        \lesssim 1\,.
    \end{split}
\end{equation}
For these reasons, inequality~\eqref{eq:pert} can be reduced to prove the a priori energy bound
\begin{equation*}
    \underset{0\leq t\leq t_n}{\sup} E_n(t) \leq 1\,.
\end{equation*}

Initially, we have from~\eqref{eq:L2-k+1} and~\eqref{eq:Hm-k+1} that
\[
\norm{w(0)}_{L^2}\leq C n_{k+1}^{-s}\parent{\frac{n}{n_{k+1}}}^\frac{3}{2}\,,\quad \norm{w(0)}_{H^2} \leq C n^2n_{k+1}^{-s}\parent{\frac{n}{n_{k+1}}}^\frac{3}{2}\,,
\]
so that
\begin{equation}\label{ineq:E0}
E_n(0)^2 = n^{2s}\norm{w(0)}_{L^2}^2+n^{-2(s-2)}\norm{\Delta w(0)}_{L^2}^2 \leq 2C\parent{\frac{n}{n_{k+1}}}^{3+2s} = Cn^{-c_0}\,,
\end{equation}
with $c_0:=(a-1)(3+2s)$. Now, we run a bootstrap argument to prove that $E_n(t)<1$ for all $t\leq t_n$. For shortness of notation, we drop the complex conjugation sign. By construction, $w$ is solution to the perturbed Schrödinger equation
\begin{equation*}
    (i\partial_t+\Delta)w = \abs{w+u_L^{\epsilon_k}+v_n^{\epsilon_k}}^{p-1}(w+u_L^{\epsilon_k}+v_n^{\epsilon_k}) - \abs{v_n^{\epsilon_k}}^{p-1}v_n^{\epsilon_k} + \Delta v_n^{\epsilon_k} =: F + \Delta v_n^{\epsilon_k}\,,
\end{equation*}
with initial condition $w(0)$ given by~\eqref{eq:w0}, and where the forcing term is
\[
F = \parent{w+u_L^{\epsilon_k}}\bigo{\abs{w}^{p-1}+\abs{u_L^{\epsilon_k}}^{p-1}+\abs{v_n^{\epsilon_k}}^{p-1}}\,.
\]
Denoting $\Lambda := \Delta F$, we have
\begin{equation*}
    \begin{split}
        \abs{\Lambda} \lesssim &(\abs{\Delta w}+\abs{\Delta u_L^{\epsilon_k}})\parent{\abs{w}^{p-1}+\abs{u_L^{\epsilon_k}}^{p-1}+\abs{v_n^{\epsilon_k}}^{p-1}} \\
        &+ (\abs{\nabla w}+\abs{\nabla u_L^{\epsilon_k}})^2\parent{\abs{w}^{p-2}+\abs{u_L^{\epsilon_k}}^{p-2}+\abs{v_n^{\epsilon_k}}^{p-2}} \\
        &+ (\abs{\nabla w}+\abs{\nabla u_L^{\epsilon_k}})\abs{\nabla v_n^{\epsilon_k}}\parent{\abs{w}^{p-2}+\abs{u_L^{\epsilon_k}}^{p-2}+\abs{v_n^{\epsilon_k}}^{p-2}}\\
        &+(\abs{w|+|u_L^{\epsilon_k}})\abs{\nabla^2 v_n^{\epsilon_k}}\parent{\abs{w|^{p-2}+|u_L^{\epsilon_k}}^{p-2}+\abs{v_n^{\epsilon_k}}^{p-2}} \\
        &+(\abs{w|+|u_L^{\epsilon_k}})\abs{\nabla v_n^{\epsilon_k}}^2\parent{\abs{w|^{p-3}+|u_L^{\epsilon_k}}^{p-3}+\abs{v_n^{\epsilon_k}}^{p-3}}\,.
    \end{split}
\end{equation*}

First, we control the source term coming from derivatives of $\Delta v_n^{\epsilon_k}$. We observe that $2<(\frac{3}{2}-s)(p-1)$ when $s<s_c$.~\footnote{This seems to be the only place where we see the condition $s<s_c$.} Hence, we deduce from Lemma~\ref{lem:growth_vn} that 
\begin{multline}
\label{eq:incr-source}
    n^s\norm{\Delta v_n^{\epsilon_k}}_{L_{t_n}^\infty L_x^2} + n^{s-2}\norm{\Delta^2 v_n^{\epsilon_k}}_{L_{t_n}^\infty L_x^2} \leq C(t\lambda_n^2)^4n^2 \leq C\log(n)^{4(\beta-\gamma)(p-1)}n^2 \\
    \leq Cn^{(\frac{3}{2}-s)(p-1)-\delta_0}\,,
\end{multline}
with, say,
\[
\delta_0:=\frac{1}{2}\parent{ (\frac{3}{2}-s)(p-1)- 2} >0\,.
\]

Now, we control the source terms comings from $u_L^{\epsilon_k}$ and $v_n^{\epsilon_k}$ in the expression of $F$ and $\Delta F$. First, notice from~\eqref{eq:Hm-k-1} and from the $H^s$-bound~\eqref{eq:uL_Hs}  that for any $m\geq s$, there exists $C_m>0$ such that 
\begin{equation}\label{eq:est-uL}
\norm{u_L^{\epsilon_k}}_{L_t^\infty H^m_x} \leq C_mn_{k-1}^{m-s}\,,    
\end{equation}
whereas when $m<s$ we can only use that thanks to~\eqref{eq:uL_Hs}, we have $\norm{u_L^{\epsilon_k}}_{L_t^\infty H^m_x} \leq C$. 
Using the Gagliardo-Nirenberg inequality with $s<\frac{3}{2}$, we deduce that
\begin{equation*}
\norm{u_L^{\epsilon_k}}_{L_t^\infty L^{\infty}_x}
	\lesssim (\|u_L^{\epsilon_k}\|_{L^{\infty}_tH^s_x})^{\frac{1}{2(2-s)}} (\|	u_L^{\epsilon_k}\|_{L^{\infty}_tH^2_x})^{\frac{3-2s}{2(2-s)}}
	\lesssim n_{k-1}^{\frac 32-s}\,.    
\end{equation*}
In order to prevent another unnecessary loss later on, we also observe from the Sobolev embedding $\dot{H}^{\frac 34}\hookrightarrow L^4$ that
\[
\norm{\abs{\nabla}u_L^{\epsilon_k}}_{L^{\infty}_tL^4_x} 
	\lesssim  \norm{u_L^{\epsilon_k}}_{L^{\infty}_tH^{1+\frac 34}_x} \,,
\]
and since $1+\frac 34>\frac 32>s_c$, we deduce
\[
\norm{\abs{\nabla}u_L^{\epsilon_k}}_{L^{\infty}_tL^4_x} 
	\lesssim  n_{k-1}^{1+\frac 34 -s} \,.
\]

Using the estimates on the profile $v$ from Lemma~\ref{lem:growth_vn}, 
we can therefore control the source terms $u_L^{\epsilon_k}$ and $v$ up to time $t_n\sim \lambda_n^{-2}$, uniformly in $n$ and estimate
\begin{multline*}
    n^s\norm{F(t)}_{L^2} \lesssim n^s\norm{w}_{L^2}\parent{\norm{w}_{L^\infty}^{p-1} + \norm{u_L^{\epsilon_k}}_{L^\infty}^{p-1} +  \norm{v_n^{\epsilon_k}}_{L^\infty}^{p-1}} + \\
n^s\norm{u_L^{\epsilon_k}}_{L^\infty}\parent{\norm{w}_{L^2}\norm{w}_{L^\infty}^{p-2}+\norm{u_L^{\epsilon_k}}_{L^2}\norm{u_L^{\epsilon_k}}_{L^\infty}^{p-2}+\norm{v_n^{\epsilon_k}}_{L^2}\norm{v_n^{\epsilon_k}}_{L^\infty}^{p-2}}\,.
\end{multline*}
As a consequence, 
\begin{multline*}
    n^s\norm{F(t)}_{L^2}\lesssim E_n(t)\parent{ E_n(t)^{p-1}n^{(\frac{3}{2}-s)(p-1)}+ n_{k-1}^{(\frac{3}{2}-s)(p-1)}+ n^{(\frac{3}{2}-s)(p-1)}} \\
+n^sn_{k-1}^{\frac{3}{2}-s}\parent{n^{-s}E_n(t)n^{(\frac{3}{2}-s)(p-2)} + n_{k-1}^{(\frac{3}{2}-s)(p-2)} + n^{-s}n^{(\frac{3}{2}-s)(p-2)}}\,.
\end{multline*}
Supposing that $E_n(t)\ll 1$, which is the case at time $t=0$,  this leads to the estimate
\[
    n^s\norm{F(t)}_{L^2} \lesssim E_n(t)n^{(\frac{3}{2}-s)(p-1)}+n^{(\frac{3}{2}-s)(p-1)}\parent{\parent{\frac{n_{k-1}}{n}}^{\frac{3}{2}-s}+n^s\parent{\frac{n_{k-1}}{n}}^{(\frac{3}{2}-s)(p-1)}}\,.
\]
We recall that we choose $n=n_k=e^{a^k}$ for some $a\gg 1$, so that $\frac{n_{k-1}}{n} = n^{-\theta}$ with $\theta=1-\frac{1}{a}$. Therefore, there holds
\[
n^s\norm{F(t)}_{L^2} \lesssim E_n(t)n^{(\frac{3}{2}-s)(p-1)}+n^{(\frac{3}{2}-s)(p-1)}\parent{n^{-\theta(\frac{3}{2}-s)}+n^{s-\theta(\frac{3}{2}-s)(p-1)}}\,.
\]
We want to ensure that 
\begin{equation}
    \label{eq:c}
    s< \theta\left(\frac{3}{2}-s\right)(p-1)\,,
\end{equation}
so that there exist $C>0$ and $c>0$ such that 
\begin{equation}
    \label{eq:incr-mass}
    n^s\norm{F(t)}_{L^2} \leq C E_n(t)n^{\left(\frac{3}{2}-s\right)(p-1)}+n^{(\frac{3}{2}-s)(p-1)-c}\,.
\end{equation}
Since $s<s_c=\frac{3}{2}-\frac{2}{p-1}$, the condition~\eqref{eq:c} reduces to 
\[
s_c < \theta\left(\frac{3}{2}-s_c\right)(p-1)\,,
\]
which is satisfied as soon as $\theta$ is sufficiently close to $1$, that is when $a$ is chosen sufficiently large so that 
\[
\frac{3}{4}<1-\frac{1}{a}\,.
\]

Similarly, we estimate the increment of the $H^2$-norm as in~\cite{BurqGerardTzvetkov2005multilinear}, controlling the terms involving $v_n^{\epsilon_k}, \nabla v_n^{\epsilon_k}, \Delta v_n^{\epsilon_k}$ or $u_L^{\epsilon_k}, \nabla u_L^{\epsilon_k}, \Delta u_L^{\epsilon_k}$ in $L^\infty$:
\begin{equation*}
\begin{split}
    &\norm{\Lambda(t)}_{L^2} 
    \lesssim  \norm{\Delta w}_{L^2}\parent{\norm{u_L^{\epsilon_k}}_{L^\infty}^{p-1}+\norm{w}_{L^\infty}^{p-1}+\norm{v_n^{\epsilon_k}}_{L^\infty}^{p-1}} \\
    &\quad \quad \quad \quad   +\norm{\Delta u_L^{\epsilon_k}}_{L^\infty}\parent{\norm{w}_{L^2}\norm{w}_{L^\infty}^{p-2}+\norm{v_n^{\epsilon_k}}_{L^2}\norm{v_n^{\epsilon_k}}_{L^\infty}^{p-2}} +\norm{\Delta u_L^{\epsilon_k}}_{L^2}\norm{u_L^{\epsilon_k}}_{L^\infty}^{p-1} \\
    &+ \norm{\abs{\nabla} w}_{L^4}^2\parent{\norm{u_L^{\epsilon_k}}_{L^\infty}^{p-2}+\norm{w}_{L^\infty}^{p-2}+\norm{v_n^{\epsilon_k}}_{L^\infty}^{p-2}} \\
    &\quad \quad \quad \quad +\norm{\abs{\nabla} u_L^{\epsilon_k}}_{L^\infty}^2\parent{\norm{w}_{L^2}\norm{w}_{L^\infty}^{p-3}+\norm{v_n^{\epsilon_k}}_{L^2}\norm{v_n^{\epsilon_k}}_{L^\infty}^{p-3}}+ \norm{ \abs{\nabla} u_L^{\epsilon_k}}_{L^4}^2\norm{u_L^{\epsilon_k}}_{L^\infty}^{p-2} \\
    &+ \left(\norm{\abs{\nabla} w}_{L^2}\norm{\abs{\nabla} v_n^{\epsilon_k}}_{L^\infty}+ \norm{\abs{\nabla} u_L^{\epsilon_k}}_{L^\infty}\norm{\abs{\nabla} v_n^{\epsilon_k}}_{L^2}\right)\parent{\norm{u_L^{\epsilon_k}}_{L^\infty}^{p-2}+\norm{w}_{L^\infty}^{p-2}+\norm{v_n^{\epsilon_k}}_{L^\infty}^{p-2}} \\
    &+ \left(\norm{w}_{L^2}\norm{\Delta v_n^{\epsilon_k}}_{L^\infty}+ \norm{u_L^{\epsilon_k}}_{L^\infty}\norm{\Delta v_n^{\epsilon_k}}_{L^2}\right)\parent{\norm{u_L^{\epsilon_k}}_{L^\infty}^{p-2}+\norm{w}_{L^\infty}^{p-2} + \norm{v_n^{\epsilon_k}}_{L^\infty}^{p-2}}\\
    &+ \left(\norm{w}_{L^2}\norm{\abs{\nabla} v_n^{\epsilon_k}}_{L^\infty}^2+\norm{u_L^{\epsilon_k}}_{L^\infty}\norm{\abs{\nabla} v_n^{\epsilon_k}}_{L^2}\norm{\abs{\nabla} v_n^{\epsilon_k}}_{L^\infty}\right) \parent{\norm{u_L^{\epsilon_k}}_{L^\infty}^{p-3}+\norm{w}_{L^\infty}^{p-3} + \norm{v_n^{\epsilon_k}}_{L^\infty}^{p-3}}\,.
\end{split}
\end{equation*}
In addition to inequality~\eqref{eq:GN-E}, we shall use the Sobolev embedding $\dot{H}^\frac{3}{4}\hookrightarrow L^4$ and interpolation to get
\[
\norm{\abs{\nabla}w(t)}_{L^4} \leq n^{1+\frac{3}{4}-s}E_n(t)\,.
\]
Using the bounds on $v_n^{\epsilon_k}$ and on $u_L^{\epsilon_k}$, we obtain 
\begin{equation*}
\begin{split}
    n^{s-2}\norm{\Lambda(t)}_{L^2} 
    &\lesssim  E_n(t)\parent{n_{k-1}^{(\frac{3}{2}-s)(p-1)}+n^{(\frac{3}{2}-s)(p-1)}(E_n(t)^{p-1}+\kappa_n^{p-1})} \\
    &\quad\quad\quad\quad +n_{k-1}^{2+\frac 32-s}n^{-2+(\frac 32 -s)(p-2)}\left(E_n(t)^{p-1}+\kappa_n^{p-1}\right) + n^{s-2}n_{k-1}^{2-s+(\frac 32 -s)(p-1)}\\
    &+n^{\frac{3}{2}-s}E_n(t)^2\parent{n_{k-1}^{(\frac{3}{2}-s)(p-2)}+n^{(\frac{3}{2}-s)(p-2)}(E_n(t)^{p-2}+\kappa_n^{p-2})} \\
    &\quad\quad\quad\quad +n_{k-1}^{2+3-2s}n^{-2+(\frac 32 -s)(p-3)}\left(E_n(t)^{p-3}+\kappa_n^{p-3}\right)
    +n^{s-2}{ n_{k-1}^{2+\frac 32-2s+(\frac 32 -s)(p-2)}}\\
   &+\kappa_n\left(n^{\frac{3}{2}-s}E_n(t)+n_{k-1}^{1+\frac 32 -s}n^{-1}\right)\parent{n_{k-1}^{(\frac{3}{2}-s)(p-2)}+n^{(\frac{3}{2}-s)(p-2)}(E_n(t)^{p-2}+\kappa_n^{p-2})} \\
    &+\kappa_n\left(n^{\frac{3}{2}-s} E_n(t)+n_{k-1}^{\frac 32 -s}\right)\parent{n_{k-1}^{(\frac{3}{2}-s)(p-2)}+n^{(\frac{3}{2}-s)(p-2)}(E_n(t)^{p-2}+\kappa_n^{p-2})} \\
    &+\kappa_n^2\left(n^{2(\frac{3}{2}-s)}E_n(t)+ n_{k-1}^{\frac{3}{2}-s} n^{\frac{3}{2}-s}\right)
    \parent{n_{k-1}^{(\frac{3}{2}-s)(p-3)}+n^{(\frac{3}{2}-s)(p-3)}(E_n(t)^{p-3}+\kappa_n^{p-3})}\,.
\end{split}
\end{equation*}
Once again, we assume that $E_n(t)\ll1$. Noting that $\kappa_n\leq 1$ and that $n_{k-1}\leq n$, we use the bounds
\[
n_{k-1}^{(\frac{3}{2}-s)(p-i)}+n^{(\frac{3}{2}-s)(p-i)}(E_n(t)^{p-i}+\kappa_n^{p-i})
	\lesssim \max(1,n^{(\frac 32 -s)(p-i)})\,,\quad i\in\set{1,2,3,\dots}\,.
\]
We then simplify the above expression by writing 
\begin{equation*}
    \begin{split}
     n^{s-2}\norm{\Lambda(t)}_{L^2}
     \lesssim & E_n(t)n^{(\frac{3}{2}-s)(p-1)}\\
     &+n_{k-1}^{2+\frac 32-s}n^{-2+(\frac 32-s)(p-2)}+n^{s-2}n_{k-1}^{2-s+(\frac 32-s)(p-1)}\\
     &+n_{k-1}^{2+3-2s}n^{-2+(\frac 32-s)(p-3)}+n^{s-2}n_{k-1}^{2-s+(\frac 32-s)(p-1)} \\
     &+n_{k-1}^{1+\frac 32 -s}n^{-1+(\frac 32 -s)(p-2)}\\
     &+n_{k-1}^{\frac 32 -s}n^{(\frac 32 -s)(p-2)}\,.
    \end{split}
\end{equation*}
The cross-terms between $n_{k-1}$ and $n$ are equal to
\[
n^{(\frac 32-s)(p-1)}\left(\left(\frac{n_{k-1}}{n}\right)^{2+\frac 32 -s}
	+\left(\frac{n_{k-1}}{n}\right)^{2-s}
	+\left(\frac{n_{k-1}}{n}\right)^{2+3 -2s}
	+\left(\frac{n_{k-1}}{n}\right)^{1+\frac 32 -s}
	+\left(\frac{n_{k-1}}{n}\right)^{\frac 32 -s}\right).
\]
Since $\frac{n_{k-1}}{n}=n^{-\theta}$, there exist $C,c>0$ such that 
\begin{equation}
    \label{eq:incr-h2}
    n^{s-2}\norm{\Lambda(t)}_{L^2}\leq C E_n(t)n^{(\frac{3}{2}-s)(p-1)} + n^{(\frac{3}{2}-s)(p-1)-c\theta}\,.
\end{equation}
Combining~\eqref{eq:incr-source},~\eqref{eq:incr-mass} and~\eqref{eq:incr-h2} with the energy estimate 
\[
\frac{\d}{\d t} \left(E_n(t)^2\right)
	\leq 2E_n(t)\parent{n^s\norm{F(t)}_{L^2}+n^{s-2}\norm{\Lambda(t)}_{L^2}}+n^s\norm{\Delta v_n^{\epsilon_k}}_{L_{t_n}^\infty L_x^2} + n^{s-2}\norm{\Delta^2 v}_{L_{t_n}^\infty L_x^2} \,,
\]
we get that there exist $C,c>0$ such that
\[
\frac{\d}{\d t} E_n(t)^2 \leq CE_n(t)^2n^{(\frac{3}{2}-s)(p-1)} + Cn^{(\frac{3}{2}-s)(p-1)-c}= C\lambda_n^2 E_n^2 + C\lambda_n^2n^{-2c}\,.
\]
It follows from the Gronwall estimate that 
\[
    E_n(t)^2\leq \parent{E_n(0)^2 + Cn^{-2c}t_n\lambda_n^2}\exp\parent{Ct_n\lambda_n^2} \\
    \leq \parent{E_n(0)^2 + Cn^{-c}}\exp\parent{C\log(n)^{(\beta-\gamma)(p-1)}}\,.
\]
We recall that from~\eqref{ineq:E0}, we have $E_n(0)^2\leq Cn^{-c_0}$ for some $c_0>0$. Finally, we fix parameters $\beta,\gamma$ small enough such that $(p-1)\beta<\frac{1}{2}$ and $\gamma < \frac{1}{2}\beta$. Then, we fix $n_0\in\N$ such that for all $n\geq n_0$, for some  constant $C>0$ encountered in the analysis, we have
\begin{equation*}
C\log(n)^{(\beta-\gamma)(p-1)} \leq \frac{\delta}{2}\log(n) + C(\delta)\,,\quad \delta:=\min(c_0,c)>0\,,
\end{equation*}
so that
\[
E_n(t)^2\lesssim_\delta n^{-\delta} n^{\frac{\delta}{2}}\lesssim_\delta n^{-\frac{\delta}{2}} \ll1\,.
\]
We conclude by a standard bootstrap argument.
\end{proof}

We proved that $S\subset\mathcal{P}$. Finally, we show that $S$ is dense in $H^s$. Then, by using this inclusion, we deduce that $\mathcal{P}$ contains a dense $G_\delta$ set as soon as~\eqref{eq:NLS} is globally well-posed in $H^2$.

\begin{prop}[Density]\label{prop:dense}
For $g\in H^2(\R^3)$, we denote $\Phi(t)g$ the maximal solution to~\eqref{eq:NLS} in $C(\intervalcc{0}{T^*};H^2(\R^3))$ with initial data $g$. The pathological set $\mathcal{P}$ defined in~\eqref{eq:patho} is dense in $H^s(\R^3)$. Moreover, in the defocusing energy-subcritical and critical cases $p\in\set{3,5}$, where the Cauchy problem associated with~\eqref{eq:NLS} is globally well-posed in $H^2(\R^3)$, $\mathcal{P}$ contains a dense $G_{\delta}$ subset of $H^s(\R^3)$.
\end{prop}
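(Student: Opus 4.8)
The plan is to deduce the statement almost entirely from what has already been established, namely the inclusion $S\subset\mathcal{P}$ (Corollary~\ref{cor:inflation}) together with the explicit structure of the set $S$.

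\textbf{Density.} First I would check that $S$ is dense in $H^s(\R^3)$, which immediately gives density of $\mathcal{P}$. By scaling one has $\norm{v_{0,k}}_{\dot H^\sigma(\R^3)}=\kappa_{n_k}n_k^{\sigma-s}\norm{\varphi}_{\dot H^\sigma}$ for every $\sigma\ge0$; since $s>0$ the $L^2$-contribution is the smallest, so $\norm{v_{0,k}}_{H^s(\R^3)}\lesssim\kappa_{n_k}=\log(n_k)^{-\gamma}=a^{-\gamma k}$. As $a>1$ and $\gamma>0$ this is the general term of a convergent geometric series, hence $\sum_{k\ge k_0}v_{0,k}$ converges in $H^s(\R^3)$ with norm tending to $0$ as $k_0\to\infty$. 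Given $g\in H^s(\R^3)$ and $\delta>0$, I would then pick $u_0\in\classeC_c^{\infty}(\R^3)$ with $\norm{g-u_0}_{H^s}<\delta/2$ and $k_0$ large enough that $\norm{\sum_{k\ge k_0}v_{0,k}}_{H^s}<\delta/2$; the function $u_0+\sum_{k\ge k_0}v_{0,k}$ then belongs to $S$ and lies within $\delta$ of $g$.

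\textbf{The $G_\delta$ statement.} Here I would use the hypothesis $p\in\set{3,5}$, defocusing, only through the fact that $\Phi(t)$ is then globally defined on $H^2(\R^3)$ and that the flow map $g\mapsto\Phi(t)g$ is continuous on $H^2(\R^3)$ for every $t\ge0$. For $N\in\N$ set
\[
A_N:=\setbig{f\in H^s(\R^3)\ :\ \exists\,(\varepsilon,t)\in\intervaloo{0}{1/N}\times\intervaloo{0}{1/N}\ \text{ with }\ \norm{\Phi(t)(\rho_\varepsilon\ast f)}_{H^s}>N}\,.
\]
The key observation is that for each fixed $\varepsilon>0$ and $t\ge0$ the map $f\mapsto\norm{\Phi(t)(\rho_\varepsilon\ast f)}_{H^s}$ is continuous on $H^s(\R^3)$: convolution by $\rho_\varepsilon$ is a bounded linear operator $H^s(\R^3)\to H^2(\R^3)$ (for fixed $\varepsilon$ the Fourier multiplier $\japbrak{\xi}^{2-s}\widehat\rho(\varepsilon\xi)$ is bounded since $\widehat\rho$ is Schwartz), $\Phi(t)$ is continuous on $H^2(\R^3)$ by global well-posedness, and $H^2(\R^3)\hookrightarrow H^s(\R^3)$ continuously; composing these yields the claimed continuity. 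Consequently each $A_N$ is open, being a union of open sets. Moreover $A_N$ is dense: every $f\in\mathcal{P}$ has $\limsup_{\varepsilon,t\to0}\norm{\Phi(t)(\rho_\varepsilon\ast f)}_{H^s}=\infty$ and therefore belongs to every $A_N$, so $S\subset\mathcal{P}\subset A_N$, and $S$ is dense by the previous step.

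\textbf{Conclusion.} The Baire category theorem in the complete metric space $H^s(\R^3)$ then shows that $\bigcap_{N\in\N}A_N$ is a dense $G_\delta$ set, and a diagonal extraction gives $\bigcap_N A_N\subset\mathcal{P}$: from $f\in\bigcap_N A_N$ one reads off pairs $(\varepsilon_N,t_N)\in\intervaloo{0}{1/N}\times\intervaloo{0}{1/N}$ along which $\norm{\Phi(t_N)(\rho_{\varepsilon_N}\ast f)}_{H^s}>N$, hence scales and times tending to $0$ along which the $H^s$-norm blows up. This proves that $\mathcal{P}$ contains a dense $G_\delta$ set (in fact $\mathcal{P}=\bigcap_N A_N$). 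I do not expect a genuine obstacle here: the argument is entirely soft once Corollary~\ref{cor:inflation} is available, and the only real ingredient is the continuity and globality of the $H^2$-flow. That is precisely why the $G_\delta$ conclusion is stated only in the defocusing cases $p\in\set{3,5}$: for $p>5$ or in the focusing regime, $\Phi(t)g$ need not be defined for all $g$ and all $t$, so neither the openness of $A_N$ nor the interpretation of $\mathcal{P}$ through a global flow is available without additional care.
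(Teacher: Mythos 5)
Your proof is correct and follows the same overall strategy as the paper (density of $S$, then Baire via a countable intersection of open dense sets), but you formulate the open sets differently in a way that is actually a little cleaner. The paper defines
\[
\mathcal{O}_{N,k}:=\set{f\in H^s(\R^3)\mid \norm{\Phi(t_{n_k})(\rho_{\varepsilon_k}\ast f)}_{H^s}>N}\,,
\]
using the specific sequences $(\varepsilon_k,t_{n_k})$ from the construction, and takes $\bigcap_N\bigcap_{k_1}\bigcup_{k\ge k_1}\mathcal{O}_{N,k}$. You instead take a union over \emph{all} pairs $(\varepsilon,t)\in\intervaloo{0}{1/N}^2$, which has two advantages: it immediately yields the exact identity $\mathcal{P}=\bigcap_N A_N$ (so $\mathcal{P}$ is literally a $G_\delta$ set, not merely contains one), and the density of $A_N$ follows at once from $S\subset\mathcal{P}\subset A_N$. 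The paper's write-up is slightly more careful in one respect you gloss over — it makes the continuity of $f\mapsto\Phi(t)(\rho_{\varepsilon}\ast f)$ quantitative via an explicit Lipschitz bound coming from Duhamel in $H^2$, whereas you invoke continuity of the $H^2$-flow abstractly; both are fine since global well-posedness in the sense of Definition~\ref{def:wp} already contains that continuity. The paper's write-up is slightly less careful in another respect — it only asserts that $\bigcup_k\mathcal{O}_{N,k}$ is nonempty, whereas Baire needs density; your version avoids this gap since density of $A_N$ is immediate. In short: same method, but your choice of open sets gives a tighter and arguably more transparent conclusion.
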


\begin{proof}
By construction, we know that
\[
\sum_{k=k_0}^{\infty}\|v_{0,k}\|_{H^s}
	\lesssim \sum_{k=k_0}^{\infty}\kappa_{n_k}\longrightarroww{k_0\to\infty}{} 0,
\]
therefore $S$ is dense in $H^s(\R^3)$.

We now assume that global well-posedness in $H^2(\R^3)$ holds, and we follow the proof of Corollary~1.2 in~\cite{SunTzvetkov2020pathological}. Specifically, we prove that $\mathcal{P}$ contains a countable intersection of nonempty open sets. Let us consider the sequences $(\varepsilon_k)$ and $(t_k)$ going to zero as in~\eqref{eq:parameters}, and define
\[
\mathcal{O}_{N,k}:=\{f\in H^s(\R^3)\mid \|\Phi(t_{n_k})(\rho_{\varepsilon_k}\ast f)\|_{H^s}>N\}\,.
\]
By definition of the $\limsup$, we have
\[
\bigcap_{N=1}^{\infty}\bigcap_{k=k_1}^{\infty}\bigcup_{l=k_1}^{\infty}\mathcal{O}_{N,k}\subset S.
\]

We already know that $\bigcup_{k=k_1}^{\infty}\mathcal{O}_{N,k}$ is nonempty since it contains for instance the lollipop $\sum_{l=k_1}^{\infty}v_{0,l}$. 
 Therefore, it remains to show that for every $N$ and $k$, $\mathcal{O}_{N,k}$ is an open set. 

Let $f_0\in \mathcal{O}_{N,k}$, $t\leq t_{n_k}$ and $\delta>0$ to be chosen later. Using the Duhamel formula and the Cauchy theory in $H^2$, we know that there exists $C_0>0$ such that for every $f\in H^s$,
\[
\|\Phi(t)(\rho_{\varepsilon_k}\ast f)-\Phi(t)(\rho_{\varepsilon_k}\ast f_0)\|_{H^2}
	\leq C_0(\|\rho_{\varepsilon_k}\ast f\|_{H^2}^{p-1}+\|\rho_{\varepsilon_k}\ast f_0\|_{H^2}^{p-1})\|\rho_{\varepsilon_k}\ast (f-f_0)\|_{H^2}\,.
\]
Using the inequality 
\[
\|\rho_{\varepsilon}\ast f\|_{H^2}\lesssim \varepsilon^{-(2-s)}\|f\|_{H^s}\,,
\]
we deduce that for any $f\in H^s$, 
\[
\|\Phi(t)(\rho_{\varepsilon_k}\ast f)-\Phi(t)(\rho_{\varepsilon_k}\ast f_0)\|_{H^2}
	\leq C_0\varepsilon_k^{-(2-s)p}(\|f\|_{H^s}^{p-1}+\|f_0\|_{H^s}^{p-1})\|f-f_0\|_{H^s}\,.
\]
Since $\|\Phi(t)(\rho_{\varepsilon_k}\ast f_0)\|_{H^s}>N$, we deduce that when $f$ is at distance less than $\delta$ from $f_0$ in $H^s$, then $\|\Phi(t)(\rho_{\varepsilon_k}\ast f)\|_{H^s}>N$ for some suitable $\delta>0$, so that $\mathcal{O}_{N,k}$ is open. 
\end{proof}

\section{Generic well-posedness}
\label{sec:proba}

This section is devoted to the proof of Theorem~\ref{theo-lwp} for the cubic Schrödinger equation, at scaling-supercritical regularity $H^s(\R^3)$, i.e.\@~with $\frac{1}{4}<s<\frac{1}{2}$.~\footnote{The lower bound on $s$ stems from the contributions of the \emph{high-high-low} and of the \emph{high-low-low} interactions in the first Duhamel iteration. Besides, by refining the recentering method in~\cite{benyi-oh-pocovnicu-2019}, the regularity threshold has been lowered to $\frac{1}{5}<s$. In~\cite{dodson-luhrmann-mendelson-19}, a different approach based on local smoothing in lateral spaces improves the regularity threshold in dimension 4, and can certainly be adapted in dimension 3. Note also that $s$ needs to be strictly grater than the threshold because of the summation over dyadic blocks.} The technics shall be more specific to the Schrödinger semigroup and its dispersive properties. Namely, we make use of Strichartz and bilinear estimates in the framework of Fourier restriction spaces, that we recall in parts~\ref{sec:prelim} and~\ref{sec:proba-lwp}. We then adapt the proof of the almost sure local well-posedness result from~\cite{benyi2015-local} to establish the convergence of the regularized solutions to the solution obtained from the random initial data regularized by convolution, as claimed in Theorem~\ref{thm-NLS}.

In what follows, we fix $s$ a scaling-supercritical exponent such that $\frac{1}{4}<s<\frac{1}{2}$, and a scaling-subcritical (or critical) Sobolev space $H^\sigma$, with $\sigma$ satisfying
\[
s_c =\frac{1}{2}< \sigma < 2s\,.
\]

\subsection{Preliminaries}\label{sec:prelim}

We first recall the basic dispersive estimate for the free Schrödinger evolution.

\begin{lem}[Strichartz estimates]Given $2\leq p,q \leq \infty$ satisfying the admissibility condition
\begin{equation}
\label{eq:str-adm}
    \frac{2}{p} + \frac{3}{q} = \frac{3}{2}\,,
\end{equation}
there exists $C>0$ such that for all $u_0\in L^2(\R^3)$\,, we have
\[
\norm{\e^{it\Delta}u_0}_{L_t^p(\R ; L_x^q(\R^3)}\leq C\norm{u_0}_{L_x^2(\R^3)}\,.
\]
\end{lem}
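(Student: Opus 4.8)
The final statement is the classical Strichartz estimate for the free Schrödinger propagator on $\mathbb{R}^3$, so the plan is to recall the standard $TT^*$ argument combined with the dispersive decay estimate. First I would record the two basic facts about the free evolution $e^{it\Delta}$: the energy (mass conservation) estimate $\norm{e^{it\Delta}u_0}_{L^2_x} = \norm{u_0}_{L^2_x}$, and the dispersive estimate $\norm{e^{it\Delta}u_0}_{L^\infty_x}\lesssim |t|^{-3/2}\norm{u_0}_{L^1_x}$, which follows from the explicit convolution kernel $e^{it\Delta}u_0(x) = (4\pi i t)^{-3/2}\int_{\mathbb{R}^3} e^{i|x-y|^2/(4t)}u_0(y)\d y$. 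Interpolating between these two bounds yields $\norm{e^{it\Delta}u_0}_{L^q_x}\lesssim |t|^{-3(1/2-1/q)}\norm{u_0}_{L^{q'}_x}$ for every $2\leq q\leq\infty$, where $q'$ is the conjugate exponent.

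The next step is the standard duality/$TT^*$ reduction. It suffices to prove the dual inequality $\norm{\int_{\mathbb{R}} e^{-it\Delta} F(t)\d t}_{L^2_x}\lesssim \norm{F}_{L^{p'}_t L^{q'}_x}$, or equivalently the bilinear form estimate $\bigl|\int\!\!\int \langle e^{i(t-\tau)\Delta} F(\tau), G(t)\rangle\,\d\tau\,\d t\bigr|\lesssim \norm{F}_{L^{p'}_t L^{q'}_x}\norm{G}_{L^{p'}_t L^{q'}_x}$. Using the pointwise-in-time dispersive bound above together with the Hardy–Littlewood–Sobolev inequality in the time variable, one needs the exponent relation $3\left(\tfrac12-\tfrac1q\right) = 1 - \tfrac{2}{p'} = \tfrac{2}{p}$, which is exactly the admissibility condition~\eqref{eq:str-adm}; the endpoint $p=2$ (i.e.\@ $q=6$ in dimension $3$) must be handled separately, e.g.\@ by the Keel–Tao argument with a dyadic decomposition of $|t-\tau|$ and bilinear interpolation, since Hardy–Littlewood–Sobolev fails at the endpoint. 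Minkowski's inequality and Hölder in $x$ then close the non-endpoint cases directly.

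The main obstacle, as usual for Strichartz estimates, is precisely the endpoint case $(p,q)=(2,6)$, where the naive $TT^*$ plus Hardy–Littlewood–Sobolev approach breaks down. Since the paper only needs these estimates as a black box for the probabilistic Cauchy theory in Section~\ref{sec:proba}, I would either invoke the classical references (Strichartz, Ginibre–Velo, Keel–Tao) for the full admissible range including the endpoint, or, if a self-contained treatment is wanted, carry out the Keel–Tao dyadic decomposition: split $\int\!\!\int = \sum_{j\in\mathbb{Z}}\int\!\!\int_{|t-\tau|\sim 2^j}$, bound each piece using the $L^2\to L^2$ and $L^1\to L^\infty$ estimates via bilinear real interpolation to get a gain $2^{-j\epsilon}$ in $L^{q'}\to L^q$ for $q$ slightly off $6$, and sum. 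For the purposes of this paper the cleanest option is simply to cite the standard literature, since no new argument is required here.
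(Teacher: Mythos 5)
Your outline of the classical $TT^*$ argument with the dispersive decay bound, Hardy--Littlewood--Sobolev for the non-endpoint pairs, and the Keel--Tao dyadic decomposition for the endpoint $(p,q)=(2,6)$, is correct and is exactly the standard proof of this lemma. The paper itself gives no proof: it records the Strichartz estimates as a black-box classical result to be used in Section~\ref{sec:proba}, which is precisely the conclusion you reach at the end, namely that citing the standard literature (Strichartz, Ginibre--Velo, Keel--Tao) is the appropriate treatment here.
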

We emphasize that the admissibility condition is imposed by the scaling invariance. A key feature of the randomization procedure is to rule out the scaling invariance, and to relax the admissibility condition, as written in Lemma~\ref{lem:prob-str}. The following bilinear estimate due to Bourgain indicates a nonlinear smoothing effect when high and low frequencies interact. Combined with the probabilistic Strichartz estimates, it is the main mechanism responsible for the smoothing effect observed in the Picard iteration from a randomized initial data.  
\begin{lem}[Bilinear estimate~\cite{Bourgain1998Bilinear}]
\label{lem:bili}
Given $N\ll M$ two dyadic integers and $u_0, v_0 \in L^2(\R^3)$ such that 
\[
\supp\,\mathcal{F}(u_0) \subset\set{\xi\in\R^3\mid\abs{\xi}\lesssim N}\,,\quad \supp\,\mathcal{F}(v_0) \subset\set{\xi\in\R^3\mid\frac{1}{2}M\leq\abs{\xi}\leq 2M}\,.
\]
We have 
\[
\norm{\parent{\e^{it\Delta}u_0}\parent{\e^{it\Delta}v_0}}_{L_{t,x}^2(\R\times\R^3)}\lesssim NM^{-\frac{1}{2}}\norm{u_0}_{L_x^2(\R^3)}\norm{v_0}_{L_x^2(\R^3)}\,.
\]
\end{lem}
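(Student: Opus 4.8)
The plan is to run the classical Fourier-space argument of Bourgain, reducing the bilinear estimate to a spherical-cap bound that encodes the curvature of the paraboloid $\tau = -\abs{\xi}^2$. Write $u(t,x) = (\e^{it\Delta}u_0)(x)$ and $v(t,x) = (\e^{it\Delta}v_0)(x)$, so that on the Fourier side $u(t,x) = c\int_{\R^3} \e^{i(x\cdot\xi - t\abs{\xi}^2)}\,\widehat{u_0}(\xi)\,\d\xi$ and likewise for $v$. Expanding the product $uv$ and taking the space-time Fourier transform in $(t,x)$, with dual variables $(\tau,\zeta)$, yields
\[
\widehat{uv}(\tau,\zeta) = c\int_{\R^3}\widehat{u_0}(\xi)\,\widehat{v_0}(\zeta-\xi)\,\delta\bigl(\tau + \abs{\xi}^2 + \abs{\zeta-\xi}^2\bigr)\,\d\xi\,,
\]
and by Plancherel $\norm{uv}_{L^2_{t,x}}^2 = c\iint_{\R\times\R^3}\abs{\widehat{uv}(\tau,\zeta)}^2\,\d\tau\,\d\zeta$, so everything reduces to bounding this last quantity.

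The integrand above is supported where $\abs{\xi}\lesssim N$ and $\abs{\zeta-\xi}\sim M$, hence where $\abs{\zeta}\sim M$. For fixed $(\tau,\zeta)$ the constraint $\tau + \abs{\xi}^2 + \abs{\zeta-\xi}^2 = 0$ cuts out a sphere $S_{\tau,\zeta} = \set{\xi : \abs{\xi-\zeta/2} = r}$, and the co-area formula turns the $\delta$ into surface measure divided by $\bigl|\nabla_\xi(\abs{\xi}^2+\abs{\zeta-\xi}^2)\bigr| = 4\abs{\xi-\zeta/2} = 4r$; on the support one has $r\sim\abs{\zeta}\sim M$. Cauchy--Schwarz on $S_{\tau,\zeta}$ then gives
\[
\abs{\widehat{uv}(\tau,\zeta)}^2 \lesssim \frac{1}{r^2}\,\sigma\bigl(S_{\tau,\zeta}\cap\set{\abs{\xi}\lesssim N}\bigr)\int_{S_{\tau,\zeta}}\abs{\widehat{u_0}(\xi)}^2\,\abs{\widehat{v_0}(\zeta-\xi)}^2\,\d\sigma(\xi)\,.
\]
I expect the heart of the matter to be the geometric claim that $S_{\tau,\zeta}\cap\set{\abs{\xi}\lesssim N}$ is a spherical cap lying inside a ball of radius $O(N)$ on a sphere of radius $r\sim M\gg N$, hence of surface measure $\lesssim N^2$; combined with $r\sim M$ this produces $\frac{1}{r^2}\,\sigma(\text{cap})\lesssim N^2M^{-2}$. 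This is exactly where the transversality of the characteristic surfaces --- visible in the size $4r\sim M$ of the relevant gradient --- creates the smoothing gain.

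It then remains to reassemble: integrating the last display over $(\tau,\zeta)$ and undoing the co-area formula (for $\zeta$ fixed, $\int_\R\int_{S_{\tau,\zeta}} f(\xi)\,\d\sigma(\xi)\,\d\tau = \int_{\R^3}4r(\xi,\zeta)f(\xi)\,\d\xi$), the triple integral becomes $\lesssim M\int_{\R^3}\abs{\widehat{u_0}(\xi)}^2\,\d\xi\int_{\R^3}\abs{\widehat{v_0}(\eta)}^2\,\d\eta$ after the change of variables $\eta = \zeta-\xi$ and Plancherel. Collecting the factors, $\norm{uv}_{L^2_{t,x}}^2 \lesssim (N^2M^{-2})\cdot M\,\norm{u_0}_{L^2}^2\norm{v_0}_{L^2}^2 = N^2M^{-1}\norm{u_0}_{L^2}^2\norm{v_0}_{L^2}^2$, which is the claimed bound after taking square roots. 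The only genuinely careful bookkeeping is checking that the implicit constants in ``$r\sim M$'' and in the cap-measure estimate are uniform over the range of $(\tau,\zeta)$ that actually contributes to the integral; this follows at once from $N\ll M$ and the triangle inequality.
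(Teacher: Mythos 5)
The paper does not prove this lemma; it is stated with a citation to Bourgain and invoked as a black box in the probabilistic well-posedness argument, so there is no internal proof to compare against. Your argument is the standard one and is correct: Plancherel converts the $L^2_{t,x}$-norm into the weighted $L^2$-norm of a surface-carried convolution, the co-area formula identifies the relevant measure as $\frac{1}{4r}\,d\sigma_{S_{\tau,\zeta}}$ with $r = |\xi - \zeta/2|$, Cauchy--Schwarz on the sphere isolates the cap measure $\sigma(S_{\tau,\zeta}\cap\{|\xi|\lesssim N\})\lesssim N^2$ (a cap of chord-radius $\lesssim N$ on a sphere of radius $\sim M$), and reversing the co-area formula together with $r\sim M$ on the support of $\widehat{u_0}(\xi)\widehat{v_0}(\zeta-\xi)$ yields $N^2 M^{-1}$ after the change of variables $\eta = \zeta - \xi$. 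The point you flag at the end --- uniformity of $r\sim M$ over the contributing $(\tau,\zeta)$ --- is indeed the only place where $N\ll M$ is used, and your triangle-inequality check ($r = |\xi - \zeta/2| = \tfrac12|\xi - (\zeta-\xi)|$ lies between $\tfrac12(M - CN)$ and $\tfrac12(M + CN)$) settles it. No gaps.
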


Let us now set up the relevant randomization procedure in the Euclidean space, and recall the key probabilistic Strichartz estimates obtained in this context. 


\subsubsection{Wiener randomization}

The \emph{Wiener decomposition} of a function in $f_0\in L^2$ is defined from a unit-scale partition of unity of the frequency space given by a function $\psi\in\classeC_c^{\infty}(\R^3)$ supported in $\intervalcc{-2}{2}^3$, such that
\[
\sum_{k\in\Z^3} \psi(\xi-k) = 1\,.
\]
Defining the corresponding unit-scale Fourier multiplier $P_{1,k}=\mathcal{F}^{-1}(\psi(\cdot-k)\mathcal{F})$ on the unit cube centered around $k$, we decompose any function in $L^2(\R^3)$ into a sum of unit-blocks
\[
f_0 = \sum_{k\in\Z^3} P_{1,k}f_0\,.
\]
The randomization procedure consists in decoupling these individual blocks by some independent Gaussian variables. Specifically, given a function\footnote{In order to have a reasonable measure, we ask for some non-degeneracy conditions on $f_0$. Namely, we want to ensure that $\supp \mu$ is dense in $H^s$, and that $\mu(H^{\sigma}\setminus H^s)=0$ for all $\sigma>s$. To do so, it is sufficient to assume that $\norm{P_{1,k}f_0}_{L^2} > 0$ for all $k\in\R^3$. We refer to Lemma B.1 in~\cite{burq-tzvetkov-2008I} and~\cite{BenyiOhPocovnicuSurvey2019} for further details.} $f_0\in H^s(\R^3)$, a probability space $(\Omega, \mathcal{A}, \mathbb{P})$ and some independent complex Gaussian variables $(g_k)_{k\in\Z^3}$, we define a probability measure $\mu$ supported on $H^s(\R^3)$ induced by the random variable 
\[
\omega\in\Omega\longmapsto f_0^\omega := \sum_{k\in\Z^3} g_k(\omega)P_{1,k}f_0\in H^s(\R^3)\quad \text{a.s.}
\]
This randomization procedure is called the \emph{Wiener randomization}.

\subsubsection{Strichartz estimates}
The main idea is to combine the unit scale Bernstein estimate with probabilistic decoupling in order to improve the integrability bounds and to relax the Strichartz admissibility condition, for a large measure set of initial data. 

\begin{mydef}
Let $M\gg1$ be a parameter. We say that an event $A=A(M)$ occurs $M$-certainly when there exist some positive constants $C,c,\theta>0$ such that 
\[
\mathbb{P}(A(M))\geq 1 - C\exp\parent{-cM^\theta}\,.
\]
\end{mydef}

\begin{lem}[Probabilistic Strichartz estimates~\cite{benyi2015-local}, Proposition 1.4]
\label{lem:prob-str}
Fix a Strichartz admissible pair $(q,r)$, with $2\leq q,r<\infty$, satisfying the Strichartz admissibility condition~\eqref{eq:str-adm}. For any $\widetilde{r}$ such that  $r\leq\widetilde{r}<\infty$, there exist $C,c>0$ such that for all $\lambda>0$,
\begin{equation}
    \label{eq:ld-str}
    \mathbb{P}\parent{\set{\omega\mid\ \norm{\japbrak{\nabla}^s\e^{it\Delta}f_0^\omega}_{L_t^q(\R; L_x^{\widetilde{r}}(\R^3))}>\lambda}} \leq C\exp\parent{-c\lambda^2\norm{f_0}_{H^s}^{-2}}\,.
\end{equation}
As a consequence, for any $\lambda>0$, the following Strichartz estimate holds $\lambda$-certainly 
\begin{equation}
\label{eq:prob-str}
\norm{\japbrak{\nabla}^s\e^{it\Delta}f_0^\omega}_{L_t^q(\R; L_x^{\widetilde{r}}(\R^3))} \leq \lambda\norm{f_0}_{H^s}\,, 
\end{equation}
so that almost surely, 
\begin{equation*}
\norm{\japbrak{\nabla}^s\e^{it\Delta}f_0^\omega}_{L_t^q(\R; L_x^{\widetilde{r}}(\R^3))} < \infty\,.
\end{equation*}
\end{lem}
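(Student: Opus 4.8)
The plan is to run the by-now standard argument for Bourgain-type randomizations: combine the Gaussian moment bound with the unit-scale Bernstein inequality and the deterministic Strichartz estimate to obtain $L^p(\Omega)$-bounds that grow like $\sqrt{p}$, and then upgrade these moment bounds into the exponential tail estimate~\eqref{eq:ld-str}. For the first step I would fix $p\geq\max(q,\widetilde{r})$ and use Minkowski's inequality (licit since $p\geq q$ and $p\geq\widetilde{r}$) to move the probabilistic norm inside the space-time norm, so that it suffices to bound $\norm{\japbrak{\nabla}^s\e^{it\Delta}f_0^\omega(x)}_{L^p_\omega}$ at a fixed $(t,x)$. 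Since $\japbrak{\nabla}^s\e^{it\Delta}f_0^\omega(x)=\sum_{k\in\Z^3}g_k(\omega)\,\japbrak{\nabla}^s\e^{it\Delta}P_{1,k}f_0(x)$ is a complex Gaussian of variance $\sum_k\abs{\japbrak{\nabla}^s\e^{it\Delta}P_{1,k}f_0(x)}^2$, its $L^p_\omega$-norm is $\lesssim\sqrt{p}$ times the square root of this variance. Reinserting this and using Minkowski once more to pull the $\ell^2_k$-sum outside the $L^q_tL^{\widetilde{r}}_x$-norm (allowed because $q,\widetilde{r}\geq2$), I am reduced to controlling $\sqrt{p}\,(\sum_k\norm{\japbrak{\nabla}^s\e^{it\Delta}P_{1,k}f_0}_{L^q_tL^{\widetilde{r}}_x}^2)^{1/2}$.

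For the second step, at every fixed $t$ the function $\japbrak{\nabla}^s\e^{it\Delta}P_{1,k}f_0$ has spatial Fourier support in a cube of side $O(1)$ centered at $k$ (the multipliers $\japbrak{\xi}^s$ and $\e^{-it\abs{\xi}^2}$ do not enlarge supports). Hence the unit-scale Bernstein inequality---implemented via a fattened projection $\widetilde{P}_{1,k}$ with $\widetilde{P}_{1,k}P_{1,k}=P_{1,k}$, whose convolution kernel is a modulate of a fixed Schwartz function and thus has Young norms uniform in $k$---gives $\norm{P_{1,k}g}_{L^{\widetilde{r}}_x}\lesssim\norm{P_{1,k}g}_{L^r_x}$ uniformly in $k$, for any $\widetilde{r}\geq r$. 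Applying this pointwise in $t$, then the deterministic Strichartz estimate for the admissible pair $(q,r)$ together with the commutation of $\japbrak{\nabla}^s$ and $\e^{it\Delta}$, and finally the almost-orthogonality of the blocks $(P_{1,k})$ in $L^2$ (Plancherel, using the finite overlap of the $\psi(\cdot-k)$), yields
\[
\parentbig{\sum_k\norm{\japbrak{\nabla}^s\e^{it\Delta}P_{1,k}f_0}_{L^q_tL^{\widetilde{r}}_x}^2}^{1/2}\lesssim\parentbig{\sum_k\norm{\japbrak{\nabla}^sP_{1,k}f_0}_{L^2}^2}^{1/2}\lesssim\norm{f_0}_{H^s}\,.
\]
Therefore $\norm{\,\norm{\japbrak{\nabla}^s\e^{it\Delta}f_0^\omega}_{L^q_tL^{\widetilde{r}}_x}\,}_{L^p_\omega}\leq C\sqrt{p}\,\norm{f_0}_{H^s}$ for all $p\geq\max(q,\widetilde{r})$, and the same holds for $2\leq p\leq\max(q,\widetilde{r})$ since $\norm{\cdot}_{L^p(\Omega)}\leq\norm{\cdot}_{L^{p_0}(\Omega)}$ on a probability space.

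For the third step, by Chebyshev's inequality, for every $p\geq2$,
\[
\mathbb{P}\parentbig{\,\norm{\japbrak{\nabla}^s\e^{it\Delta}f_0^\omega}_{L^q_tL^{\widetilde{r}}_x}>\lambda\,}\leq\lambda^{-p}\parent{C\sqrt{p}\,\norm{f_0}_{H^s}}^p\,;
\]
choosing $p\sim\lambda^2\norm{f_0}_{H^s}^{-2}$ (the bound being vacuous when $\lambda\lesssim\norm{f_0}_{H^s}$) gives exactly~\eqref{eq:ld-str}. The estimate~\eqref{eq:prob-str} is then just the reformulation of~\eqref{eq:ld-str} in the $\lambda$-certain language introduced above, and the almost-sure finiteness follows because the random variable lies in $L^p(\Omega)$ for every $p$.

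I do not expect a genuine obstacle here---the result is a standard by-product of the Wiener randomization. The three points that require a little care are: getting the \emph{direction} of Bernstein right (on a unit frequency block higher Lebesgue norms are controlled by lower ones, which is precisely what allows relaxing $r$ to $\widetilde{r}$) together with the uniformity in $k$ of its constant; keeping track of which Minkowski interchanges are legitimate, since this is what fixes the admissible ranges of exponents; and the passage from $\sqrt{p}$-growth of moments to a sub-Gaussian tail. The endpoint Strichartz estimate (needed only if $q=2$) and the Gaussian moment bound are used as black boxes.
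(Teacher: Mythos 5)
The paper states this lemma as a cited result (Bényi--Oh--Pocovnicu, Proposition 1.4) and does not reprove it, so there is no in-paper proof to compare against. Your argument is the standard one and is essentially the proof in the cited reference: Minkowski to swap $L^p_\omega$ past $L^q_t L^{\widetilde r}_x$ for $p\geq\max(q,\widetilde r)$, Gaussian moment bounds at a fixed $(t,x)$, Minkowski again (legitimate since $q,\widetilde r\geq 2$) to extract the square function, the $k$-uniform unit-scale Bernstein inequality to pass from $L^{\widetilde r}_x$ down to $L^r_x$, deterministic Strichartz for the admissible pair $(q,r)$, almost-orthogonality of the $P_{1,k}$ in $L^2$, and finally Chebyshev with $p\sim \lambda^2\|f_0\|_{H^s}^{-2}$ for the sub-Gaussian tail. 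One minor point worth making explicit: the randomization in the paper uses complex Gaussians $(g_k)$, so the fixed-$(t,x)$ sum is literally Gaussian and the Khintchine-type $\sqrt p$ moment growth is immediate; had the $(g_k)$ been merely sub-Gaussian with uniformly bounded moments (as in the original Burq--Tzvetkov setup), one would invoke the large-deviation lemma for such families instead, but the rest of the argument would be unchanged. Your handling of the constraint $p\geq 2$ via the vacuity of the bound for $\lambda\lesssim\|f_0\|_{H^s}$ is the standard and correct fix.
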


\subsection{Probabilistic local well-posedness}\label{sec:proba-lwp}

With the above refined probabilistic Strichartz estimates at hand, one can observe some nonlinear smoothing effect. Specifically, starting from an initial data $f_0^\omega\in H^s$ that satisfies~\eqref{eq:prob-str}, and  by performing a \emph{linear-nonlinear} decomposition through the ansatz
\[
u(t) = \e^{it\Delta}f_0^\omega + v\,, 
\]
we run a contraction mapping argument for $v$ in $H^\sigma$ with $\frac{1}{2}<\sigma<2s$, where $v$ is solution to a cubic Schrödinger equation with stochastic forcing terms depending on $f(t)=e^{it\Delta}f_0^{\omega}$, and zero initial condition: 
\begin{equation}
\label{eq:NLS-f}
\tag{NLS$_f$}
    \begin{cases}
    (i\partial_t+\Delta)v = \abs{v+f}^2(v+f)\,,\\
    v(0) = 0\,.
    \end{cases}
\end{equation}
We use the following \textit{refined Strichartz} norm for the linear evolution of the initial data 

\[
\norm{f}_{\widetilde{S}^s} := \norm{\japbrak{\nabla}^sf}_{L_{t,x}^4(\R\times\R^3)} + \norm{\japbrak{\nabla}^sf}_{L_{t,x}^5(\R\times\R^3)}+\norm{\japbrak{\nabla}^sf}_{L_{t,x}^\frac{30}{7}(\R\times\R^3)}\,, 
\]
It follows from Lemma~\ref{lem:prob-str} that for all $\lambda>0$, we have $\lambda$-certainly that
\[
\norm{\e^{it\Delta}f_0^\omega}_{\widetilde{S}^s} \leq \lambda\norm{f_0}_{H^s}\,,
\]
 As a consequence, the set
\[
\Sigma := \set{ f_0^\omega \mid\ \norm{\e^{it\Delta}f_0^\omega}_{\widetilde{S}^s} <\infty}\,
\]
has full $\mu$-measure. To control the source term in~\eqref{eq:NLS-f} we set
\[
Y^s := L_t^\infty(\R; H^s)\cap \widetilde{S}^s\,.
\]
 The local existence result due to Bényi, Oh and Pocovnicu for~\eqref{eq:NLS} with initial condition $f_0^\omega\in\Sigma$, that we now recall, states as follows.
\begin{thm}[Almost sure local well-posedness, see~\cite{benyi2015-local,benyi2015}]
\label{theo-as-lwp}
For all $f_0^\omega\in\Sigma$, there exists $T=T(\norm{\e^{it\Delta}f_0^\omega}_{Y^s})>0$ such that the Cauchy problem~\eqref{eq:NLS} with initial data $f_0^{\omega}$ has a unique solution 
\[
u\in \e^{it\Delta}f_0^\omega+X_T^{\sigma,b}\hookrightarrow L_t^\infty(\intervalcc{0}{T};H^s)\,,
\]
for some $b=\frac{1}{2}^+$. Moreover, $u\in C_t(\intervalcc{0}{T};H^s)$\,.
\end{thm}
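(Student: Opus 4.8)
The plan is to run the Bényi--Oh--Pocovnicu contraction argument in Fourier restriction spaces, adapted to the refined Strichartz norms introduced above. Writing $f(t) := \e^{it\Delta}f_0^\omega$, which by hypothesis lies in $Y^s = L_t^\infty(\R;H^s)\cap\widetilde{S}^s$, and using the linear--nonlinear ansatz $u = f + v$, the remainder $v$ solves \eqref{eq:NLS-f} and hence is a fixed point of
\[
\Gamma(v)(t) := -i\int_0^t \e^{i(t-\tau)\Delta}\bigl(|v+f|^2(v+f)\bigr)(\tau)\,d\tau\,.
\]
First I would set up the contraction for $\Gamma$ on a ball of $X_T^{\sigma,b}$ with $b = \tfrac12^+$ and $\tfrac12 = s_c < \sigma < 2s$, relying on the standard estimate $\|\int_0^t \e^{i(t-\tau)\Delta}F\,d\tau\|_{X_T^{\sigma,b}}\lesssim T^{\theta}\|F\|_{X_T^{\sigma,b-1}}$ for some $\theta>0$, together with the embedding $X_T^{\sigma,b}\hookrightarrow C_t([0,T];H^\sigma)$.

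Next I would expand the cubic nonlinearity $|v+f|^2(v+f)$, up to complex conjugates and numerical constants, into the terms $v^3$, $v^2f$, $vf^2$, and $f^3$. Each term containing at least one factor $v$ is handled by placing the $v$-factors in $X_T^{\sigma,b}$, estimated through the $L^4_{t,x}$ and $L^5_{t,x}$ Strichartz embeddings, and the accompanying $f$-factors in the refined norm $\widetilde{S}^s$; since $\sigma > \tfrac12$ one can always distribute the $\sigma$ derivatives by fractional Leibniz so as to retain the gain $T^{\theta}$. The genuinely hard term is the purely linear forcing $\int_0^t \e^{i(t-\tau)\Delta}|f|^2f\,d\tau$, where $f$ carries only $H^s$ regularity with $s < \tfrac12$, so a naive trilinear estimate fails. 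Here I would instead perform a Littlewood--Paley decomposition $f = \sum_N f_N$ and split the frequency interactions: in the high--high--low and high--low--low regimes I would invoke Bourgain's bilinear estimate (Lemma~\ref{lem:bili}) to gain a factor $N_{\min}N_{\max}^{-1/2}$, and combine it with the probabilistic Strichartz estimate \eqref{eq:prob-str} in the non-admissible range $\widetilde r > r$ to absorb the leftover derivative loss; summing the resulting dyadic series converges exactly because $\sigma < 2s$ and $s > \tfrac14$, which is the origin of the lower bound on $s$.

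With these multilinear estimates in hand the contraction closes on the ball $\{\,v : \|v\|_{X_T^{\sigma,b}}\le 2C\|f\|_{Y^s}^3\,\}$ provided $T = T(\|f\|_{Y^s})$ is chosen small enough, and the Lipschitz bound for $\Gamma$ on that ball yields uniqueness in this class. Finally, from $\sigma > s$ and the embedding $X_T^{\sigma,b}\hookrightarrow C_t([0,T];H^\sigma)$ we obtain $v\in C_t([0,T];H^\sigma)\subset C_t([0,T];H^s)$, and since $f\in C_t([0,T];H^s)$ it follows that $u = f+v\in C_t([0,T];H^s)$. The main obstacle is precisely the estimate for the purely linear forcing term $|f|^2f$ in $X_T^{\sigma,b-1}$: it is where the supercriticality $s < s_c$ manifests itself, and where one must carefully balance the bilinear smoothing against the probabilistic Strichartz gains, the dyadic summability over unit and dyadic blocks being the tight point that forces $s$ strictly above $\tfrac14$.
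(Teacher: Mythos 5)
Your proposal follows the same overall architecture as the paper: Bourgain's linear--nonlinear ansatz $u = \e^{it\Delta}f_0^\omega + v$, a contraction mapping for $v$ in the Fourier restriction space $X_T^{\sigma,b}$ with $\tfrac12<\sigma<2s$ and $b=\tfrac12^+$, and trilinear estimates built from the deterministic bilinear Strichartz estimate of Lemma~\ref{lem:bili} and the probabilistic Strichartz estimates of Lemma~\ref{lem:prob-str}. The difference is one of emphasis: the paper takes the trilinear estimates (Lemma~\ref{lem:trilinear}) as a black box cited from~\cite{benyi2015-local}, Proposition~3.1, and then records only the short contraction argument of Lemma~\ref{lem:eq-NLSf}; you attempt in addition to sketch a proof of the trilinear estimates themselves.

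In that sketch there is an imprecision worth flagging. You assert that the terms containing at least one factor $v$ can be handled by placing the $v$-factors in $X_T^{\sigma,b}$, the $f$-factors in $\widetilde{S}^s$, and distributing $\sigma$ derivatives by fractional Leibniz, and that ``the genuinely hard term is the purely linear forcing $|f|^2 f$.'' This is not quite accurate. The refined norm $\widetilde{S}^s$ controls only $\japbrak{\nabla}^s f$, not $\japbrak{\nabla}^\sigma f$, and since $s<\sigma$, whenever the highest frequency in $\mathcal{N}(v,v,f)$ or $\mathcal{N}(v,f,f)$ falls on an $f$-factor a naive distribution of $\sigma$ derivatives leaves an uncovered loss of $\sigma-s$ derivatives. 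The bilinear smoothing of Lemma~\ref{lem:bili}, combined with the non-admissible Strichartz exponents available through Lemma~\ref{lem:prob-str}, is therefore needed for \emph{every} interaction involving $f$, not only the $fff$ case. Your identification of $\sigma<2s$ and $s>\tfrac14$ as the constraints driving the dyadic summability is correct, but these constraints already appear at the level of the mixed terms, not exclusively in the purely linear forcing.
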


Before sketching the proof, let us comment on the \emph{Bourgain recentering method} used to obtain such a result. The idea is to reduce the Cauchy problem~\eqref{eq:NLS} with initial data $f_0^\omega$ to the Cauchy problem~\eqref{eq:NLS-f}, with a forcing term $f=\e^{it\Delta}f_0^\omega$ and with zero initial condition, at scaling-subcritical regularity $H^\sigma(\R^3)$. This is the reason why we have uniqueness for $u$ in an affine subspace of $L^\infty_t H^s_x(\R^3)$. This strategy generates strong solutions for initial data in $\Sigma$, in the sense that the flow map is continuous from the time interval of existence to a functional space embedded in $H^s$. Then, the contraction mapping argument follows from trilinear estimates in Fourier restriction spaces, required to handle the bilinear estimates from Lemma~\ref{lem:bili}. They are associated to the norm 
\[
\|f\|_{X^{\sigma,b}}=\|\langle \xi\rangle^{\sigma}\langle \tau+|\xi|^2\rangle^b\widehat{f}(\tau,\xi)\|_{L^2_{\tau,\xi}(\R\times\R^3)}\,,\ \|f\|_{X^{\sigma,b}_T}=\inf\left\{ \|F\|_{X^{\sigma,b}}  \mid F\in X^{\sigma,b}\,,\ F|_{[0,T]}=f\right\}\,.
\]

Combining the bilinear estimate from Lemma~\ref{lem:bili} with the probabilistic Strichartz estimate from lemma~\ref{lem:prob-str}, we obtain trilinear estimates based on a Littlewood-Paley analysis. Before proceeding further, let us simplify the notation. We write the cubic nonlinearity as a trilinear operator, acting on functions $\set{u_i}_{1\leq i\leq3}$ of type either $\e^{it\Delta}f_0^\omega$ or $v$. In this purpose, we define
\[
\mathcal{N}(u_1,u_2,u_3) := u_1\overline{u_2}u_3\,,\quad \mathcal{N}(u) := \mathcal{N}(u,u,u) = \abs{u}^2u\,.
\]
Once again, we drop the complex conjugation sign since it plays no role in our short time perturbative analysis, so that the above notation is invariant under permutation of the terms $u_1,u_2,u_3$. For convenience, we write the Duhamel integral term as
$
(i\partial_t-\Delta)^{-1} (F) : = \int_0^t \e^{i(t-\tau)\Delta}F(\tau)d\tau$.
\begin{lem}[Trilinear estimates, see Proposition 3.1 in~\cite{benyi2015-local,benyi2015}]
\label{lem:trilinear}
Let $\frac{1}{4}<s<\frac{1}{2}$ and $\frac{1}{2}<\sigma<2s$. There exist $C>0$, $b=\frac{1}{2}^+$ and $\theta=0^+$

 such that for all $0<T\lesssim1$, for all $f_1,f_2,f_3\in Y^s$ and $v, v_1,v_2,v_3\in X^{\sigma,b}(\intervalcc{0}{T})$, we have 

\begin{align*}
    \norm{\parent{i\partial_t-\Delta}^{-1}\mathcal{N}(v_1,v_2,v_3)}_{X_T^{\sigma,b}} &\leq CT^\theta\norm{v_1}_{X_T^{\sigma,b}}\norm{v_2}_{X_T^{\sigma,b}}\norm{v_3}_{X_T^{\sigma,b}}\,,\\
    \norm{\parent{i\partial_t-\Delta}^{-1}\mathcal{N}(f_1,f_2,f_3)}_{X_T^{\sigma,b}} &\leq CT^\theta\norm{f_1}_{Y^s}\norm{f_2}_{Y^s}\norm{f_3}_{Y^s}\,,\\
    \norm{\parent{i\partial_t-\Delta}^{-1}\mathcal{N}(v_1,v_2,f_3)}_{X_T^{\sigma,b}} &\leq CT^\theta\norm{v_1}_{X_T^{\sigma,b}}\norm{v_2}_{X_T^{\sigma,b}}\norm{f_3}_{Y^s}\,,\\
    \norm{\parent{i\partial_t-\Delta}^{-1}\mathcal{N}(v_1,f_2,f_3)}_{X_T^{\sigma,b}} &\leq CT^\theta\norm{v_1}_{X_T^{\sigma,b}}\norm{f_2}_{Y^s}\norm{f_3}_{Y^s}\,.
\end{align*}
In addition, we have the Lipschitz estimate 
\[
\norm{\parent{i\partial_t-\Delta}^{-1}\parent{\mathcal{N}\parent{v_1+f}-\mathcal{N}\parent{v_2+f}}}_{X^{\sigma,b}_T} \leq CT^\theta\norm{v_1-v_2}_{X^{\sigma,b}_T}\parent{\norm{v}_{X^{\sigma,b}_T}^2+\norm{f}_Y^2}\,.
\]
\end{lem}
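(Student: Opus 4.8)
The plan is to carry out the Fourier-restriction-space contraction of Bényi--Oh--Pocovnicu~\cite{benyi2015-local,benyi2015}. I would start from the standard Duhamel gain: for $b=\tfrac12^+$ there exist $0\le c<\tfrac12$ and $\theta=1-b-c>0$ with $\norm{(i\partial_t-\Delta)^{-1}G}_{X_T^{\sigma,b}}\lesssim T^\theta\norm{G}_{X_T^{\sigma,-c}}$; this reduces each of the five inequalities to an estimate of the relevant cubic expression in $X_T^{\sigma,-c}$ and produces the factor $T^\theta$. Next I would Littlewood--Paley decompose, writing $u_i=\sum_{N_i}P_{N_i}u_i$ and projecting the output onto frequency $N$. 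Since the Fourier support forces the two largest of $N,N_1,N_2,N_3$ to be comparable, after relabelling I may assume $N_1\ge N_2\ge N_3$, hence $N\lesssim N_1$, and then move the weight $\langle\nabla\rangle^\sigma$ onto the highest-frequency factor $P_{N_1}u_1$ by the fractional Leibniz rule. Throughout, the dual Strichartz embedding places a product of three space-time factors into $X_T^{0,-c}$, while $X_T^{\sigma,b}\hookrightarrow L_t^\infty H_x^\sigma$ and the admissible embedding $X_T^{\sigma,b}\hookrightarrow\langle\nabla\rangle^{-\sigma}L_{t,x}^{10/3}$ handle the $v$-factors.

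For the purely nonlinear term $\mathcal{N}(v_1,v_2,v_3)$ the forcing terms are absent and one recovers the classical subcritical cubic $X^{\sigma,b}$ trilinear estimate on $\R^3$, which is available precisely because $\sigma>s_c=\tfrac12$; here a bilinear $L_{t,x}^2$ refinement on the two highest frequencies produces a small power of $\min(N_2,N_3)/N_1$ that makes the dyadic sum converge, and no probabilistic input is needed. The remaining four estimates all contain at least one factor $f_j=\e^{it\Delta}f_0^\omega$, and use two further ingredients: the probabilistic Strichartz bounds of Lemma~\ref{lem:prob-str}, packaged in $\norm{\cdot}_{\widetilde{S}^s}=\norm{\langle\nabla\rangle^s\cdot}_{L_{t,x}^4\cap L_{t,x}^5\cap L_{t,x}^{30/7}}$, which give control in the \emph{non-admissible} exponents $4,5,\tfrac{30}{7}$; and Bourgain's bilinear estimate of Lemma~\ref{lem:bili}, which by the $X^{\sigma,b}$ transfer principle also applies to a high--low pair one of whose factors is a $v$.

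One then splits into two frequency regimes. When $N_1\sim N_2$ the genuinely tight case is that the two top factors are both copies of $f$: one writes $N^\sigma\lesssim N_1^{\sigma-2s}(N_1^sN_2^s)$, the prefactor $N_1^{\sigma-2s}$ is summable because $\sigma<2s$ (with a little room left over to absorb the logarithm from the output sum), each top factor absorbs $N_i^s\langle\nabla\rangle^{-s}$ and is estimated in one of the $\widetilde{S}^s$ norms, and the low factor is estimated with a frequency gain (for instance in $L_{t,x}^{10/3}$, or in $L_t^\infty L_x^{\widetilde r}$ with $\widetilde r$ large followed by Bernstein). When $N_1\gg N_2$, so $N\sim N_1$, one applies Lemma~\ref{lem:bili} to an extreme pair to gain the power $N_1^{-1/2}$ on the highest frequency, which together with $\sigma<2s$ overcomes the loss $N_1^{\sigma-s}$ coming from the fact that the top factor carries only regularity $s$; the remaining factor is controlled in the probabilistic norms, and the geometric sums over $N_2,N_3$ converge. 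The Lipschitz estimate follows by telescoping $\mathcal{N}(v_1+f)-\mathcal{N}(v_2+f)$ into three trilinear terms, each containing exactly one factor $v_1-v_2$ and two factors drawn from $\{v_1,v_2,f\}$, and applying the multilinear bounds just established with $v_1-v_2$ in the role of a generic $X^{\sigma,b}$-input.

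I expect the main obstacle to be precisely this multilinear bookkeeping in the mixed and all-$f$ cases: getting every geometric series in the three (or four) dyadic frequencies to converge requires balancing the probabilistic time-integrability gains carried by the exponents $4,5,\tfrac{30}{7}$ against the deterministic bilinear frequency gain $N_1^{-1/2}$, all at the endpoint parameters $b=\tfrac12^+$, $c=\tfrac12^-$, $\theta=0^+$, so that no endpoint Strichartz or $X^{\sigma,b}$ embedding is touched. This is where the strict inequalities $\tfrac12<\sigma<2s$ are forced, and hence where the threshold $s>\tfrac14$ enters; it is also why $\sigma$ must be taken strictly below $2s$ rather than at the endpoint, to leave room for the loss incurred in summing over the dyadic output blocks.
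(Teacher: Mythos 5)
The paper does not actually prove Lemma~\ref{lem:trilinear}: it is quoted as a citation to Proposition~3.1 of Bényi--Oh--Pocovnicu \cite{benyi2015-local,benyi2015}, with only the one-sentence hint that it follows by ``combining the bilinear estimate from Lemma~\ref{lem:bili} with the probabilistic Strichartz estimate from Lemma~\ref{lem:prob-str}\ldots based on a Littlewood--Paley analysis.'' Your proposal is a faithful high-level reconstruction of precisely that argument, in the $X^{\sigma,b}$ framework of \cite{benyi2015-local} (note that \cite{benyi2015} recasts the same estimates in $U^p/V^p$ spaces, which this paper does not use), and the main structural points are right: the Duhamel time gain $T^{\theta}$ from $b+c<1$, the convolution constraint forcing the two largest dyadic frequencies to be comparable, the use of the subcritical $X^{\sigma,b}$ trilinear estimate for the pure-$v$ case, the use of $\widetilde S^s$ together with Bourgain's bilinear refinement (transferred to $X^{0,b}$) for the mixed and pure-$f$ cases, and the identification of two $f$'s at the top frequencies as the tight configuration driving the constraint $\sigma<2s$ (hence $s>\tfrac14$). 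Two things you glossed over and should be aware of if you were to flesh this out: (i) after a symmetric relabelling you still have to track which slots are $f$ and which are $v$, so the case analysis is really indexed by the pair ``which two factors are high, and are they $f$ or $v$''—the $v$-at-top cases are easier since they carry $\sigma>s$ derivatives, but the bookkeeping has to be displayed; and (ii) the choice of the three specific exponents $4,5,\tfrac{30}{7}$ in $\widetilde S^s$ is not arbitrary and is exactly what closes the dyadic sums at the endpoints $b=\tfrac12^+$, $c=\tfrac12^-$—you acknowledge the tension but don't verify the numerology. These are the two spots where the BOP proof actually earns its keep, so an examiner would want to see them spelled out rather than alluded to. As a blind reconstruction of a cited lemma, though, the proposal is sound.
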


As a consequence of the above trilinear estimates, we have a good Cauchy theory for~\eqref{eq:NLS-f}.
\begin{lem}[Local well-posedness for the perturbed equation]
\label{lem:eq-NLSf} Let $R>0$ and assume that 
\[
\norm{f}_{Y^s}\leq R\,.
\]
For any $\frac{1}{2}<\sigma<2s$ and $b=\frac{1}{2}^+$, there exist $T=T(R)>0$ and a unique solution $v$ to~\eqref{eq:NLS-f} in $X_T^{\sigma,b}$. Moreover, 
\[
v\in C(\intervalcc{0}{T};H^\sigma)\,,\quad \text{and}\ \norm{v}_{X_T^{\sigma,b}} \leq R\,.
\]
\end{lem}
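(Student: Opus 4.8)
The plan is to run a standard Picard iteration scheme in the ball of radius $R$ of $X_T^{\sigma,b}$, using the trilinear estimates from Lemma~\ref{lem:trilinear} to control the Duhamel term, and choosing $T = T(R)$ small enough that the relevant contraction factors are strictly less than one. Concretely, define the map
\[
\Gamma(v)(t) := -i\int_0^t \e^{i(t-\tau)\Delta}\mathcal{N}(v+f)(\tau)\d\tau = \parent{i\partial_t-\Delta}^{-1}\mathcal{N}(v+f)\,,
\]
and expand $\mathcal{N}(v+f)$ by multilinearity into the eight terms $\mathcal{N}(v,v,v)$, $\mathcal{N}(v,v,f)$, $\mathcal{N}(v,f,f)$, $\mathcal{N}(f,f,f)$ (with multiplicities, but the complex conjugation plays no role as noted in the excerpt). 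Applying the four displayed trilinear estimates of Lemma~\ref{lem:trilinear} term by term, together with $\norm{f}_{Y^s}\leq R$, we get
\[
\norm{\Gamma(v)}_{X_T^{\sigma,b}} \leq C T^\theta\parent{\norm{v}_{X_T^{\sigma,b}}^3 + \norm{v}_{X_T^{\sigma,b}}^2 R + \norm{v}_{X_T^{\sigma,b}}R^2 + R^3}\,,
\]
so that on the ball $B_R := \set{v\in X_T^{\sigma,b}(\intervalcc{0}{T})\mid \norm{v}_{X_T^{\sigma,b}}\leq R}$ we obtain $\norm{\Gamma(v)}_{X_T^{\sigma,b}}\leq 4CT^\theta R^3$. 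Choosing $T$ small enough that $4CT^\theta R^2 \leq 1$ makes $\Gamma$ map $B_R$ into itself.

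The contraction property comes directly from the Lipschitz estimate in Lemma~\ref{lem:trilinear}: for $v_1,v_2\in B_R$,
\[
\norm{\Gamma(v_1)-\Gamma(v_2)}_{X_T^{\sigma,b}} \leq CT^\theta\norm{v_1-v_2}_{X_T^{\sigma,b}}\parent{\norm{v_1}_{X_T^{\sigma,b}}^2 + \norm{v_2}_{X_T^{\sigma,b}}^2 + \norm{f}_{Y^s}^2} \leq 3CT^\theta R^2\,\norm{v_1-v_2}_{X_T^{\sigma,b}}\,,
\]
so after possibly shrinking $T$ further so that $3CT^\theta R^2 \leq \tfrac12$, the map $\Gamma$ is a $\tfrac12$-contraction on the complete metric space $B_R$. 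By the Banach fixed point theorem there is a unique $v\in B_R$ with $\Gamma(v)=v$, which is the desired solution of~\eqref{eq:NLS-f} with $\norm{v}_{X_T^{\sigma,b}}\leq R$; uniqueness in all of $X_T^{\sigma,b}$ (not just in $B_R$) follows by a standard continuity/bootstrap argument on the size of the interval, using that any two solutions agree on a small initial subinterval and then propagating.

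For the continuity in time, $v\in C(\intervalcc{0}{T};H^\sigma)$, I would invoke the standard embedding $X_T^{\sigma,b}\hookrightarrow C(\intervalcc{0}{T};H^\sigma)$ valid for $b>\tfrac12$, which holds here since $b=\tfrac12^+$; since the Duhamel operator maps into $X_T^{\sigma,b}$ by the trilinear estimates and $v$ is its fixed point, $v$ inherits this continuity. Most of this is routine: the only genuine input is Lemma~\ref{lem:trilinear}, whose proof (the Littlewood–Paley analysis combining Bourgain's bilinear estimate with the probabilistic Strichartz bounds) is the real content and is cited from~\cite{benyi2015-local,benyi2015}. So the main (mild) obstacle in writing out this lemma cleanly is bookkeeping: making sure the $T^\theta$ gain is uniform across all mixed terms and that the radius of the ball and the time are chosen consistently so that both the self-mapping and contraction inequalities close simultaneously.
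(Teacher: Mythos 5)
Your proposal is correct and follows exactly the paper's approach: a Banach fixed-point argument for the map $\Gamma(v)=(i\partial_t-\Delta)^{-1}\mathcal{N}(v+f)$ on the ball $B_R\subset X_T^{\sigma,b}$, with closing and contraction both coming from the trilinear and Lipschitz estimates of Lemma~\ref{lem:trilinear} after choosing $T=T(R)$ so that $CT^\theta R^2$ is small. You are slightly more careful than the paper in spelling out uniqueness beyond $B_R$ and the embedding $X_T^{\sigma,b}\hookrightarrow C(\intervalcc{0}{T};H^\sigma)$ for $b>\tfrac12$, both of which the paper leaves implicit.
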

Theorem~\ref{theo-as-lwp} follows from Lemma~\ref{lem:eq-NLSf} applied with $f:=\e^{it\Delta}f_0^\omega$, for some initial data $f_0^\omega\in\Sigma$.
\begin{proof}[Proof of Lemma~\ref{lem:eq-NLSf}]
We show that for $T$ small enough with respect to $R$, the map
\[
\Gamma : v\in X_T^{\sigma,b} \longmapsto (i\partial_t-\Delta)^{-1}\parent{\abs{v+f}^2(v+f)}\in X_T^{\sigma,b}
\]
is a contraction on the ball
\[
B_R = \set{v\in X_T^{\sigma,b}\mid \norm{v}_{X_T^{\sigma,b}}\leq R}\,.
\]
It follows from the trilinear estimates of Lemma~\ref{lem:trilinear} that for all $v,v_1,v_2\in X_T^{\sigma,b}$, 
\begin{align*}
    \norm{\Gamma v}_{X_T^{\sigma,b}} &\leq CT^\theta\parent{\norm{v}_{X^{\sigma,b}}^3+\norm{f}_{Y^s}^3}\,,\\
    \norm{\Gamma(v_2)-\Gamma(v_1)}_{X_T^{\sigma,b}}&\leq CT^\theta \norm{v_2-v_1}_{X_T^{\sigma,b}}\parent{\norm{v_1}_{X_T^{\sigma,b}}^2+\norm{v_2}_{X_T^{\sigma,b}}^2+\norm{f}_{Y^s}^2}\,,
    \end{align*}
so that 
\begin{align*}
   \norm{\Gamma v}_{X_T^{\sigma,b}} &\leq 2CT^\theta R^3\,,\\
   \norm{\Gamma(v_2)-\Gamma(v_1)}_{X_T^{\sigma,b}}&\leq 2CT^\theta R^2 \norm{v_2-v_1}_{X_T^{\sigma,b}}\,.
\end{align*}
We conclude by choosing $T=T(R)>0$ such that $2CT^\theta R^2\leq\frac{1}{2}$.
\end{proof}
\subsection{Approximate solutions by convolution and convergence}
\label{sec:proba-convol}

Let $f_0^{\omega}\in\Sigma$, and $u= \e^{it\Delta}f_0^{\omega} + v$ be the local solution to~\eqref{eq:NLS} obtained in Theorem~\ref{theo-as-lwp}, with $
v \in X^{\sigma,b}(\intervalcc{0}{T}) \subset C(\intervalcc{0}{T}, H^\sigma)$. Let $(\epsilon_k)_{k\geq0}$ be a sequence of positive numbers that go to zero. Using the approximate identity~\eqref{eq:rho-eps}, we define a sequence of regularized initial data
\[
f_{0,k}^{\omega} := f_0^{\omega}\ast\rho_{\epsilon_k}\in H^\infty\,.
\]
Then, we denote $u_k$ the maximal solution in $H^1$ to~\eqref{eq:NLS}. Using the conservation of the coercive energy (in the defocusing case), $u_k$ is global. The purpose of this section is to prove the convergence claimed in Theorem~\ref{theo-lwp}. We set $R>0$ be such that 
\[
\norm{\e^{it\Delta}f_0^\omega}_{Y^s}\leq R
\,,\]
and $T=T(R)$ is as in the proof of Lemma~\ref{lem:eq-NLSf}, applied with $f:= \e^{it\Delta}f_0^{\omega}$.
\begin{prop}
We have the convergence 
\[
\underset{k\to\infty}{\lim} \norm{u_k-u}_{L^\infty(\intervalcc{0}{T},H^s)} = 0\,.
\]
\end{prop}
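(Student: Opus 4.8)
The plan is to run a stability argument for the perturbed equation~\eqref{eq:NLS-f} by decomposing $u_k = \e^{it\Delta}f_{0,k}^{\omega} + v_k$, where $v_k$ solves~\eqref{eq:NLS-f} with forcing term $f_k := \e^{it\Delta}f_{0,k}^{\omega} = \e^{it\Delta}(f_0^\omega \ast \rho_{\epsilon_k})$ and zero initial data. The key point is that $\e^{it\Delta}$ commutes with convolution by $\rho_{\epsilon_k}$, so $f_k = (\e^{it\Delta}f_0^\omega)\ast\rho_{\epsilon_k}$ and, because convolution with $\rho_{\epsilon_k}$ is uniformly bounded on every $L^q_x$ and commutes with $\japbrak{\nabla}^s$, we get $\norm{f_k}_{Y^s} \leq C\norm{\e^{it\Delta}f_0^\omega}_{Y^s} \leq CR$ uniformly in $k$. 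Moreover, since $\rho_{\epsilon_k}$ is an approximate identity and $\e^{it\Delta}f_0^\omega \in Y^s$, dominated convergence (or density of smooth functions) gives $\norm{f_k - f}_{Y^s} \to 0$ as $k\to\infty$. Hence the local well-posedness time $T=T(CR)$ from Lemma~\ref{lem:eq-NLSf} can be taken \emph{uniform in $k$}, and for $k$ large enough each $v_k$ exists on $\intervalcc{0}{T}$ with $\norm{v_k}_{X_T^{\sigma,b}}\leq CR$.

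**Second step:** compare $v_k$ with $v$. Both satisfy Duhamel formulas with the same trilinear nonlinearity but different forcing terms $f_k$ and $f$. Writing $v_k - v = (i\partial_t-\Delta)^{-1}\bigl(\mathcal{N}(v_k+f_k) - \mathcal{N}(v+f)\bigr)$, I expand the difference of cubics as a sum of trilinear terms, each of which has at least one factor of the form $(v_k-v)$ or $(f_k-f)$. Applying the trilinear and Lipschitz estimates of Lemma~\ref{lem:trilinear}, together with the uniform bounds $\norm{v_k}_{X_T^{\sigma,b}}, \norm{v}_{X_T^{\sigma,b}} \leq CR$ and $\norm{f_k}_{Y^s},\norm{f}_{Y^s}\leq CR$, yields
\[
\norm{v_k-v}_{X_T^{\sigma,b}} \leq C T^\theta R^2 \norm{v_k-v}_{X_T^{\sigma,b}} + C T^\theta R^2 \norm{f_k-f}_{Y^s}\,.
\]
Shrinking $T$ once more so that $CT^\theta R^2 \leq \tfrac12$ (this does not affect uniformity in $k$), the first term is absorbed, giving $\norm{v_k-v}_{X_T^{\sigma,b}} \lesssim_R \norm{f_k-f}_{Y^s} \to 0$. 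Since $X_T^{\sigma,b}\hookrightarrow C(\intervalcc{0}{T};H^\sigma) \hookrightarrow C(\intervalcc{0}{T};H^s)$ as $\sigma > \tfrac12 > s$ on a bounded domain in $x$ — or more precisely $X_T^{\sigma,b}\hookrightarrow L_t^\infty H^\sigma_x \hookrightarrow L_t^\infty H^s_x$ — we obtain $\norm{v_k-v}_{L^\infty(\intervalcc{0}{T};H^s)}\to0$.

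**Third step:** handle the linear parts. We have $\norm{\e^{it\Delta}f_{0,k}^\omega - \e^{it\Delta}f_0^\omega}_{L_t^\infty H^s_x} = \norm{f_0^\omega\ast\rho_{\epsilon_k} - f_0^\omega}_{H^s}$, which tends to zero because $f_0^\omega \in H^s$ and $(\rho_{\epsilon_k})$ is an approximate identity (convolution converges in $H^s$-norm). Combining with the previous step, $\norm{u_k - u}_{L^\infty(\intervalcc{0}{T};H^s)} \leq \norm{\e^{it\Delta}(f_{0,k}^\omega - f_0^\omega)}_{L_t^\infty H^s_x} + \norm{v_k - v}_{L^\infty(\intervalcc{0}{T};H^s)} \to 0$, which is the claimed convergence. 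Finally, one should note that $u_k$, a priori only the maximal $H^1$-solution, agrees on $\intervalcc{0}{T}$ with $\e^{it\Delta}f_{0,k}^\omega + v_k$ by the unconditional-type uniqueness in the affine space (the Duhamel formula uniquely determines the solution once one knows it lies in $\e^{it\Delta}f_{0,k}^\omega + X_T^{\sigma,b}$, and the smooth $H^1$ solution, being also in $H^\sigma$, does).

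**The main obstacle** I anticipate is the bookkeeping in the third and uniformity claims: making rigorous that the maximal $H^1$ (or $H^\infty$) solution $u_k$ really coincides on $\intervalcc{0}{T}$ with the solution produced by the fixed-point scheme in $\e^{it\Delta}f_{0,k}^\omega + X_T^{\sigma,b}$. This requires a persistence-of-regularity / uniqueness argument: the smooth solution lies in $C(\intervalcc{0}{T};H^\sigma)$ for every $\sigma$, in particular it provides an element of the affine space on which the contraction acts, and by uniqueness of the fixed point it must equal $u_k$. One also needs $T(CR)$ to genuinely bound below the existence time uniformly — which is immediate from Lemma~\ref{lem:eq-NLSf} once $\norm{f_k}_{Y^s}\leq CR$ uniformly — and that the convergence $\norm{f_k-f}_{Y^s}\to 0$ holds, which follows from continuity of translations on each $L^q_{t,x}$ appearing in $Y^s$ together with the dominated convergence argument for the $\japbrak{\nabla}^s$-weighted norms. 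None of these is deep, but they are the points where care is needed.
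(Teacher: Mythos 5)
Your proposal is correct and follows essentially the same route as the paper: linear--nonlinear decomposition $u_k = \e^{it\Delta}f_{0,k}^\omega + v_k$, convergence $\norm{f_k - f}_{Y^s}\to 0$ via dominated convergence, and the trilinear estimates of Lemma~\ref{lem:trilinear} to compare $v_k$ with $v$ in $X_T^{\sigma,b}$. The one structural difference is how you close the estimate: you first extract a uniform a priori bound $\norm{v_k}_{X_T^{\sigma,b}}\leq CR$ from Lemma~\ref{lem:eq-NLSf} (valid since convolution by a probability density commutes with $\japbrak{\nabla}^s$ and is non-expansive on $L^q$, so in fact $\norm{f_k}_{Y^s}\leq\norm{f}_{Y^s}$), and then run a single linear absorption for $v_k - v$; the paper instead runs a bootstrap directly on $w_k = v_k - v$ and never invokes a uniform bound on $v_k$. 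The two are equivalent in substance. Your remark in the third step — that one must identify the maximal $H^1$ (in fact $H^\infty$) solution $u_k$ with $\e^{it\Delta}f_{0,k}^\omega + v_k$ via persistence of regularity and uniqueness in the affine class — is a genuine point the paper leaves implicit, and your sketch of how to resolve it (the smooth solution lies in the contraction's uniqueness class) is the right one.
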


\begin{proof}
We do not have a good Cauchy theory in $H^s(\R^3)$. However, Theorem~\ref{theo-as-lwp} provides a Cauchy theory for the perturbed equation~\eqref{eq:NLS-f}, with a forcing term $f=e^{it\Delta}f_0^{\omega}$ in $Y^s$. Hence, it is relevant to run a linear-nonlinear decomposition of $u_k$ as well
\[
u_k = \e^{it\Delta}f_{0,k}^{\omega} + v_k =: f_k + v_k\,, \quad f_k=e^{it\Delta}f_{0,k}^{\omega}\,,
\]
We define
\[
\delta_k := f_k-f= \e^{it\Delta}(f_0^{\omega}\ast \rho_{\epsilon_k}-f_0^{\omega})\,,
\]
and note that the convergence
\[
\underset{k\to\infty}{\lim}\norm{\delta_k}_{L_t^\infty H^s_x} = \underset{k\to\infty}{\lim}\norm{f_0-f_0\ast\rho_{\epsilon_k}}_{H^s_x} =0\,.
\]
follows from the unitary property of the linear evolution together with the fact that the convolution commutes with the linear evolution. Then, we use the embedding $X^{\sigma,b}(\intervalcc{0}{T})\hookrightarrow L^\infty(\intervalcc{0}{T},H^\sigma)$ to reduce the proof of Theorem~\ref{theo-lwp} to the convergence of the nonlinear term $v_k$ in the Fourier restriction space. 
\begin{equation}
    \label{eq:conv-v}
\underset{k\to\infty}{\lim} \norm{v-v_k}_{X^{\sigma,b}_T} = 0\,.
\end{equation}
To achieve such a goal, we start by proving that 
\begin{equation}\label{eq:deltak}
\underset{k\to\infty}{\lim}\norm{\delta_k}_{\widetilde{S}^s}=0\,.
\end{equation}
We fix $2\leq q,r<\infty$ and $\omega\in\Omega$ such that 
\[
\norm{e^{it\Delta}f_0^{\omega}}_{L_t^qL_x^r}<\infty\,.
\]
In particular, we observe that
\begin{equation}
\label{eq:leb-t}
    \norm{e^{it\Delta}f_0^{\omega}}_{L_x^r}<\infty\,,\quad \text{\emph{Lebesgue a.e.}}\ t\in\R\,.
\end{equation}
In addition, when $t$ is such that~\eqref{eq:leb-t} holds we
\[
\norm{\delta_k(t)}_{L_x^r} \leq (1+\|\rho\|_{L^1_x})\norm{\e^{it\Delta}f_0^\omega}_{L_x^r}\,,
\]
so that the sequence of measurable functions $t\to\norm{\delta_k(t)}_{L_x^r}$ is uniformly bounded in $k$. From the assumption on $f_0^\omega$, we also have the uniform bound
\begin{equation}
    \label{eq:cvd}
    \norm{\delta_k}_{L_t^qL_x^r} \leq (1+\|\rho\|_{L^1_x})\norm{\e^{it\Delta}f_0^\omega}_{L_t^qL_x^r}\,.
\end{equation}
Next, we use the fact that the approximate identity converge in $L_x^r$, and commutes with the linear evolution $\e^{it\Delta}$, to obtain
\[
\underset{k\to\infty}{\lim}\|\delta_k(t)\|_{L^r_x}
	=\underset{k\to\infty}{\lim}\|e^{it\Delta}\parent{f_0^{\omega}-f_0^{\omega}\ast\rho_{\epsilon_k}}\|_{L^r_x} = \underset{k\to\infty}{\lim}\|e^{it\Delta}f_0^{\omega}-\parent{e^{it\Delta}f_0^{\omega}}\ast\rho_{\epsilon_k}\|_{L^r_x} = 0\,.
\]
This convergence, combined with the uniform bound~\eqref{eq:cvd}, allows us to apply the dominated convergence theorem in $L_t^q$ and to conclude that the convergence~\eqref{eq:deltak} holds.~\footnote{Note that the only features of this specific approximate identity by convolution we used is that it commutes with $\e^{it\Delta}$, and that it is uniformly bounded in $L^p$-spaces.} Next, we fix $\eta>0$ arbitrarily small, and $k_0\in\N$ such that for all $k\geq k_0\,,\ 
\norm{\delta_k}_{Y^s}\leq\eta$. We define
\[
w_k := v_k-v\in X^{\sigma,b}(\intervalcc{0}{T})\,.
\]
By using the equations for $v$ and $v_{k}$, we see that $w_{k}$ is solution to the following forced Schrödinger equation with zero initial condition
\begin{equation*}
\begin{cases}
    (i\partial_t+\Delta)w_k = \mathcal{N}(f_{k}+v_{k}) - \mathcal{N}(f+v)= \mathcal{N}(w_k+\delta_{k}+f+v) - \mathcal{N}(f+v) =: F\,.\\
    w_k(0) = 0\,,
\end{cases}
\end{equation*}
We perform a bootstrap argument and propagate the smallness of the $X^{\sigma,b}$-norm of $w_k$ to prove that for all $k\geq k_0$,
\[
\norm{w_{k}}_{X^{\sigma,b}(\intervalcc{0}{T})} \leq 2\eta\,.
\]
Up to some multiplicative universal constants, the forcing term writes 
\[
F \sim \mathcal{N}(w_{k}+\delta_{k}) + \mathcal{N}(w_{k}+\delta_{k},w_{k}+\delta_{k},f+v)+ \mathcal{N}(w_{k}+\delta_{k},  f+v , f+ v)\,.
\]
Using the trilinear estimates from Lemma~\ref{lem:trilinear}, we get the following a priori estimate
\begin{multline*}
\norm{w_{k}}_{X_T^{\sigma,b}} \leq C T^\theta \norm{w_{k}}_{X_T^{\sigma,b}}\parent{\norm{w_{k}}_{X_T^{\sigma,b}}^2+\norm{\delta_{k}}_{Y^s}^2+\norm{v}_{X_T^{\sigma,b}}^2+\norm{f}_{Y^s}^2} \\
+ CT^\theta\norm{\delta_{k}}_{Y^s}\parent{\norm{w_{k}}_{X_T^{\sigma,b}}^2+\norm{\delta_{k}}_{Y^s}^2+\norm{v}_{X_T^{\sigma,b}}^2+\norm{f}_{Y^s}^2}\,.
\end{multline*}
We finally exploit the smallness of the $Y^s$-norm of $\delta_{k}$ when $k\geq k_0$, the estimates on $\norm{v}_{X_T^{\sigma,b}}$ from Theorem~\ref{theo-as-lwp} and the a priori estimate on $\norm{w_k}_{X_T^{\sigma,b}}$ from the bootstrap hypothesis to obtain
\[
\norm{w_k}_{X_T^{\sigma,b}} \leq 2CT^\theta\norm{w_k}_{X_T^{\sigma,b}}\parent{\eta^2+R^2} + 2CT^\theta\eta\parent{\eta^2+R^2}\,.
\]
Since we chose $\eta$ arbitrarily small, and $T$ is chosen such that $2CT^\theta (1+R^2)\leq \frac{1}{2}$ we conclude that,
\[
\norm{w_k}_{X_T^{\sigma,b}} \leq \frac{1}{2}\norm{w_k}_{X_T^{\sigma,b}} + \frac{1}{2}\eta\,.
\]
This finishes the proof of the bootstrap.
\end{proof}

\bibliography{mybib.bib}{}
\bibliographystyle{alpha}

\end{document}